\documentclass{article}

\usepackage{amsmath,amssymb} %for math
\usepackage{mathrsfs} %for \mathscr
\usepackage{bm} %for bold letters \bm as vectors 

\usepackage{hyperref}
\usepackage{authblk}

\usepackage{graphicx} %for rotation of symbols, inserting graphics
\usepackage{tikz-cd} % for diagrams
\usepackage{mathtools}  %for \coloneqq, \vcentcolon \mathllap
\usepackage{braket}

\allowdisplaybreaks[2]

\usetikzlibrary{positioning,intersections,calc,decorations.markings,arrows.meta}
\def\sep{1cm}
\def\loopdiam{0.3cm}
\def\unode{0.05cm}
\tikzset{inner sep=0, minimum size=0.5cm}
\tikzset{
block/.style={
  draw, 
  rectangle, 
  minimum height=0.5cm, 
  minimum width=\sep, align=center
  }
}
\tikzset{
	->-/.style={decoration={markings,mark=at position 0.5 with
     {\arrow[xshift=2pt]{Latex[length=4pt,#1]}}},postaction={decorate}},
	-<-/.style={decoration={markings,mark=at position 0.5 with
     {\arrow[xshift=-2pt,rotate=180]{Latex[length=4pt,#1]}}},postaction={decorate}}
}

%theorem environment
\usepackage{amsthm} %for thm, def, etc
\theoremstyle{definition} %upshape letters in theorem environment
\newtheorem{dfn}{Definition}[section]
\newtheorem*{dfn*}{Definition}
\newtheorem{rmk}[dfn]{Remark}
\newtheorem{ex}[dfn]{Example}
\newtheorem*{notation*}{Notation}
\newtheorem*{open*}{Open Problems}

\theoremstyle{plain}
\newtheorem{lem}[dfn]{Lemma}
\newtheorem*{lem*}{Lemma}
\newtheorem{prop}[dfn]{Proposition}
\newtheorem*{prop*}{Proposition}
\newtheorem{thm}[dfn]{Theorem}
\newtheorem*{thm*}{Theorem}
\newtheorem{cor}[dfn]{Corollary}
\newtheorem*{cor*}{Corollary}

\newtheorem*{conj*}{Conjecture}

\mathtoolsset{showonlyrefs=true}

\numberwithin{equation}{section}
%\numberwithin{table}{section}
%\numberwithin{figure}{section}

\newenvironment{sd}{\begin{array}{c} \begin{tikzpicture}}{\end{tikzpicture} \end{array}}

%symbols used frequently
\newcommand{\N}{\mathbb{N}}
\newcommand{\Z}{\mathbb{Z}}

\newcommand{\R}{\mathbb{R}}
\newcommand{\C}{\mathbb{C}}
\newcommand{\T}{\mathbb{T}}

\renewcommand{\tilde}{\widetilde}
\newcommand{\ol}[1]{\overline{#1}}
\newcommand{\abs}[1]{{\left\lvert{#1}\right\rvert}}
\newcommand{\norm}[1]{{\left\lVert{#1}\right\rVert}}
\mathchardef\hyphen="2D
\newcommand{\quotient}[2]{     
\mathchoice{  \text{\raise1ex\hbox{$#1$}\!\Big/\!\lower1ex\hbox{$#2$}} }% \displaystyle
                  {  {#1}\,/\,{#2}  }% \textstyle
                  {  {#1}\,/\,{#2}  }% \scriptstyle
                  {  {#1}\,/\,{#2}  }% \scriptscriptstyle
}

\newcommand{\id}{\mathrm{id}}

\newcommand{\Spec}{\mathop{\mathrm{spec}}\nolimits}

\newcommand{\ran}{\mathop{\mathrm{range}}\nolimits}
\newcommand{\Tr}{\mathop{\mathrm{Tr}}\nolimits}

\newcommand{\ad}{\mathop{\mathrm{ad}}\nolimits}

\newcommand{\op}{\mathrm{op}}

\title{Algebraic connectedness and bipartiteness of quantum graphs}
\author{Junichiro Matsuda \thanks{
Department of Mathematics, Kyoto University, Kitashirakawa Oiwakecho, Sakyo-ku, Kyoto, Japan 606-8502.
Email: {\tt j.matsuda@math.kyoto-u.ac.jp}}}
\affil{Kyoto University
}
\date{}

\begin{document}

\maketitle

\begin{abstract}
Connectedness and bipartiteness are basic properties of classical graphs, 
and the purpose of this paper is to investigate the case of quantum graphs.
We introduce the notion of connectedness and bipartiteness of quantum graphs in terms of graph homomorphisms. This paper shows that regular tracial quantum graphs have the same algebraic characterization of connectedness and bipartiteness as classical graphs.
We also prove the equivalence between bipartiteness and two-colorability of quantum graphs by comparing two notions of graph homomorphisms respecting adjacency matrices or edge spaces.
In particular, all kinds of quantum two-colorability are mutually equivalent for regular connected tracial quantum graphs.
\end{abstract}

The quantum graphs are a non-commutative analogue of classical graphs and recently developed in the interactions between theories of operator algebras, quantum information, non-commutative geometry, quantum groups, etc.

Since quantum graphs as adjacency matrices were introduced by Musto, Reutter, Verdon \cite{Musto2018compositional}, 
there has been substantial activity towards clarifying the relation between the property of a quantum graph and the spectrum of the adjacency matrix.
It is classically known that the spectrum of the adjacency matrix can characterize some properties of a (regular) classical graph. Hoffman \cite{Hoffman1963polynomial} showed that a connected $d$-regular graph is bipartite if and only if $-d$ is an eigenvalue of the adjacency matrix, and it was already known in Fiedler \cite{Fiedler1973algebraic} that the connectedness of a graph is equivalent to the nonzero spectral gap of the graph Laplacian (cf.~\cite{Chung1997}). 
So it is natural to expect that quantum graphs have similar spectral characterizations, and indeed Ganesan \cite{Ganesan2023spectral} shows that such a spectral approach is valid for the chromatic numbers of quantum graphs.

Similarly to the classical case, the degree of a regular quantum graph is shown to be the spectral radius of the adjacency matrix. Thus it makes sense to consider the behavior of the spectrum in $[-d,d]$ for $d$-regular undirected quantum graphs.
In this paper, we introduce bipartiteness and connectedness for quantum graphs in terms of graph homomorphisms, and we give their spectral characterizations for regular quantum graphs.

Regarding the notion of graph homomorphisms, we compare two notions of graph homomorphisms, one is defined in this paper and compatible with adjacency matrices, and the other is defined in \cite{BGH2022quantum} 
and compatible with edge spaces.
We prove that these two notions are equivalent particularly in the case of quantum-to-classical graph homomorphism,
that is, any edge is mapped to the edges if the adjacency matrix is mapped to edges.
In the proof, string diagrams (c.f.~\cite{Vicary2011categorical, Musto2018compositional, Matsuda2022classification}) play a significant role
to deduce positivity from the symmetry of the diagram. 
As its corollary, we obtained that the local two-colorability is equivalent to bipartiteness for tracial real quantum graphs. 
Moreover, combining the results in this paper, it follows that 
all kinds of quantum two-colorability are mutually equivalent
for connected regular undirected tracial quantum graphs.

This paper is organized as follows.

In section 1, we prepare the basic terminology of quantum graphs and string diagrams referring to the preceding research and show some lemmas for later use.

In section 2, we introduce the graph gradient to show the positivity of graph Laplacian. From the positivity, we deduce the spectral bound by the degree of regular real quantum graphs. On the way, we show that quantum graphs do not admit an orientation in general.
\begin{thm*}[Proposition \ref{prop:specrad=d}]
Let $\mathcal{G}=(B,\psi,A)$ be a $d$-regular real quantum graph. The spectral radius $r(A)$ of the adjacency matrix satisfies $r(A)=d$.
\end{thm*}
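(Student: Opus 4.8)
The plan is to prove $r(A) = d$ by establishing the two inequalities $r(A) \le d$ and $r(A) \ge d$ separately, with the upper bound coming from the positivity of the graph Laplacian and the lower bound from a spectral argument involving the degree eigenvector.

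First I would use the graph gradient introduced in Section 2 to express the graph Laplacian $L = d\cdot\id - A$ (or the appropriate normalized version for the quantum setting) as a manifestly positive operator, roughly of the form $L = \nabla^* \nabla$ where $\nabla$ is the gradient map. Positivity of $L$ immediately yields $A \le d \cdot \id$, so every eigenvalue of $A$ is at most $d$. For a real (hence self-adjoint) quantum graph the adjacency matrix $A$ is self-adjoint, so its spectrum is real and the spectral radius is governed by the extreme eigenvalues; combined with an analogous lower-Laplacian argument (considering $d\cdot\id + A \ge 0$, which should follow from positivity of a companion gradient-type operator or directly from the diagrammatic symmetry of a $d$-regular real quantum graph), this pins the entire spectrum inside $[-d, d]$ and gives $r(A) \le d$.

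For the reverse inequality $r(A) \ge d$, I would exhibit $d$ itself as a point of the spectrum. In the $d$-regular case the regularity condition should say that $A$ applied to a distinguished element (the quantum analogue of the all-ones vector, namely the unit or the image of the counit/comultiplication structure on $B$) returns $d$ times that element. Concretely, $d$-regularity typically means $A \circ m^* = d\, m^*$ or $(\varepsilon \otimes \id)\circ A = d\,\varepsilon$ in string-diagram notation, so the unit $\eta$ or the vector implementing $\psi$ is an eigenvector of $A$ with eigenvalue $d$. That forces $d \in \Spec(A)$, hence $r(A) \ge |d| = d$.

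The main obstacle I expect is the lower bound $d\cdot\id + A \ge 0$: unlike $d\cdot\id - A \ge 0$, which follows cleanly from the gradient construction, establishing that $-d$ cannot be undershot requires genuinely using the real/undirected structure rather than just $d$-regularity. In the classical case this is automatic since $A$ has nonnegative entries and Perron--Frobenius applies, but noncommutatively one must instead argue diagrammatically. The plan here would be to write $d\cdot\id + A$ as a sum involving the gradient of a suitably signed or bipartite-doubled structure, or to invoke the completely positive / Schur-positive nature of the adjacency operator on the $C^*$-algebra $B$ equipped with $\psi$, deducing $\norm{A} \le d$ from the fact that $A$ is a positive map of norm equal to its value on the unit. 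Showing that the spectral radius coincides with the operator norm, which uses self-adjointness of $A$ in the GNS representation attached to $\psi$, is the technical crux and is where the tracial or regularity hypotheses enter.
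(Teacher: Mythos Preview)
Your proposal has a genuine gap stemming from a terminology confusion. In this paper, a \emph{real} quantum graph means $A^* = A$ (i.e., $A$ is $*$-preserving as a map on $B$), which is \emph{not} the same as $A = A^\dagger$ (self-adjoint). The condition $A = A^\dagger$ is only assumed for \emph{undirected} quantum graphs. So your claim ``For a real (hence self-adjoint) quantum graph the adjacency matrix $A$ is self-adjoint, so its spectrum is real'' is false under the hypotheses of the proposition: the spectrum of $A$ may well be complex. Consequently, even if you establish $\frac{A+A^\dagger}{2} \leq d\,\id$ from the Laplacian positivity (which is what Lemma~\ref{lem:specbound} actually gives), this only bounds the Hermitian part of $A$ and does not by itself pin down $|\lambda|$ for complex $\lambda \in \Spec(A)$. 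Your plan to bound the spectrum inside $[-d,d]$ therefore does not go through.

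The paper's fix is a phase trick: Lemma~\ref{lem:specbound} in fact proves the stronger statement $\frac{\theta A + \bar\theta A^\dagger}{2} \leq d\,\id$ for \emph{every} $\theta \in \T$, by rescaling the gradient. Then for any eigenvalue $\lambda$ with unit eigenvector $x$, one chooses $\theta$ so that $\theta\lambda = |\lambda|$ and computes $d \geq \frac{1}{2}\braket{x|\theta A + \bar\theta A^\dagger|x} = \Re(\theta\lambda) = |\lambda|$. This handles complex eigenvalues directly and yields $r(A) \leq d$ without ever needing $A$ to be self-adjoint or the spectrum to be real. Your lower bound $r(A) \geq d$ via $A 1_B = d 1_B$ is correct and matches the paper.
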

\begin{thm*}[{Theorem \ref{thm:norm=deg}}]
Let $\mathcal{G}=(B,\psi,A)$ be a $d$-regular quantum graph. Then the identity of the operator norm on $B(L^2(\mathcal{G}))$ and the degree
\[
\norm{A}_\op=d
\]
 holds if either of the following is satisfied:
\begin{description}
\item[$(1)$]
$\mathcal{G}$ is undirected, whence $\Spec(A)\subset [-d,d]$;
\item[$(2)$]
both $A$ and $A^\dagger$ are real;
\item[$(3)$]
$\mathcal{G}$ is real and tracial.
\end{description}
\end{thm*}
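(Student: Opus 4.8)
The plan is to prove the two bounds $\norm{A}_\op\ge d$ and $\norm{A}_\op\le d$ separately, the first being immediate in all three cases and the second carrying the content. For the lower bound, $d$-regularity furnishes an eigenvector: $A(1)=d\cdot 1$ with $1\neq 0$ gives $\norm{A}_\op\ge \norm{A(1)}_2/\norm{1}_2=d$, so Proposition \ref{prop:specrad=d} is not even needed here. Everything reduces to the upper bound $\norm{A}_\op\le d$, and I would split the argument according to whether $A$ is self-adjoint on $L^2(\mathcal{G})$.

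In case $(1)$ the graph is undirected, so $A=A^\dagger$ and hence $\norm{A}_\op=r(A)$; it then suffices to locate the spectrum. The positivity of the graph Laplacian $d\cdot\id-A\ge 0$ produced by the gradient of Section 2 gives $A\le d\cdot\id$, and the analogous signless gradient $d\cdot\id+A\ge 0$ gives $A\ge -d\cdot\id$. Thus $\Spec(A)\subset[-d,d]$, which both records the stated inclusion and yields $\norm{A}_\op=r(A)=d$.

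For the non-self-adjoint cases I would first reduce $(3)$ to $(2)$. When $\psi$ is tracial the involution $x\mapsto x^*$ is antiunitary on $L^2(\mathcal{G})$, so writing $J$ for it one has $(JTJ)^\dagger=JT^\dagger J$ for every $T$; since $A$ real means $JAJ=A$, this forces $A^\dagger=(JAJ)^\dagger=JA^\dagger J$, i.e.\ $A^\dagger$ is real as well, and $(3)$ becomes a special case of $(2)$. Under $(2)$ I would bound the norm through $\norm{A}_\op^2=\norm{A^\dagger A}_\op=r(A^\dagger A)$, valid because $A^\dagger A$ is a positive operator on $L^2(\mathcal{G})$. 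Crucially, $A^\dagger A$ is also a positive \emph{map} on $B$ — it preserves the cone of positive elements, since $A$ and $A^\dagger$ do — and $(A^\dagger A)(1)=A^\dagger(A(1))=d^2\cdot 1$. The norm identity $\norm{T}_{B\to B}=\norm{T(1)}$ for positive maps then gives $r(A^\dagger A)\le\norm{A^\dagger A}_{B\to B}=d^2$, while $1$ being an eigenvector gives the reverse inequality. Hence $r(A^\dagger A)=d^2$ and $\norm{A}_\op=d$.

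The main obstacle is exactly this cone-preservation, which is quite distinct from the automatic positivity of $A^\dagger A$ as an $L^2$-operator and is precisely what the identity $\norm{T}_{B\to B}=\norm{T(1)}$ requires. Cone-preservation of $A$ comes from the quantum-graph structure together with realness; the genuine difficulty is cone-preservation of the adjoint $A^\dagger$, and this is what the hypotheses arrange — the tracial assumption in $(3)$ makes $x\mapsto x^*$ antiunitary so that $A^\dagger$ inherits the reality, hence positivity, of $A$; case $(2)$ imposes reality of $A^\dagger$ outright; and case $(1)$ needs none of this, since $A=A^\dagger$ turns the problem into a purely spectral one handled by the Laplacian bounds. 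Isolating the three regimes is therefore not a convenience but reflects exactly where positivity of the adjoint can be guaranteed.
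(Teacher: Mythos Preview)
Your proof is correct, and your treatment of case $(2)$ is genuinely different from the paper's. The paper handles $(2)$ by a doubling construction: it embeds $A$ into the block operator
\[
\tilde A=\begin{pmatrix}0&A\\A^\dagger&0\end{pmatrix}
\]
on $B\oplus B$, checks that $(B\otimes\C^2,\psi\otimes\tau_{\C^2},\tilde A)$ is an undirected $d$-regular real quantum graph (this is where realness of both $A$ and $A^\dagger$ enters), applies case $(1)$ to get $\Vert\tilde A\Vert=d$, and reads off $\Vert A\Vert\le d$ by restricting $\tilde A$ to an off-diagonal block. Your route avoids the auxiliary graph entirely and instead exploits that $A^\dagger A$ is a composition of completely positive maps, so the Russo--Dye identity $\Vert T\Vert_{B\to B}=\Vert T(1)\Vert$ bounds the spectral radius by $d^2$. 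One small point worth making explicit: your appeal to ``the quantum-graph structure together with realness'' for cone-preservation of $A^\dagger$ implicitly uses that $A^\dagger$ is itself Schur idempotent, which follows from $(S\bullet T)^\dagger=S^\dagger\bullet T^\dagger$; only then does the paper's equivalence ``real $+$ Schur idempotent $\Leftrightarrow$ CP'' apply to $A^\dagger$.

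Each approach has its merits. The paper's doubling trick stays entirely within the spectral framework already set up and makes the role of undirectedness structurally transparent; it also generalizes cleanly to the KMS setting in Corollary~\ref{thm:KMSnorm=deg}. Your argument is shorter and more self-contained, importing only the standard Russo--Dye bound, and it highlights that the $L^2$-norm of $A$ is controlled by its behavior as a positive map on the $C^*$-algebra---a perspective the paper does not use elsewhere. For cases $(1)$ and $(3)$ the two proofs are essentially the same.
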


In section 3, we introduce our notion of graph homomorphism, connectedness, and bipartiteness and prove their algebraic characterizations by the spectrum of the adjacency matrix.
In the proof, Lemma \ref{lem:posnegdec} plays an essential role in controlling the decomposition of a self-adjoint operator into a subtraction of positive elements.
\begin{thm*}[Theorem \ref{thm:conn=algconn}, Theorem \ref{thm:connbipartite}, Theorem \ref{thm:bipartite}]
Let $\mathcal{G} = (B,\psi,A)$ be a $d$-regular undirected tracial quantum graph.
\begin{itemize}
\item 
$\mathcal{G}$ is connected if and only if
$d \in \Spec(A)$ is a simple root.
\item
$\mathcal{G}$ has a bipartite component if and only if
$-d \in \Spec(A)$.
If $d=0$, we require ${\dim B} \geq 2$.
\end{itemize}
If moreover $\mathcal{G}$ is connected, then
\begin{itemize}
\item
$\mathcal{G}$ is bipartite if and only if
$-d \in \Spec(A)$.
If $d=0$, we require ${\dim B} \geq 2$.
\end{itemize}
\end{thm*}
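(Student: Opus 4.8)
The plan is to regard $A$ as a self-adjoint operator on the finite-dimensional Hilbert space $L^2(\mathcal{G})$ and to translate each combinatorial condition into a statement about the two extremal eigenspaces $\ker(A-d)$ and $\ker(A+d)$. By Proposition \ref{prop:specrad=d} and Theorem \ref{thm:norm=deg}, the $d$-regular undirected hypothesis gives $\Spec(A)\subseteq[-d,d]$ with $d\in\Spec(A)$ always realized by the positive eigenvector $1\in B$, since $A(1)=d\cdot 1$. The tracial and regular assumptions make $\Phi:=d^{-1}A$ a unital, $\psi$-preserving completely positive map, i.e.\ a quantum channel, so the whole argument rests on Perron--Frobenius type structure theory for $\Phi$ together with the decomposition Lemma \ref{lem:posnegdec}.

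For connectedness, I would first identify $\ker(A-d)$ with the fixed-point space $B^{\Phi}=\{x\in B:\Phi(x)=x\}$. Because $\Phi$ is a unital completely positive map with faithful invariant trace $\psi$, standard structure theory shows that $B^{\Phi}$ is a $*$-subalgebra of $B$ containing $1$, and in particular it is spanned by its projections. The core step is to match the homomorphism-based definition of connectedness with the condition $B^{\Phi}=\C\,1$: a nontrivial projection fixed by $\Phi$ produces a decomposition of $\mathcal{G}$ witnessing disconnectedness, while conversely any disconnection supplies a fixed projection independent of $1$. Hence $\mathcal{G}$ is connected if and only if $\dim\ker(A-d)=1$, i.e.\ $d$ is a simple root.

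For the statements about $-d$, suppose $-d\in\Spec(A)$ and choose a self-adjoint eigenvector $h=h^{*}\in\ker(A+d)$. The Kadison--Schwarz inequality applied to $\Phi$ gives $h^{2}=\Phi(h)^{2}\leq\Phi(h^{2})$, and since $\psi$ is faithful and invariant this forces $\Phi(h^{2})=h^{2}$, so $|h|\in B^{\Phi}$. Lemma \ref{lem:posnegdec} then lets me write $h=c(p-q)$ with $p,q$ orthogonal complementary projections, and the relations $A(1)=d\cdot 1$, $A(h)=-d\,h$ give $A(p)=d\,q$ and $A(q)=d\,p$; this is exactly the adjacency-matrix form of a two-coloring, in which $A$ exchanges the two color classes. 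In the connected case $B^{\Phi}=\C\,1$, so $|h|$ is a scalar and $h$ becomes a genuine self-adjoint unitary, upgrading $p,q$ to an honest $\mathbb{Z}/2$-grading and hence to bipartiteness; for the general case the fixed-point projections from the connectedness step localize $h$ onto a single bipartite component. Conversely, a two-coloring of a component directly produces the eigenvector $p-q$ for $-d$, and the requirement $\dim B\geq 2$ when $d=0$ excludes the degenerate one-dimensional algebra in which $+d$ and $-d$ collapse to the single eigenvalue $0$.

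I expect the main obstacle to be the Perron--Frobenius step for the peripheral eigenvalue $-d$: in the noncommutative setting one must show that the extremal eigenvector $h$ is \emph{clean}, meaning that $|h|$ genuinely lies in the fixed-point algebra and that its positive and negative supports are complementary projections exchanged by $A$, rather than merely a self-adjoint solution of $A(h)=-d\,h$. This is precisely where the symmetry of the string diagrams and the control provided by Lemma \ref{lem:posnegdec} become essential, and it is also where the gap between the adjacency-matrix and edge-space notions of graph homomorphism must be bridged in order to conclude the equivalence of the two resulting forms of bipartiteness.
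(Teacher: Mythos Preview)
Your overall strategy is sound and takes a genuinely different route from the paper. Where you invoke Kadison--Schwarz and the multiplicative-domain structure of the UCP channel $\Phi=d^{-1}A$ to obtain that $B^{\Phi}$ is a $C^*$-subalgebra and that $|h|\in B^{\Phi}$ for any self-adjoint $h\in\ker(A+d)$, the paper proves the analogous facts by hand via Lemma~\ref{lem:tracialposneg}: it applies that lemma to the identity $Ah_+-Ah_-=dh_--dh_+$ (using $\psi A=d\psi$, complete positivity of $A$, and $h_+h_-=0$) to conclude $Ah_\pm=dh_\mp$ directly, and likewise uses Lemma~\ref{lem:fcncalcev} (itself proved from Lemma~\ref{lem:tracialposneg}) to show that spectral projections of a self-adjoint $d$-eigenvector are again $d$-eigenvectors. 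For the converse direction of connectedness (disconnected $\Rightarrow$ $d$ not simple), the paper does not use channel theory at all but rather the norm bound $\|A\|=d$ from Theorem~\ref{thm:norm=deg}: from $\braket{x_1|Ax_2}=0$ one writes $Ax_1=dx_1+(dx_2-Ax_2)$ orthogonally and squeezes with $\|Ax_1\|_2\le d\|x_1\|_2$. Your approach is more conceptual and buys a clean picture of the peripheral spectrum; the paper's is self-contained and avoids invoking general structure theory of CP maps.

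Two points of confusion to flag. First, Lemma~\ref{lem:posnegdec} is not what produces $h=c(p-q)$ in your argument: that comes from $|h|\in B^{\Phi}=\C1$ (connectedness) together with $h_+h_-=0$, and once you have Kadison--Schwarz you no longer need Lemma~\ref{lem:posnegdec} at all---it is precisely the paper's substitute for the channel-theoretic facts you are importing. Second, your closing remarks about ``symmetry of string diagrams'' and bridging the adjacency-matrix/edge-space gap refer to the content of Section~4 (Theorem~\ref{thm:hom-lochomequiv_tracial} and the two-colorability discussion), not to this theorem; the proofs of Theorems~\ref{thm:conn=algconn}--\ref{thm:bipartite} stay entirely within Definition~\ref{dfn:adjhom} and use no diagrammatic reasoning.
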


In section 4, we give a modified generalization of $t$-homomorphisms ($t \in \{loc,q,qa,qc,C^*,alg\}$) introduced by \cite{BGH2022quantum} in the quantum-to-classical cases
and show that it agrees with the quantum homomorphism defined by \cite{Musto2018compositional} in terms of string diagrams.
Then we prove that our graph homomorphisms and $loc$-homomorphisms coincide under some assumptions.
\begin{thm*}[Theorem \ref{thm:hom-lochomequiv_tracial}]
Let $\mathcal{G}_j$ for $j=0,1$ 
be real tracial quantum graphs such that $\mathcal{G}_1$ is Schur central.
Then $f^\op:\mathcal{G}_0 \to \mathcal{G}_1$ is a graph homomorphism
if and only if $(f,\C):\mathcal{G}_0 \overset{}{\to} \mathcal{G}_1$ is a $loc$-homomorphism.
\end{thm*}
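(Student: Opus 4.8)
The plan is to translate both notions into string-diagram identities for the single underlying map $f : B_1 \to B_0$ and then compare them. Writing $\bar A_1$ for the non-edge part of $\mathcal{G}_1$ (the data complementary to its edge space, encoded diagrammatically), the graph-homomorphism condition on $f^\op$ should unwind to an aggregate, functional-level identity stating that the transported non-edges are orthogonal to the adjacency of $\mathcal{G}_0$ --- roughly, that a single closed diagram $\Phi(Q) = 0$ vanishes, where $Q := f^\op(\bar A_1) \bullet A_0$ is the relevant Schur product and $\Phi$ is the faithful positive functional obtained from $\psi_0$ by closing the diagram. The $loc$-condition $(f,\C)$, read off from the edge-space formulation of \cite{BGH2022quantum}, is instead the stronger \emph{operator-level} identity $Q = 0$, asserting that every edge of $\mathcal{G}_0$ is carried into the edge space of $\mathcal{G}_1$. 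The first task is to record these two diagrams precisely and isolate $Q$ as the common object: $loc$ is $Q = 0$, while graph-homomorphism is $\Phi(Q) = 0$.

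With this reduction in hand, one implication is immediate. A $loc$-homomorphism is a graph homomorphism because $Q = 0$ trivially gives $\Phi(Q) = 0$; no positivity or symmetry input is needed. The content of the theorem is the converse, passing from the scalar equation $\Phi(Q) = 0$ to the operator equation $Q = 0$. The strategy is to prove that, under the standing hypotheses, $Q$ is a \emph{positive} element; then $\Phi(Q) = 0$ together with faithfulness of $\Phi$ forces $Q = 0$, which unwinds back to the $loc$-condition.

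The positivity of $Q$ is where realness, traciality, and Schur-centrality all enter, and it is the step I expect to be the main obstacle. The idea is to exhibit the diagram defining $Q$ as invariant under the reflection that implements the adjoint: because $\mathcal{G}_0$ and $\mathcal{G}_1$ are real and $\psi_0,\psi_1$ are traces, the cups, caps, and multiplication vertices are symmetric under this reflection, so $Q$ coincides with its mirror image and is manifestly self-adjoint of a definite sign (morally a square $T^\dagger T$). Schur-centrality of $\mathcal{G}_1$ is precisely what allows the edge projection of $\mathcal{G}_1$ to be commuted across the comultiplication so that the two halves of the diagram close up into such a square; without it the reflected diagram need not match the original and the sign is lost. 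Getting the bookkeeping of daggers, the trace normalization, and the placement of the edge projection mutually consistent --- so that realness and traciality genuinely produce the reflection symmetry while Schur-centrality supplies exactly the commutation needed to fold the diagram onto itself --- is the real work; once $Q \geq 0$ is established, faithfulness of $\Phi$ closes the argument.
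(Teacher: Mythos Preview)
Your high-level strategy --- express the gap between the two conditions as a positivity statement and close it by faithfulness --- is indeed what the paper does, but your reduction to a single operator $Q$ does not work as written, and the role you assign to Schur centrality is not the one actually needed.

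The first problem is the pair of claimed equivalences. The graph-homomorphism condition $A_1 \bullet (f^\dagger A_0 f) = f^\dagger A_0 f$ is an \emph{operator} equation in $B(L^2(\mathcal{G}_1))$, not a scalar one, so it does not unwind to a single closed diagram $\Phi(Q)=0$; you only get the implication in one direction. More seriously, with your $Q = (f A_1^c f^\dagger)\bullet A_0$, the vanishing $Q=0$ says only that $f A_1^c f^\dagger$ is Schur-orthogonal to $A_0$, which after moving $f$ across gives $\langle A_1^c \mid f^\dagger T f\rangle = 0$ for all $T\in\mathcal{S}_0$. But the $loc$-condition (Proposition~\ref{prop:t-homch}(3)) demands $\langle S \mid f^\dagger T f\rangle = 0$ for \emph{every} $S\in\mathcal{S}_1^\perp$, not only $S=A_1^c$. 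So $Q=0$ is strictly weaker than $loc$, and your chain graph-hom $\Rightarrow\Phi(Q)=0\Rightarrow Q=0\Rightarrow loc$ breaks at the last arrow.

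The paper does not package everything into one $Q$. It fixes $T\in\mathcal{S}_0$ and a \emph{Schur projection} $S\in\mathcal{S}_1^\perp$, and this is precisely where Schur centrality enters: by Lemma~\ref{lem:Schurcentral}, $\mathcal{S}_1^\perp$ is spanned by Schur projections, so it suffices to treat such $S$. The point of taking $S$ to be a Schur projection is the identity $S = S\bullet S$, which lets one open the scalar $0=\langle S\mid f^\dagger A_0 f\rangle$ (this scalar vanishes by the graph-homomorphism hypothesis applied to the single element $S$) into a vertically symmetric diagram involving $P_{A_0}$ sandwiched between $fSf^\dagger$ and its adjoint. Splitting $P_{A_0} = \lvert\iota(T)\rangle\langle\iota(T)\rvert + P$ then writes this zero scalar as a sum of two nonnegative terms, and the first is $\lvert\langle S\mid f^\dagger T f\rangle\rvert^2$. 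So the positivity lives at the scalar level as a sum-of-squares decomposition, not as positivity of an operator $Q$; and your ``reflection symmetry $\Rightarrow$ positive'' step would in any case only give self-adjointness, not a sign. The missing idea in your sketch is exactly this use of $S=S\bullet S$ to manufacture the square.
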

This result yields the equivalence of bipartiteness and local two-colorability,
which implies the equivalence of all the $t$-$2$ colorability.
\begin{thm*}[Theorem \ref{thm:bipartiff2col}]
Let $\mathcal{G}$ be a real tracial quantum graph. Then $\mathcal{G}$ is bipartite if and only if it is $loc$-$2$ colorable.
\end{thm*}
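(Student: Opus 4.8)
The plan is to recognize both properties as the existence of a homomorphism from $\mathcal{G}$ into one and the same target — the classical complete graph $K_2$ on two vertices — and then to invoke Theorem \ref{thm:hom-lochomequiv_tracial} to identify the two flavors of homomorphism. Concretely, by the definitions introduced earlier, $\mathcal{G}$ is bipartite exactly when there is a graph homomorphism $f^\op:\mathcal{G}\to K_2$, while $\mathcal{G}$ is $loc$-$2$ colorable exactly when there is a $loc$-homomorphism $(f,\C):\mathcal{G}\to K_2$. Thus, upon setting $\mathcal{G}_0=\mathcal{G}$ and $\mathcal{G}_1=K_2$, the desired equivalence reduces to checking that $K_2$ satisfies the standing hypotheses of Theorem \ref{thm:hom-lochomequiv_tracial}.

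The first step is therefore to verify that $K_2$ is a real, tracial, and Schur central quantum graph. Viewing $K_2$ as the quantum graph with $B=\C^2$ equipped with the uniform trace and adjacency matrix $\left(\begin{smallmatrix} 0 & 1 \\ 1 & 0\end{smallmatrix}\right)$, reality and traciality are immediate: the adjacency matrix is real symmetric, and the uniform trace on a commutative algebra is automatically tracial. Schur centrality holds because $K_2$ is a classical graph, so that $B=\C^2$ is commutative and every element lies in the center for the Schur product; this is precisely the classical instance of Schur centrality, and $\mathcal{G}$ itself is real and tracial by assumption.

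With the hypotheses in place, Theorem \ref{thm:hom-lochomequiv_tracial} applies with $\mathcal{G}_0=\mathcal{G}$ and $\mathcal{G}_1=K_2$, giving for every candidate $f$ that $f^\op:\mathcal{G}\to K_2$ is a graph homomorphism if and only if $(f,\C):\mathcal{G}\to K_2$ is a $loc$-homomorphism. Quantifying existentially over $f$ on both sides then turns this fixed-$f$ biconditional into the statement of the theorem: $\mathcal{G}$ admits a graph homomorphism to $K_2$ — i.e.\ is bipartite — precisely when it admits a $loc$-homomorphism to $K_2$ — i.e.\ is $loc$-$2$ colorable.

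The substantive content of the argument is entirely carried by Theorem \ref{thm:hom-lochomequiv_tracial}, so the only point requiring care here is definitional: one must confirm that the paper's notion of bipartiteness and the $loc$-$2$ colorability inherited from \cite{BGH2022quantum} are genuinely phrased as homomorphisms into the \emph{same} object $K_2$, with matching conventions for the direction ($f$ versus $f^\op$) and for the scalar datum $\C$ labeling a $loc$-homomorphism. Once this bookkeeping is aligned, no further spectral or positivity input is needed beyond what the bridging theorem already supplies, and the expected main obstacle — reconciling the adjacency-matrix-based and edge-space-based notions of homomorphism — has already been dispatched in the proof of Theorem \ref{thm:hom-lochomequiv_tracial}.
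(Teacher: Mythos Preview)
Your proof is essentially identical to the paper's: both reduce the statement to Theorem \ref{thm:hom-lochomequiv_tracial} applied with $\mathcal{G}_0=\mathcal{G}$ and $\mathcal{G}_1=K_2$, after verifying that $K_2$ is real, tracial, and Schur central. One small caveat: the paper's definition of bipartite requires a \emph{surjective} graph homomorphism $\mathcal{G}\to K_2$, which you drop when restating the definition; the paper's own proof also glosses over this, but strictly speaking the direction ``$loc$-$2$ colorable $\Rightarrow$ bipartite'' needs a word on why the resulting $f$ can be taken injective (a non-injective $f$ forces $A=0$ via Lemma~\ref{lem:numofedges}, and then one replaces $f$ by any embedding $\C^2\hookrightarrow B$ when $\dim B\geq 2$).
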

\begin{thm*}[Corollary \ref{cor:2colequiv}]
Let $\mathcal{G}=(B,\psi,A)$ be a connected $d$-regular undirected tracial quantum graph.
The following are equivalent:
\begin{description}
\item[(1)] $\mathcal{G}$ is $loc$-$2$ colorable;
\item[(2)] $\mathcal{G}$ is $alg$-$2$ colorable;
\item[(3)] $\mathcal{G}$ has a symmetric spectrum;
\item[(4)] $-d \in \Spec(A)$.
If $d=0$, we require ${\dim B} \geq 2$;
\item[(5)] $\mathcal{G}$ is bipartite.
\end{description}
\end{thm*}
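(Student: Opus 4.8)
The plan is to establish all five conditions by closing a single cycle of implications,
\[
(1)\Rightarrow(2)\Rightarrow(3)\Rightarrow(4)\Rightarrow(5)\Rightarrow(1),
\]
in which every arrow but $(2)\Rightarrow(3)$ is a direct consequence of results already proved. The arrow $(1)\Rightarrow(2)$ is the standard inclusion in the hierarchy of $t$-homomorphisms: a $loc$-homomorphism $\mathcal{G}\to K_2$ is in particular an $alg$-homomorphism, so $loc$-$2$ colorability implies $alg$-$2$ colorability. The arrow $(4)\Rightarrow(5)$ is exactly the connected case of Theorem \ref{thm:bipartite}, and $(5)\Rightarrow(1)$ is one direction of Theorem \ref{thm:bipartiff2col}. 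Before using the latter I would reconcile its hypotheses with ours: Theorem \ref{thm:bipartiff2col} is stated for real tracial graphs, so the proof must record that the undirected tracial $\mathcal{G}$ at hand falls within its scope (either because the graphs under consideration are real, or by the conventions fixed in Section~1).

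The arrow $(3)\Rightarrow(4)$ is elementary. Connectedness and $d$-regularity give $r(A)=d$ by Proposition \ref{prop:specrad=d}, and Theorem \ref{thm:conn=algconn} shows that $d\in\Spec(A)$ as a simple root. If the spectrum is symmetric, $\Spec(A)=-\Spec(A)$, then $-d\in\Spec(A)$, which is (4); the case $d=0$ is precisely where the hypothesis $\dim B\geq 2$ is required, matching the caveat already present in Theorem \ref{thm:bipartite}.

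The heart of the proof is the remaining arrow $(2)\Rightarrow(3)$: an $alg$-$2$ coloring must force $\Spec(A)$ to be symmetric about $0$. My approach mirrors the classical observation that conjugating the adjacency matrix of a properly $2$-colored graph by the diagonal sign of its coloring negates the matrix. An $alg$-homomorphism $\mathcal{G}\to K_2$ furnishes complementary idempotents $p_1,p_2$ with $p_1+p_2=1$ whose properness makes the diagonal blocks of $A$ vanish; writing $u=p_1-p_2$ one has $u^{-1}=u$, and expanding both $A=(p_1+p_2)A(p_1+p_2)$ and $uAu=(p_1-p_2)A(p_1-p_2)$ while cancelling the vanishing blocks $p_iAp_i=0$ yields $uAu^{-1}=-A$, so that $\Spec(A)=-\Spec(A)$. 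The main obstacle is that $alg$ is the weakest notion in the hierarchy, so a priori the coloring lives in an abstract algebra rather than on $L^2(\mathcal{G})$: the real work is to pass to a representation of the game algebra, obtain honest idempotents on $L^2(\mathcal{G})\otimes H$ conjugating $A\otimes 1$ to $-(A\otimes 1)$, and to verify via the string-diagrammatic form of $A$ from Section~1 that properness is indeed the block-vanishing relation at the operator level. I expect the tracial and regular hypotheses to be exactly what make this promotion go through; once the similarity $uAu^{-1}=-A$ is in place, the spectral symmetry is immediate and the cycle closes.
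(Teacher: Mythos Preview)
Your cycle of implications matches the paper's exactly, and your idea for $(2)\Rightarrow(3)$---conjugating $A\otimes 1$ by $u=P_1-P_2$ to get $u(A\otimes 1)u=-(A\otimes 1)$---is precisely what Proposition~\ref{prop:2col=>sym} does. However, your planned execution contains an unnecessary step that could become a genuine gap.

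You identify the ``real work'' as passing to a Hilbert space representation of the game algebra $\mathcal{A}$ to obtain operators on $L^2(\mathcal{G})\otimes H$. For $t=alg$, $\mathcal{A}$ is an arbitrary unital $*$-algebra and need not admit any faithful $*$-representation on a Hilbert space; even a non-faithful one could kill your projections. The paper avoids this entirely: the intertwining relation $(A\otimes 1_\mathcal{A})u=-u(A\otimes 1_\mathcal{A})$ already holds in $B(L^2(\mathcal{G}))\otimes\mathcal{A}$ as an abstract algebra, and the spectral conclusion is purely algebraic. For $\alpha\in\Spec(A)$ and $0\neq v\in\ker(\alpha\,\id-A)$, one has $((A+\alpha)\otimes 1_\mathcal{A})\,u(v\otimes 1_\mathcal{A})=0$ with $u(v\otimes 1_\mathcal{A})\neq 0$ since $u^2=1$; because $B$ is finite-dimensional, $(A+\alpha)\otimes 1_\mathcal{A}$ is non-invertible only if $A+\alpha$ is, so $-\alpha\in\Spec(A)$. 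No Hilbert space is needed. Your own sketch before the detour is already the complete argument.

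Two smaller points. First, your expectation that ``the tracial and regular hypotheses are exactly what make this promotion go through'' is off: Proposition~\ref{prop:2col=>sym} needs only that $\mathcal{G}$ is real. Traciality enters at $(5)\Rightarrow(1)$ via Theorem~\ref{thm:bipartiff2col}, and regularity together with connectedness at $(4)\Rightarrow(5)$. Second, for $(4)\Rightarrow(5)$ the correct citation is Theorem~\ref{thm:connbipartite} (bipartite under connectedness), not Theorem~\ref{thm:bipartite} (which only yields a bipartite \emph{component}).
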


Our main results are restricted to regular tracial quantum graphs.
So the non-tracial versions and the equivalence of the spectral gap of the graph Laplacian and the connectedness of irregular quantum graphs are left open.
The relation between our connectedness and the operator space theoretic connectedness \cite{Chavez2021connectivity} of quantum graphs is also left open.

\section*{Acknowledgement}

I am very grateful to Dr.~Priyanga Ganesan for informing me of the status of her parallel discussions with Professor Kristin Courtney about the algebraic characterization of connectedness.
I am very grateful to Professors Michael Brannan and Matthew Kennedy and their students Ms.~Jennifer Zhu and Ms.~Larissa Kroell for multiple discussions on the occasion of my stay at the University of Waterloo, and for connecting me to Dr.~Ganesan.
I would like to show my greatest gratitude to my supervisor Professor Beno\^{i}t Collins who guided me with generosity and patience.
I would like to thank my colleague Mr.~Akihiro Miyagawa for having mathematical discussions with me and pointing out typos.

This work was supported by JSPS KAKENHI Grant Number JP23KJ1270 and JST, the establishment of university fellowships towards the creation of science technology innovation, Grant Number JPMJFS2123.

\section{Preliminaries}

We denote by $(\cdot)^*$ the involution on a $*$-algebra and 
by $(\cdot)^\dagger$ the adjoint of an operator between Hilbert spaces. 

For a state $\psi$ on a $C^*$-algebra $B$, 
we denote the GNS space by $L^2(B,\psi)$, 
which is the Hausdorff completion of $B$ with respect to the sesquilinear form 
$\braket{x|y}=\braket{x|y}_\psi=\psi(x^* y)$ for $x,y \in B$. 
If $\dim B$ is finite and $\psi$ is faithful, 
we identify $B \ni x = \ket{x} \in L^2(B,\psi)$. 
Then we have the multiplication $m:B \otimes B \ni x \otimes y \mapsto xy \in B$ and the comultiplication $m^\dagger: B \to B \otimes B$. 
The unit $1_B$ is identified with a map $\C \ni 1 \mapsto 1_B \in B$ and its adjoint is the counit $\psi =1_B^\dagger =\bra{1_B}$.

Via a multimatrix presentation $B=\bigoplus_s M_{n_s}$, we denote the direct sum of unnormalized traces by $\Tr=\bigoplus_s \Tr_{n_s}$ (independent of the choice of the presentation) and denote the density matrix of 
$\psi=\Tr(Q \, \cdot)$ by $Q \in B$. If $\psi$ is faithful, $Q$ is positive and invertible.

\subsection{String diagrams}

We denote linear operators by string diagrams, which encode the compositions of operators from the bottom to the top. See 
\cite{Vicary2011categorical, Musto2018compositional} 
for string diagrams of Frobenius algebras and tracial quantum graphs and \cite{Matsuda2022classification} for diagrams of non-tracial quantum graphs.

In particular, we put 
\begin{align}
1_B=\ket{1_B} =
\begin{sd}
\draw (0,0)--++(0,-\sep/4) arc(90:-270:\unode);
\end{sd};
m=\begin{sd}
\draw (0,0) to[out=90,in=90] coordinate[midway](m) (\sep/2,0) ;
\draw (m)--++(0,\sep/4);
\end{sd};
\psi=\bra{1_B}=
\begin{sd}
\draw (0,0)--++(0,\sep/4) arc(-90:270:\unode);
\end{sd};
m^\dagger=\begin{sd}
\draw (0,0) to[out=-90,in=-90] coordinate[midway](m) (\sep/2,0) ;
\draw (m)--++(0,-\sep/4);
\end{sd}.
\end{align}
We abbreviate
$\psi m=\begin{sd}
\draw (0,0) to[out=90,in=90] coordinate[midway](m) (\sep/2,0) ;
\draw (m)--++(0,\sep/8) arc(-90:270:\unode);
\end{sd}
=\begin{sd}
\draw (0,0) to[out=90,in=90] coordinate[midway](m) (\sep/2,0) ;
\end{sd},
m^\dagger 1=\begin{sd}
\draw (0,0) to[out=-90,in=-90] coordinate[midway](m) (\sep/2,0) ;
\draw (m)--++(0,-\sep/8) arc(90:-270:\unode);
\end{sd}
=\begin{sd}
\draw (0,0) to[out=-90,in=-90] coordinate[midway](m) (\sep/2,0) ;
\end{sd}.$

The key point is that the string diagrams allow us graphical calculation (continuous deformation of diagrams) via associativity
$\begin{sd}
\draw (0,0)to[out=90,in=90]coordinate[midway](m1) (-\sep/4,0) 
	(m1)to[out=90,in=90] coordinate[midway](m2) (\sep/4,0) 
	(m2)--++(0,\sep/8);
\end{sd}
=
\begin{sd}
\draw (0,0)to[out=90,in=90]coordinate[midway](m1) (\sep/4,0) 
	(m1)to[out=90,in=90] coordinate[midway](m2) (-\sep/4,0) 
	(m2)--++(0,\sep/8);
\end{sd}$, 
coassociativity
$\begin{sd}
\draw (0,0)to[out=-90,in=-90]coordinate[midway](m1) (-\sep/4,0) 
	(m1)to[out=-90,in=-90] coordinate[midway](m2) (\sep/4,0) 
	(m2)--++(0,-\sep/8);
\end{sd}
=
\begin{sd}
\draw (0,0)to[out=-90,in=-90]coordinate[midway](m1) (\sep/4,0) 
	(m1)to[out=-90,in=-90] coordinate[midway](m2) (-\sep/4,0) 
	(m2)--++(0,-\sep/8);
\end{sd}$, 
and the Frobenius equality
$\begin{sd}
\draw (0,0)to[out=90,in=90]coordinate[midway](m1) (\sep/4,0) 
		--++(0,-\sep/6) 
	(0,0)to[out=-90,in=-90] coordinate[midway](m2) (-\sep/4,0) 
		--++(0,\sep/6) 
	(m1)--++(0,\sep/8) (m2)--++(0,-\sep/8) ;
\end{sd}
=
\begin{sd}
\draw (0,0)to[out=90,in=90]coordinate[midway](m1) (-\sep/3,0) 
	(0,\sep/3)to[out=-90,in=-90] coordinate[midway](m2) (-\sep/3,\sep/3) 
	(m2)--(m1);
\end{sd}
=
\begin{sd}
\draw (0,0)to[out=90,in=90]coordinate[midway](m1) (-\sep/4,0) 
		--++(0,-\sep/6) 
	(0,0)to[out=-90,in=-90] coordinate[midway](m2) (\sep/4,0) 
		--++(0,\sep/6) 
	(m1)--++(0,\sep/8) (m2)--++(0,-\sep/8) ;
\end{sd}$.
If $\psi$ is non-tracial, note that we sometimes need to deal with
\[
\begin{sd}
	\coordinate (B1) ;
	\coordinate[above =1/2 of B1] (B1a) ;
	\draw (B1) to[out=90,in=90] ($(B1)!1/2!(B1a)+(-\loopdiam,0)$)
	to[out=-90,in=-90] (B1a);
\end{sd}
\coloneqq
\begin{sd}
	\coordinate (B1) ;
	\draw (B1)--++(0,1/3) to[out=90,in=-90] ++(-1/3,1/3)  
		to[out=90,in=90]  ++(-1/3,0)--++(0,-1/3)
		to[out=-90,in=-90]  ++(1/3,0)
	to[out=90,in=-90] ++(1/3,1/3)--++(0,1/3);
\end{sd}
=Q^{-1}(\cdot)Q=\sigma_i,
\quad
\begin{sd}
	\coordinate (B1) ;
	\coordinate[above =1/2 of B1] (B1a) ;
	\draw (B1) to[out=90,in=90] ($(B1)!1/2!(B1a)+(\loopdiam,0)$)
	to[out=-90,in=-90] (B1a);
\end{sd}=Q(\cdot)Q^{-1}=\sigma_{-i},
\]
where $\sigma_z:B \to B$ for $z \in \C$ are the modular automorphisms 
$\sigma_z(x)=Q^{iz} x Q^{-iz}$ for the positive invertible density $Q \in B$ of the faithful state $\psi=\Tr(Q\, \cdot)$.

\subsection{Quantum graphs}

\begin{dfn}[{\cite{Musto2018compositional, Brannan2019bigalois}}]
A \emph{quantum set} is $(B,\psi)$ consisting of a finite-dimensional $C^*$-algebra $B$ with a $\delta$-form $\psi:B \to \C$, where the $\delta$-form is defined as a faithful state satisfying $mm^\dagger=\delta^2 \id_B$ for $\delta \geq 0$. 
\end{dfn}

We denote by $\tau_B$ the unique tracial $\delta=\sqrt{\dim B}$-form on $B$. 
If $B=\bigoplus_s M_{n_s}$ and $\psi=\Tr(Q\, \cdot)$ with $Q=\bigoplus Q_s$, then $\psi$ is a $\delta$-form if and only if $\Tr(Q_s^{-1})=\delta^2$ for all $s$, and $\tau_B= \bigoplus_s n_s \Tr_s /\dim B$.

\begin{dfn}[{\cite{Musto2018compositional, Brannan2019bigalois, Brannan2022quantum}}]
Let $(B,\psi)$ be a quantum set. We define the \emph{Schur product} $S \bullet T$ and  the involution $T^*$ of 
$S,T \in B(L^2(B,\psi))$ by
\begin{align}
S \bullet T \coloneqq \delta^{-2} m(S \otimes T)m^\dagger 
= \delta^{-2} 
\begin{sd}
	\coordinate (H1) at (0,-3/5*\sep) ;
	\coordinate (H3) at (0,3/5*\sep);
	\path (-\sep/3,0)node[circle,draw](A1){$S$} 
		(\sep/3,0)node[circle,draw](A2){$T$};
	\draw (A2.south) to[out=-90,in=-90] coordinate[midway] (m2) (A1.south) ;
	\draw (A2.north) to[out=90,in=90] coordinate[midway] (m3) (A1.north) ;
	\draw (m2)--(H1) (m3)--(H3);
\end{sd}; 
\quad
T^* \coloneqq (T(\cdot)^*)^* = 
\begin{sd}
	\node[circle,draw] (G) {$T^\dagger$};
	\draw (G)to[out=-90,in=-90] coordinate[midway] (m*) 
		([xshift=\sep/2]G.south);
	\draw (G)to[out=90,in=90] coordinate[midway] (m) 
		([xshift=-\sep/2]G.north);
	\draw 
		([xshift=\sep/2]G.south)--++(0,\sep/2) 
		([xshift=-\sep/2]G.north)--++(0,-\sep/2);
\end{sd},		
\end{align} 
with which $B(L^2(B,\psi))$ forms a $*$-algebra isomorphic to $B^\op\otimes B$. 
See for example \cite[Lemma 2.13]{Matsuda2022classification} about the identity of the involution and the diagram.
 The correspondence is given by 
\[
B(L^2(B,\psi)) \ni T \leftrightarrow
p_T \coloneqq
\delta^{-2} \begin{sd}
	\coordinate (H1) at (0,-3/5*\sep) ;
	\coordinate (H3) at (0,3/5*\sep);
	\path (-\sep/3,0)node[draw](A1){$\sigma_{i/2}$} 
		(\sep/3,0)node[draw](A2){$T$};
	\draw (A2.south) to[out=-90,in=-90] coordinate[midway] (m2) (A1.south) ;
	\draw (A2.north)--++(0,\sep/4) (A1.north)--++(0,\sep/4) ;
\end{sd}
\in B^\op \otimes B
\]
where $\sigma_{i/2}=Q^{-1/2}(\cdot)Q^{1/2}:B \to B$ is a modular automorphism and $p_T= \Psi_{0,1/2}^\prime(T)$ defined in \cite[Definition 5.1]{Daws2022quantum}.
See \cite{Gromada2022some} for the tracial setting and \cite{Daws2022quantum, Wasilewski2023quantum} for the details in general setting.

We say that $T:B \to B$ is \emph{real} if $T^*=T$ (i.e., $*$-preserving. 
cf.~\cite{Musto2018compositional});
 $T$ is \emph{Schur idempotent} if $T \bullet T=T$;
and $T$ is a \emph{Schur projection} if it is real and Schur idempotent.
\end{dfn}

We often use the realness of $T:B \to B$ in the form of
\begin{align}
\begin{sd}
\node[circle,draw](A){$T^\dagger$};
\draw (A)to[out=-90,in=-90] coordinate[midway](m) 
		([xshift=\sep/2]A.south)
		--([xshift=\sep/2]A.north)--++(0,\sep/4)
		(A.north)--++(0,\sep/4);
\end{sd}
=
\begin{sd}
\node[circle,draw](A){$T$};
\draw (A)to[out=-90,in=-90] coordinate[midway](m) 
		([xshift=-\sep/2]A.south)
		--([xshift=-\sep/2]A.north)--++(0,\sep/4)
		(A.north)--++(0,\sep/4);
\end{sd}
\textrm{ or }
\begin{sd}
\node[circle,draw](A){$T$};
\draw (A)to[out=90,in=90] coordinate[midway](m) 
		([xshift=\sep/2]A.north)
		--([xshift=\sep/2]A.south)--++(0,-\sep/4)
		(A.south)--++(0,-\sep/4);
\end{sd}
=
\begin{sd}
\node[circle,draw](A){$T^\dagger$};
\draw (A)to[out=90,in=90] coordinate[midway](m) 
		([xshift=-\sep/2]A.north)
		--([xshift=-\sep/2]A.south)--++(0,-\sep/4)
		(A.south)--++(0,-\sep/4);
\end{sd}. 	\label{realness}
\end{align}
Note that if $T$ is real, then $T^\dagger$ is real if and only if $T$ commutes with modular automorphisms $\sigma_z$ (c.f.~\cite[Proposition 2.15]{Matsuda2022classification}, \cite[Lemma 2.1]{Wasilewski2023quantum}).
This means that we cannot always replace $T$ with $T^\dagger$ in \eqref{realness}.

\begin{dfn}[KMS adjoint]
Wasilewski \cite{Wasilewski2023quantum} pointed out that the \emph{KMS inner product} $\braket{x|y}=\psi(x^* \sigma_{-i/2}(y))$ on $B$
behaves better than the GNS inner product 
$\braket{x|y}_\psi=\psi(x^* y)$ when we define non-tracial quantum Cayley graphs. 
They coincide if $\psi$ is tracial. 
The \emph{KMS adjoint} is the adjoint of an operator on (tensor powers of) $B$ with respect to the KMS inner product. 

The relation between the GNS adjoint $T^\dagger$ and the KMS adjoint $T^\ddagger$ of $T:B^{\otimes m}\to B^{\otimes n}$ is given by
$T^\ddagger = \sigma_{i/2}^{\otimes m} T^\dagger \sigma_{-i/2}^{\otimes n}$.
Define 
$\begin{sd}
	\coordinate (B1) ;
	\coordinate[above =1/2 of B1] (B1a) ;
	\draw (B1) to[out=90,in=0] ($(B1)!1/2!(B1a)+(-\loopdiam,0)$)
	to[out=0,in=-90] (B1a);
\end{sd} \coloneqq \sigma_{i/2}$,
$\begin{sd}
	\coordinate (B1) ;
	\coordinate[above =1/2 of B1] (B1a) ;
	\draw (B1) to[out=90,in=180] ($(B1)!1/2!(B1a)+(\loopdiam,0)$)
	to[out=180,in=-90] (B1a);
\end{sd} \coloneqq \sigma_{-i/2}$, and
$\begin{sd}
\draw (0,0) to[out=90,in=-90] (\sep/4,\sep/2) 
	 to[out=-90,in=90] (\sep/2,0) ;
\end{sd}
\coloneqq
\begin{sd}
	\coordinate (B1) ;
	\coordinate[above =1/4 of B1] (B1a) ;
	\coordinate[left=1/2 of B1] (B2) ;
	\coordinate[above =1/4 of B2] (B2a) ;
	\draw (B1) to[out=90,in=180] ($(B1)!1/2!(B1a)+(\loopdiam/2,0)$)
	to[out=180,in=-90] (B1a) to[out=90,in=90] (B2a)--(B2);
\end{sd}
=
\begin{sd}
	\coordinate (B1) ;
	\coordinate[above =1/4 of B1] (B1a) ;
	\coordinate[left=1/2 of B1] (B2) ;
	\coordinate[above =1/4 of B2] (B2a) ;
	\draw (B2) to[out=90,in=0] ($(B2)!1/2!(B2a)+(-\loopdiam/2,0)$)
	to[out=0,in=-90] (B2a) to[out=90,in=90] (B1a)--(B1);
\end{sd}$,
where the cusp stands for the operator in the middle of 
the straight string $\id_B$ and the loops $\sigma_{\pm i}$.
Then the KMS inner product is drawn as
$\braket{x|y}=\begin{sd}
\path (0,0) node[circle,draw,fill=white] (B1){$x^*$}
	 (\sep*0.6,0) node[circle,draw,fill=white] (B2){$y$};
\draw (B1) to[out=90,in=-90] ($(B1)!0.5!(B2)+(0,\sep/2)$) 
	 to[out=-90,in=90] (B2) ;
\end{sd}$ and the relation of the KMS adjoint and the involution is
$T^*=
\begin{sd}
	\node[circle,draw] (G) {$T^\ddagger$};
	\draw (G) to[out=-90,in=90] 
		($(G.south)!1/2!([xshift=\sep/2]G.south)+(0,-\loopdiam/2)$)
		to[out=90,in=-90]
		([xshift=\sep/2]G.south);
	\draw (G) to[out=90,in=-90]
		 ($(G.north)!1/2!([xshift=-\sep/2]G.north)+(0,\loopdiam/2)$)
		to[out=-90,in=90]
		([xshift=-\sep/2]G.north);
	\draw 
		([xshift=\sep/2]G.south)--++(0,\sep/2) 
		([xshift=-\sep/2]G.north)--++(0,-\sep/2);
\end{sd}$. Thus in terms of the KMS adjoint, the realness \eqref{realness} of $T$ is replaced by
\[
\begin{sd}
\node[circle,draw](A){$T^\ddagger$};
\draw (A)to[out=-90,in=90] 
		($(A.south)!1/2!([xshift=\sep/2]A.south)+(0,-\loopdiam/2)$)
		to[out=90,in=-90]
		([xshift=\sep/2]A.south)
		--([xshift=\sep/2]A.north)--++(0,\sep/4)
		(A.north)--++(0,\sep/4);
\end{sd}
=
\begin{sd}
\node[circle,draw](A){$T$};
\draw (A)to[out=-90,in=90] 
		($(A.south)!1/2!([xshift=-\sep/2]A.south)+(0,-\loopdiam/2)$)
		to[out=90,in=-90]
		([xshift=-\sep/2]A.south)
		--([xshift=-\sep/2]A.north)--++(0,\sep/4)
		(A.north)--++(0,\sep/4);
\end{sd}
\textrm{ or }
\begin{sd}
\node[circle,draw](A){$T$};
\draw (A)to[out=90,in=-90]
		 ($(A.north)!1/2!([xshift=\sep/2]A.north)+(0,\loopdiam/2)$)
		to[out=-90,in=90]
		([xshift=\sep/2]A.north)
		--([xshift=\sep/2]A.south)--++(0,-\sep/4)
		(A.south)--++(0,-\sep/4);
\end{sd}
=
\begin{sd}
\node[circle,draw](A){$T^\ddagger$};
\draw (A)to[out=90,in=-90]
		 ($(A.north)!1/2!([xshift=-\sep/2]A.north)+(0,\loopdiam/2)$)
		to[out=-90,in=90]
		([xshift=-\sep/2]A.north)
		--([xshift=-\sep/2]A.south)--++(0,-\sep/4)
		(A.south)--++(0,-\sep/4);
\end{sd}.
\]
A benefit of KMS adjoint is that $T^\ddagger$ is real if and only if $T$ is real. Indeed, the flip invariance
$\begin{sd}
\draw (0,0) to[out=90,in=-90] (\sep*3/8,\sep/4) 
	 to[out=90,in=-90] (\sep/4,\sep/2) 
	to[out=-90,in=90] (\sep/8,\sep/4)
	 to[out=-90,in=90] (\sep/2,0) ;
\end{sd}
=
\begin{sd}
	\coordinate (B1) ;
	\coordinate (B1a) at (-1/6,1/4);
	\coordinate (B2) at (-1/2,0) ;
	\coordinate (B2a) at (-1/3, 1/4);
	\draw (B1) to[out=90,in=0] ($(B1)!1/2!(B1a)+(-\loopdiam/2,0)$)
	to[out=0,in=-45] (B1a) 
	to[out=135,in=180]($(B1a)!1/2!(B2a)+(0,\loopdiam)$)
	to[out=0,in=45] (B2a)to[out=-135,in=90](B2);
\end{sd}
=
\begin{sd}
\draw (0,0) to[out=90,in=-90] (\sep/4,\sep/2) 
	 to[out=-90,in=90] (\sep/2,0) ;
\end{sd}$
implies the equivalence between the realness of $T$ and that of $T^\ddagger$ by flipping the strings:
\begin{align}
\begin{sd}
\node[circle,draw](A){$T$};
\draw (A)to[out=90,in=-90]
		 ($(A.north)!1/2!([xshift=\sep/2]A.north)+(0,\loopdiam/2)$)
		to[out=-90,in=90]
		([xshift=\sep/2]A.north)
		--([xshift=\sep/2]A.south)--++(0,-\sep/4)
		(A.south)--++(0,-\sep/4);
\end{sd}
=
\begin{sd}
\node[circle,draw](A){$T^\ddagger$};
\draw (A)to[out=90,in=-90]
		 ($(A.north)!1/2!([xshift=-\sep/2]A.north)+(0,\loopdiam/2)$)
		to[out=-90,in=90]
		([xshift=-\sep/2]A.north)
		--([xshift=-\sep/2]A.south)--++(0,-\sep/4)
		(A.south)--++(0,-\sep/4);
\end{sd}
\iff
\begin{sd}
\node[circle,draw](A){$T$};
\draw (A)to[out=90,in=-90]
		 ($(A.north)!1/2!([xshift=-\sep/2]A.north)+(0,\loopdiam/2)$)
		to[out=-90,in=90]
		([xshift=-\sep/2]A.north)
		--([xshift=-\sep/2]A.south)--++(0,-\sep/4)
		(A.south)--++(0,-\sep/4);
\end{sd}
=
\begin{sd}
\node[circle,draw](A){$T^\ddagger$};
\draw (A)to[out=90,in=-90]
		 ($(A.north)!1/2!([xshift=\sep/2]A.north)+(0,\loopdiam/2)$)
		to[out=-90,in=90]
		([xshift=\sep/2]A.north)
		--([xshift=\sep/2]A.south)--++(0,-\sep/4)
		(A.south)--++(0,-\sep/4);
\end{sd}.	\label{Treal=Tddreal}
\end{align}
Since the GNS adjoint is easier to treat in string diagrams, we stick to the GNS inner product in this paper.
\end{dfn}

\begin{dfn}[\cite{Musto2018compositional,Brannan2019bigalois}]
A quantum graph is a triple $\mathcal{G}=(B,\psi,A)$ consisting of a quantum set $(B,\psi)$ and an operator $A: B \to B$ satisfying Schur idempotence $A \bullet A=A$, which is called the adjacency matrix.
\end{dfn}

We denote the GNS space of a quantum graph $\mathcal{G}=(B,\psi,A)$ by $L^2(\mathcal{G}) \coloneqq L^2(B,\psi)$.

\begin{dfn}
Let $\mathcal{G}=(B,\psi,A)$ be a quantum graph.
\begin{itemize}
\item[\cite{Brannan2019bigalois}]
$\mathcal{G}$ is tracial (or symmetric) if $\psi$ is tracial, i.e., $\psi=\tau_B$;
\item[\cite{Musto2018compositional}]
$\mathcal{G}$ is real if $A$ is real $A^*=A$. The realness is equivalent to the complete positivity by the Schur idempotence of $A$ (cf.~\cite[Proposition 2.23]{Matsuda2022classification}, \cite[Remark 3.2]{Wasilewski2023quantum});
\item[\cite{Musto2018compositional}]
$\mathcal{G}$ is undirected if $A$ is both real and self-adjoint. 
This is equivalent to GNS symmetry (c.f.~\cite{Wasilewski2023quantum}) $\psi((Ax)y)=\psi(x(Ay))$ under the realness by \cite[Lemma 2.22]{Matsuda2022classification};
\item[\cite{Wasilewski2023quantum}]
$\mathcal{G}$ is KMS symmetric if $A$ is both real and KMS self-adjoint $A=A^\ddagger$;
\item[\cite{Musto2018compositional}]
$\mathcal{G}$ is reflexive (or has all loops) if $A \bullet \id=\id$;
\item[\cite{Musto2018compositional}]
$\mathcal{G}$ is irreflexive (or has no loops) if $A \bullet \id=0$;
\item[\cite{Gromada2022some}]
$\mathcal{G}$ has no partial loops if $A \bullet \id=\id \bullet A$;
\item[\cite{Matsuda2022classification}]
$\mathcal{G}$ is $d$-regular if $A1_B=d1_B=A^\dagger 1_B$. The $d \in \C$ is the degree of $\mathcal{G}$;
\item
$\mathcal{G}$ is Schur central if $A \bullet \cdot=\cdot \bullet A$, i.e., $A$ is central with respect to the Schur product.
\end{itemize}
\end{dfn}

\begin{lem}
Let $\mathcal{G}=(B,\psi,A)$ be a $d$-regular real quantum graph.
It follows that $d \in \R$.
\end{lem}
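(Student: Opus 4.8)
The plan is to exploit two facts: the unit $1_B$ is self-adjoint, and realness of $A$ is precisely the statement that $A$ is $*$-preserving. First I would unwind the definition of the involution $A^* = (A(\cdot^*))^*$: the condition $A^*=A$ means $(A(x^*))^* = A(x)$ for every $x \in B$, or equivalently $A(x)^* = A(x^*)$. This is the algebraic content of ``real'' recorded in the definition, and it is the only place where I need to be slightly careful in reading the meaning off the involution (equivalently, off the diagrammatic relation \eqref{realness}).

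Next I would specialize to $x = 1_B$. Since $1_B^* = 1_B$, the $*$-preserving property gives $A(1_B)^* = A(1_B^*) = A(1_B)$, so $A(1_B)$ is a self-adjoint element of $B$. On the other hand, $d$-regularity supplies $A 1_B = d\,1_B$. Combining these, $d\,1_B$ is self-adjoint, so $(d\,1_B)^* = \overline{d}\,1_B = d\,1_B$. Because $B$ is a nonzero $C^*$-algebra we have $1_B \neq 0$, and cancelling $1_B$ yields $\overline{d} = d$, i.e.\ $d \in \R$.

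I expect no genuine obstacle here; the argument is a two-line consequence of realness applied to the unit and of $d$-regularity. The only thing worth double-checking is the direction of the involution convention, namely that $A^*=A$ translates into $A$ intertwining the $*$-operation rather than some twisted variant involving the modular automorphisms $\sigma_z$ (which would matter for $A^\dagger$ but not for $A$ itself); once that is pinned down, the conclusion $d=\overline{d}$ is immediate. Note that one could alternatively run the same reasoning with $A^\dagger 1_B = d\,1_B$, but using $A$ directly is cleaner since realness is stated for $A$.
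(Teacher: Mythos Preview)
Your proof is correct and follows essentially the same idea as the paper: realness means $A$ is $*$-preserving, so $A1_B$ is self-adjoint, and combined with $A1_B=d\,1_B$ this forces $d=\overline{d}$. The paper just packages the same computation via inner products, writing $d=\braket{1_B\,|\,A1_B}=\braket{1_B\,|\,A^*1_B}=\braket{1_B\,|\,(A1_B)^*}=\overline{d}$, but the content is identical.
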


\begin{proof}
We have $d=\braket{1_B | A1_B}=\braket{1_B | A^* 1_B}=\braket{1_B | (A 1_B)^*}=\ol{d}$.
\end{proof}

\begin{dfn}[{Weaver \cite{Weaver2012quantum}}]
A quantum relation on a von Neumann algebra $B \subset B(H)$ is a weak*-closed $B'$-$B'$-bimodule $\mathcal{S} \subset B(H)$,
where we regard $B(H)$ as the dual of the trace class $TC(H)$ via the coupling $(S,T)\mapsto \Tr(ST)$.

If we chose $H=L^2(B,\psi)$ for a quantum set $(B,\psi)$, then a quantum relation on $B=\lambda(B) \subset B(H)$ is a $\rho(B)$-$\rho(B)$-bimodule $\mathcal{S} \subset B(H)$ where $\lambda$ (resp. $\rho$) is the left (resp. right) regular representation with respect to $\psi$.
\end{dfn}

Quantum relations on a quantum set $(B,\psi)$ are identified with $B$-$B$-bimodules $\mathcal{S} \subset B \otimes B$ via the identification:
\begin{align}
\iota:B(L^2(B,\psi)) \ni T \mapsto
\iota(T)=\begin{sd}
	\coordinate (H1) at (0,-3/5*\sep) ;
	\coordinate (H3) at (0,3/5*\sep);
	\path (-\sep/3,0)node(A1){} 
		(\sep/3,0)node[draw](A2){$T$};
	\draw (A2.south) to[out=-90,in=-90] coordinate[midway] (m2) (A1.south) ;
	\draw (A2.north)--++(0,\sep/4) (A1.south)--(A1.north)--++(0,\sep/4) ;
\end{sd}
\in B \otimes B.	\label{B(B)=BoxB}
\end{align}
See for example \cite{ManuilovTroitsky2005}, \cite[Appendix F]{Brown2008} about bimodules over von Neumann algebras, and \cite{Musto2018compositional} about the one-to-one correspondence above.

The linear isomorphism $\iota$ is the linear extension of $\iota(\ket{x}\bra{y})=\sigma_{-i}(y^*) \otimes x$ for $x,y \in B$.
We endow $B(L^2(B,\psi))$ with a Hilbert space structure via $\iota$,
i.e., 
\begin{align}
\braket{S|T}=\braket{\iota(S)|\iota(T)}_{\psi\otimes\psi}
=\Psi(S^\dagger T)=\delta^2 \braket{1|S^* \bullet T|1},
	\label{innerprodonB(L2)}
\end{align}
where $\Psi=\delta^2  \braket{1|\id_B \bullet \cdot |1} =
\begin{sd}
\node[dashed,draw, minimum size=0.3cm](T)at(0,0){};
\draw (T.north) to[out=90,in=90] ([xshift=-0.4cm]T) to[out=-90,in=-90] (T.south);
\end{sd}$
is an extension of $\delta^2 \psi$ on $\rho(B)$ to $B(L^2(B,\psi))$.

Let $P:L^2(B,\psi)^{\otimes 2 }\to L^2(B,\psi)^{\otimes 2}$ be the orthogonal projection onto the $B$-$B$-bimodule $\mathcal{S} \subset B \otimes B = L^2(B,\psi)^{\otimes 2}$. Then $P$ is $B$-$B$-bimodule map, i.e., $P(x \xi y)=xP(\xi)y$ for $x,y \in B$ and $\xi \in B \otimes B$.

There is a one-to-one correspondence (cf.~\cite{Musto2018compositional}) between $B$-$B$-bimodule projections $P$ on $B \otimes B$ and real quantum graphs $(B,\psi,A)$ as follows:
\begin{align}
	P=P_A &\coloneqq \delta^{-2}
	\begin{sd}
	\node[circle,draw] (G) {$A$};
	\draw (G)to[out=-90,in=-90] coordinate[midway] (m*) 
		([xshift=-\sep/2]G.south);
	\draw (G)to[out=90,in=90] coordinate[midway] (m) 
		([xshift=\sep/2]G.north);
	\draw (m)--++(0,\sep/4) (m*)--++(0,-\sep/4) 
		([xshift=-\sep/2]G.south)--++(0,\sep/2) 
		([xshift=\sep/2]G.north)--++(0,-\sep/2);
	\end{sd};
	&
	A=A_P &= \delta^2
	\begin{sd}
	\node[block] (P) {$P$};
	\draw ([xshift=\sep/3]P.north)--++(0,1/4);
	\draw ([xshift=-\sep/3]P.north)--++(0,1/4) arc(-90:270:\unode);
	\draw ([xshift=\sep/3]P.south)--++(0,-1/4) arc(90:-270:\unode);
	\draw ([xshift=-\sep/3]P.south)--++(0,-1/4);
	\end{sd}. 	\label{graph-rel}
\end{align}
Note that $P_A=\iota \tilde{P_A} \iota^{-1}$ is the reformulation of left Schur product by $A$: 
\[
\tilde{P_A} = A \bullet (\cdot) :B(L^2(B,\psi)) \ni T \mapsto A\bullet T \in B(L^2(B,\psi)).
\]

\begin{ex}
We denote the reflexive trivial graph on a quantum set $(B,\psi)$ by $T(B,\psi)=(B,\psi,\id_B)$, which is undirected $1$-regular.
This is the quantum version of graphs with all loops. 
Its corresponding quantum relation is the commutant of $B$: 
$\mathcal{S}_{T(B,\psi)}=\lambda(B)'=\rho(B) \subset B(L^2(B,\psi))$.
We abbreviate the classical trivial graphs $T_n=T(\C^n,\tau_{\C^n})$ for $n \in \N$.

We denote the irreflexive complete graph on a quantum set $(B,\psi)$ with $\delta$-form by $K(B,\psi)=(B,\psi, \delta^2 \psi(\cdot)1_B - \id_B)$,
which is undirected $(\delta^2-1)$-regular.
This is the quantum version of graphs with all edges except loops. 
Its corresponding quantum relation is the orthocomplement of the commutant of $B$: 
$\mathcal{S}_{K(B,\psi)}=\mathcal{S}_{T(B,\psi)}^\perp=\rho(B)^\perp \subset B(L^2(B,\psi))$.
We abbreviate the classical complete graphs $K_n=K(\C^n,\tau_{\C^n})$ for $n \in \N$.
We denote by $J=\delta^2 \psi(\cdot)1_B$ the adjacency matrix of the reflexive complete quantum graphs. 
\end{ex}

The degree of a regular classical graph is at most the size of the vertex set.
The value $\delta^2$ plays the role of the size of a quantum set and it bounds the degree.

\begin{lem}\label{lem:deg<dim}
Let $\mathcal{G}=(B,\psi,A)$ be a $d$-regular real quantum graph.
Then $0 \leq d \leq \delta^2$.
In particular, $d=0$ if and only if $A=0$, and $d=\delta^2$ if and only if $A=\delta^2 \psi(\cdot)1_B$.
\end{lem}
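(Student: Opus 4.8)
The plan is to obtain the lower bound $d \ge 0$ directly from positivity, to read off the extremal case $d = 0$, and then to deduce both the upper bound $d \le \delta^2$ and the case $d = \delta^2$ by applying the lower-bound statement to the complementary graph $J - A$, where $J = \delta^2 \psi(\cdot) 1_B$. Throughout I use that a real Schur-idempotent $A$ is completely positive (as recorded in the excerpt), hence positive, and that $d \in \R$ by the earlier lemma.

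\emph{Lower bound.} Since $A$ is positive and $1_B \ge 0$, the element $A 1_B = d\,1_B$ is positive; as $1_B \neq 0$ and $d$ is real, this forces $d \ge 0$. One direction of $d = 0 \iff A = 0$ is trivial: $A = 0$ gives $A 1_B = d\,1_B = 0$, so $d = 0$. For the converse, $d = 0$ means $A 1_B = 0$, and I would invoke that a positive linear map between unital $C^*$-algebras attains its norm at the unit, so that $A 1_B = 0$ forces $A = 0$. (Alternatively one may use the Kadison--Schwarz inequality $A(x)^* A(x) \le \norm{A 1_B}\, A(x^* x)$ valid for completely positive $A$, whose right-hand side vanishes when $A 1_B = 0$.)

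\emph{Complement trick.} First I record that $J$ is the unit for the Schur product: from the counit laws $(\psi \otimes \id) m^\dagger = \id = (\id \otimes \psi) m^\dagger$ one computes $J \bullet T = T = T \bullet J$ for all $T$, the factors $\delta^{\pm 2}$ cancelling. Moreover $J$ is real, since $\psi(x^*) = \ol{\psi(x)}$ yields $J^* = J$, and $J$ is $\delta^2$-regular because $J 1_B = \delta^2 1_B = J^\dagger 1_B$. It follows that $J - A$ is again a real quantum graph: it is real, and it is Schur idempotent since $(J - A) \bullet (J - A) = J - J \bullet A - A \bullet J + A \bullet A = J - A$, using that $J$ is the Schur unit and $A \bullet A = A$. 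Since both $J$ and $A$ are regular, $J - A$ is $(\delta^2 - d)$-regular.

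\emph{Conclusion.} Applying the lower-bound statement to the real $(\delta^2 - d)$-regular graph $(B, \psi, J - A)$ gives $\delta^2 - d \ge 0$, i.e.\ $d \le \delta^2$, together with $\delta^2 - d = 0 \iff J - A = 0 \iff A = \delta^2 \psi(\cdot) 1_B$. I expect the only non-formal ingredient to be the implication $A 1_B = 0 \Rightarrow A = 0$, which rests on the norm property of positive maps (or on Kadison--Schwarz); the remaining steps follow mechanically from positivity and from $J$ being a real Schur unit.
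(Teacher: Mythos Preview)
Your proof is correct but takes a different route from the paper. The paper works entirely through the identification of $A$ with a projection $p_A \in B^{\op}\otimes B$: since $0 \le p_A \le p_J = 1\otimes 1$, evaluating the faithful state $\psi^{\otimes 2}$ gives $0 \le d = \psi^{\otimes 2}(p_A) \le \psi^{\otimes 2}(p_J) = \delta^2$, and faithfulness of $\psi^{\otimes 2}$ handles both extremal cases in one stroke. You instead use complete positivity of $A$ directly for the lower bound, import the Russo--Dye-type fact $\norm{A} = \norm{A1_B}$ (or Kadison--Schwarz) for the implication $d=0 \Rightarrow A=0$, and then run the same argument on the complement $J-A$ for the upper bound. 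The paper's approach is more uniform and self-contained within the Schur-product framework it has already set up; yours is arguably more elementary in that it never needs to unpack the $B^{\op}\otimes B$ correspondence, at the cost of invoking an external (though standard) fact about positive maps.
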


\begin{proof}
We use the correspondence between $A$ and a projection $p_A \in B^\op \otimes B$.
Note that the reflexive complete graph $(B,\psi, J=\delta^2 \psi(\cdot)1_B)$ corresponds to the maximal projection $p_J =1 \otimes 1 \in B^\op\otimes B$.
Since $0 \leq p_A \leq p_J$ and $\psi^{\otimes2}$ is a state on $B^\op\otimes B$,
we have 
\[
0 \leq d=\psi(A1_B)=\psi^{\otimes2}(p_A) \leq \psi^{\otimes2}(p_J)=\psi(J1_B)=\delta^2.
\]
Since $\psi^{\otimes 2}$ is faithful,
 $d=0$ holds if and only if $A=0$, 
and $d=\delta^2$ holds if and only if $A=J$.
\end{proof}

Gromada \cite[section 2.3]{Gromada2022some} pointed out that the value 
$\delta^2 \braket{1_B|A|1_B} = \delta^2\psi(A1_B)$ is the number of edges.
This value is strictly positive whenever $A$ is nonzero and real:

\begin{lem}\label{lem:numofedges}
Let $\mathcal{G}=(B,\psi, A)$ be a real quantum graph.
Then $\braket{1_B|A|1_B}\geq 0$ with equality if and only if $A=0$.
\end{lem}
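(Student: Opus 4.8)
The plan is to recognize $\braket{1_B|A|1_B}$ as a positive multiple of the squared Hilbert-space norm of $A$ under the inner product \eqref{innerprodonB(L2)} on $B(L^2(\mathcal{G}))$, exploiting that the adjacency matrix of a real quantum graph is simultaneously real and Schur idempotent. This is the cleanest way to extract positivity, since realness alone does not make $A$ a positive operator on $L^2$, so a naive expansion of $A=A\bullet A$ into $\braket{m^\dagger 1_B|(A\otimes A)m^\dagger 1_B}$ would not manifestly be nonnegative.

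First I would specialize \eqref{innerprodonB(L2)}, namely $\braket{S|T}=\delta^2\braket{1_B|S^*\bullet T|1_B}$, to $S=T=A$, obtaining
\[
\braket{A|A}=\delta^2\braket{1_B|A^*\bullet A|1_B}.
\]
Then I would invoke the two defining features of a real quantum graph, realness $A^*=A$ and Schur idempotence $A\bullet A=A$, which together give $A^*\bullet A=A\bullet A=A$. Substituting collapses the right-hand side to
\[
\braket{A|A}=\delta^2\braket{1_B|A|1_B}.
\]
Since $mm^\dagger=\delta^2\id_B$ with $\id_B\neq0$ forces $\delta^2>0$, I can rearrange to $\braket{1_B|A|1_B}=\delta^{-2}\braket{A|A}$.

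The positivity is then immediate: the form $\braket{\cdot|\cdot}$ on $B(L^2(\mathcal{G}))$ is the pullback along the linear isomorphism $\iota$ of the faithful inner product $\braket{\cdot|\cdot}_{\psi\otimes\psi}$ on $B\otimes B$, hence a genuine (positive-definite) inner product, so $\braket{A|A}\geq0$. For the equality case I would note that $\braket{1_B|A|1_B}=0$ is equivalent to $\braket{A|A}=0$, which by positive-definiteness forces $\iota(A)=0$, and by injectivity of $\iota$ gives $A=0$; the converse is trivial.

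I expect no serious obstacle: the entire argument rests on the single identity $A^*\bullet A=A$, which is exactly where both hypotheses (real and Schur idempotent) are consumed, together with the positive-definiteness of $\braket{\cdot|\cdot}$. As a cross-check, one can run the alternative route used for Lemma \ref{lem:deg<dim}: realness and Schur idempotence make $p_A$ a genuine projection in $B^\op\otimes B$, so $p_A\geq0$, whence $\braket{1_B|A|1_B}=\psi(A1_B)=\psi^{\otimes2}(p_A)\geq0$, with equality if and only if $p_A=0$, i.e.\ $A=0$, by faithfulness of $\psi^{\otimes2}$.
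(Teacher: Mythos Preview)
Your proof is correct. Your main argument is a genuinely different route from the paper's, though you also reproduce the paper's approach in your cross-check.

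The paper argues exactly as in your final paragraph: it invokes the correspondence $A\leftrightarrow p_A\in B^\op\otimes B$, observes that realness plus Schur idempotence make $p_A$ a projection, and then reads off $\braket{1_B|A|1_B}=\psi^{\otimes2}(p_A)\geq0$ with the equality case coming from faithfulness of $\psi^{\otimes2}$. Your primary argument instead stays entirely inside $B(L^2(\mathcal{G}))$ with its Hilbert-space structure \eqref{innerprodonB(L2)}: you recognize that $A^*\bullet A=A$ collapses $\braket{A|A}=\delta^2\braket{1_B|A|1_B}$, so positivity and definiteness come directly from the inner product rather than from positivity of an element in $B^\op\otimes B$. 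This is slightly more self-contained, since it avoids the identification $\braket{1_B|A|1_B}=\psi^{\otimes2}(p_A)$ and the projection correspondence, at the cost of relying on the positive-definiteness of $\braket{\cdot|\cdot}$ (which in turn rests on $\iota$ being a linear isomorphism and $\psi$ being faithful). The two arguments are morally the same computation viewed through different isomorphisms, but your packaging is clean and arguably more direct.
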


\begin{proof}
Similarly to the proof of Lemma \ref{lem:deg<dim}, we have
\[
\braket{1_B|A|1_B}=\psi^{\otimes2}(p_A)\geq 0.
\]
Since $\psi^{\otimes 2}$ is faithful,
 $\braket{1_B|A|1_B}=0$ holds if and only if $A=0$.
\end{proof}

For later use, we show that the eigenspace for any real eigenvalue of a real quantum graph is spanned by self-adjoint elements:

\begin{lem}\label{lem:saevreal}
Let $(B,\psi,A)$ be a real quantum graph and $x \in B$ be an eigenvector for an eigenvalue $\lambda$ of $A$. Then $x^*$ is an eigenvector for the eigenvalue $\ol{\lambda}$ of $A$.
In particular if $\lambda \in \Spec(A) \cap \R$, then the eigenspace 
$\ker(\lambda \, \id -A)$ and the generalized eigenspace $\ker(\lambda \, \id -A)^{\dim B}$ are spanned by self-adjoint elements.
\end{lem}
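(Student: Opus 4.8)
The plan is to translate the abstract realness hypothesis $A^*=A$ into a pointwise identity and then exploit the antilinear conjugation $c\colon B\to B$, $c(x)=x^*$, to transport eigenvectors. First I would unwind the definition of the involution: since $T^*=(T((\cdot)^*))^*$, the condition $A^*=A$ reads $A(x^*)^*=A(x)$, i.e.\ $A(x^*)=A(x)^*$ for every $x\in B$. In operator form this is exactly the statement that $A$ commutes with the conjugation,
\[
A\circ c = c\circ A.
\]

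For the first assertion, if $Ax=\lambda x$ then $A(x^*)=A(x)^*=(\lambda x)^*=\ol{\lambda}\,x^*$, so $x^*$ is an eigenvector for $\ol{\lambda}$. This is immediate from the displayed identity and needs nothing further.

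Next, assume $\lambda\in\Spec(A)\cap\R$. Because $c$ is \emph{antilinear}, a direct computation using $A\circ c=c\circ A$ gives
\[
c\circ(\lambda\,\id - A) = (\ol{\lambda}\,\id - A)\circ c,
\]
and since $\lambda=\ol{\lambda}$ this reduces to $c\circ(\lambda\,\id-A)=(\lambda\,\id-A)\circ c$. Iterating, $c$ commutes with $(\lambda\,\id-A)^{k}$ for every $k$, in particular for $k=\dim B$. Hence $c$ maps each of $\ker(\lambda\,\id-A)$ and $\ker(\lambda\,\id-A)^{\dim B}$ into itself. Finally I would invoke the elementary fact that any complex subspace $V\subseteq B$ stable under the antilinear involution $c$ is spanned by its self-adjoint elements: for $v\in V$ one writes $v=\tfrac12(v+v^*)+i\cdot\tfrac{1}{2i}(v-v^*)$, where both $\tfrac12(v+v^*)$ and $\tfrac{1}{2i}(v-v^*)$ are self-adjoint and lie in $V$ (as $c(v)\in V$). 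Applying this to the two $c$-stable subspaces yields the claim.

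The computation is entirely routine, and there is no genuine obstacle; the only point requiring care is the antilinearity of $c$, which is precisely why the hypothesis $\lambda\in\R$ is essential. For non-real $\lambda$ the conjugation sends the $\lambda$-(generalized) eigenspace to the $\ol{\lambda}$-one rather than preserving it, so the self-adjoint spanning conclusion can hold only along the real part of the spectrum.
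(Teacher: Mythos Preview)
Your proof is correct and takes essentially the same approach as the paper: both use that realness $A^*=A$ means $A(x^*)=(Ax)^*$, apply the involution to $(\lambda\,\id-A)x=0$ to see that $x^*$ is an eigenvector for $\ol{\lambda}$, and then for real $\lambda$ decompose $x$ into its self-adjoint real and imaginary parts. Your phrasing in terms of the antilinear conjugation $c$ commuting with $(\lambda\,\id-A)^k$ is slightly more explicit for the generalized eigenspace case (the paper simply says ``Similarly''), but the argument is the same.
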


\begin{proof}
Taking the involution of $(\lambda \, \id -A)x = 0$, we get 
$(\ol{\lambda}\, \id -A)x^* = (\lambda x)^*  -(Ax)^* 
= ((\lambda\, \id -A)x)^* = 0$. 
If $\lambda$ is real, then both $x$ and $x^*$ are eigenvectors for $\lambda$, hence $\Re x=\frac{x+x^*}{2}, \Im x=\frac{x-x^*}{2i}$ are also eigenvectors for $\lambda$. Since $x$ is arbitrary, $\ker(\lambda\,  \id -A)$ is spanned by self-adjoint elements. Similarly $\ker(\lambda\,  \id -A)^{\dim B}$ is so.
\end{proof}

\section{Spectral bound for regular quantum graphs}

In classical graph theory, $d$-regular graph is known to have spectral radius $d$ (cf.~\cite{Chung1997}) and hence it makes sense to argue whether the second largest eigenvalue is $d$ and the smallest eigenvalue is $-d$.
Here we introduce the notion of graph gradient to prove this spectral bound for regular quantum graphs. 

\subsection{Graph gradient of quantum graphs}

\begin{dfn}
Let $(B,\psi,A)$ be a quantum graph. Define a linear operator 
$\nabla=\nabla_A: B \to B \otimes B$ by
\[
\nabla_A=\delta^{-2}(A^\dagger \otimes \id_B - \id_B \otimes A)m^\dagger
=
\delta^{-2} \left(
\begin{sd}
\node[circle,draw](A){$A^\dagger$};
\draw (A)to[out=-90,in=-90] coordinate[midway](m) 
		([xshift=\sep/2]A.south)
		--([xshift=\sep/2]A.north)--++(0,\sep/4)
		(A.north)--++(0,\sep/4)
		(m)--++(0,-\sep/4);
\end{sd}
-
\begin{sd}
\node[circle,draw](A){$A$};
\draw (A)to[out=-90,in=-90] coordinate[midway](m) 
		([xshift=-\sep/2]A.south)
		--([xshift=-\sep/2]A.north)--++(0,\sep/4)
		(A.north)--++(0,\sep/4)
		(m)--++(0,-\sep/4);
\end{sd}
\right).
\]
We call $\nabla_A$ the graph gradient.
\end{dfn}

This gradient coincides with the classical one in the following manner.

\begin{lem}\label{lem:classicalgrad}
Let $(V,E \subset V \times V)$ be a classical directed graph corresponding to
$(C(V)=\C^n, \tau, A)$ with $A_{ij}=\chi_E (j,i)$ where $\chi_E$ is the indicator function of $E$. The classical graph gradient $\nabla_E:C(V) \to C(E)$ (the so-called coboundary operator in \cite{Chung1997}) is defined by 
\[
\nabla_E f (i,j)=f(j)-f(i)
\qquad f\in C(V), \ \ (i,j)\in E.
\]
It holds that $\nabla_A=\iota \circ \nabla_E$, where 
$\iota: C(E) \to C(V)\otimes C(V)=C(V\times V)$ is the extension of functions on $E$ to $V \times V$ with outside zero.
\end{lem}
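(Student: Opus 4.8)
The plan is to verify the operator identity $\nabla_A=\iota\circ\nabla_E$ by evaluating both sides on the standard basis of indicator functions, as both maps are linear. Write $e_i\in C(V)=\C^n$ for the indicator of the vertex $i$, so that $\{e_i\}_{i\in V}$ is a basis and, under the identification $C(V)\otimes C(V)=C(V\times V)$, the element $e_i\otimes e_j$ is the indicator of the pair $(i,j)$. First I would record the data of the tracial classical quantum set $(\C^n,\tau)$: here $\delta^2=\dim B=n$, the GNS inner product satisfies $\langle e_i\,|\,e_j\rangle=\tfrac1n\,\delta_{ij}$ (so the $e_i$ are orthogonal but not unit vectors), and consequently the comultiplication is $m^\dagger(e_k)=n\,e_k\otimes e_k$. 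This last formula follows from the adjoint relation $\langle m^\dagger e_k\,|\,e_a\otimes e_b\rangle=\langle e_k\,|\,e_ae_b\rangle$ and the pointwise product $e_ae_b=\delta_{ab}e_a$, and it is consistent with the $\delta$-form condition $mm^\dagger=n\,\id$.

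Next I would compute the action of $A$ and of its GNS adjoint. From $A_{ij}=\chi_E(j,i)$ one reads off $A e_k=\sum_{i:(k,i)\in E}e_i$. Because the state is tracial the inner product weights are uniform, so $A^\dagger$ is just the transpose of $A$, giving $A^\dagger e_k=\sum_{i:(i,k)\in E}e_i$. Substituting these together with $m^\dagger e_k=n\,e_k\otimes e_k$ into $\nabla_A=\delta^{-2}(A^\dagger\otimes\id-\id\otimes A)m^\dagger$, the factor $n$ from $m^\dagger$ cancels against $\delta^{-2}=n^{-1}$, and I obtain
\[
\nabla_A(e_k)=\sum_{i:(i,k)\in E}e_i\otimes e_k-\sum_{j:(k,j)\in E}e_k\otimes e_j.
\]

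On the classical side I would evaluate $\iota(\nabla_E e_k)$. By definition $(\nabla_E e_k)(i,j)=e_k(j)-e_k(i)$ on edges $(i,j)\in E$, and $\iota$ extends by zero off $E$, so as a function on $V\times V$ it equals $\chi_E(i,j)\big(\delta_{kj}-\delta_{ki}\big)$. Expanding in the basis $\{e_i\otimes e_j\}$ and collecting the two Kronecker deltas, the $\delta_{kj}$ term forces $j=k$ and contributes $\sum_{i:(i,k)\in E}e_i\otimes e_k$, while the $-\delta_{ki}$ term forces $i=k$ and contributes $-\sum_{j:(k,j)\in E}e_k\otimes e_j$. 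This coincides with the expression above for $\nabla_A(e_k)$, proving the claim.

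I expect the only delicate point to be bookkeeping rather than any real difficulty. One must track the non-normalized basis when computing $m^\dagger$ — the factor $n$ there is exactly what makes the normalization $\delta^{-2}$ cancel — and one must align the index conventions so that the incoming edges attach to the left tensor leg and the outgoing edges to the right: the convention $A_{ij}=\chi_E(j,i)$, the placement of $A^\dagger$ on the first leg versus $A$ on the second, and the sign in $\nabla_E f(i,j)=f(j)-f(i)$ must all be matched. Conceptually the identity reflects that $m^\dagger$ duplicates a vertex along the diagonal, after which $A^\dagger\otimes\id$ and $\id\otimes A$ spread it to its in- and out-neighbors, and the subtraction records the edge difference $f(j)-f(i)$.
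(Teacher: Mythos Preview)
Your proof is correct and follows essentially the same direct computation as the paper, using $m^\dagger e_k = n\,e_k\otimes e_k$ and the explicit action of $A$ and $A^\dagger$ on basis vectors. The only cosmetic difference is that you verify the identity by fixing an input basis vector $e_k$ and expanding the output, whereas the paper fixes an output coordinate $(i,j)$ and checks the value for a general $f$; these are dual versions of the same verification.
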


\begin{proof}
Note that the evaluation map $C(V) \ni f \mapsto f(i) \in \C$ at $i\in V$ is given by $n \bra{e_i}=n \tau(e_i \ \cdot )$ for the tracial $\sqrt{n}$-form $\tau$ and $m^\dagger e_k=n e_k \otimes e_k$.
By direct computation we have for $f\in C(V)$ and $i,j \in V$ that
\begin{align}
\nabla_A f (i,j) &= n^2 (\bra{e_i} \otimes \bra{e_j}) \nabla_A f
\\
&= n^2 n^{-1} \left( \bra{e_i}A^\dagger \otimes \bra{e_j} - \bra{e_i} \otimes \bra{e_j} A \right)
	n \sum_k f(k) e_k \otimes e_k
\\
&= n^2 (\braket{e_i | A^\dagger f(j) | e_j} n^{-1} - n^{-1} \braket{e_j | A f(i) |e_i})
\\
&= n \sum_{(k,j) \in E} \braket{e_i | e_k} f(j) - n \sum_{(i,k) \in E}\braket{e_j |e_k} f(i)
\\
&= (f(j)-f(i)) \chi_E (i,j) = (\iota \nabla_E f)(i,j).
\end{align}
\end{proof}

The graph gradient $\nabla_A$ is the commutator of the right regular representation $\rho(\cdot)$ and $A$ 
via the identification \eqref{B(B)=BoxB} 
$\iota: B(L^2(\mathcal{G}))\cong B \otimes B$:

\begin{prop}
Let $(B,\psi,A)$ be a real quantum graph. For $x \in B$, we have 
\[
	\delta^{2} \iota^{-1} (\nabla_A x) = [\rho(x),A] 
	\coloneqq \rho(x)A-A\rho(x).
\]
\end{prop}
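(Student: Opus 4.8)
The plan is to transport the asserted operator identity through the linear isomorphism $\iota$ of \eqref{B(B)=BoxB} and verify it as an equality of vectors in $B\otimes B$. From the definition of $\nabla_A$ one has
\[
\delta^2\nabla_A x = (A^\dagger\otimes\id_B)m^\dagger x - (\id_B\otimes A)m^\dagger x,
\]
so, since $\iota$ is a linear isomorphism, it suffices to prove the two diagrammatic identities
\[
\iota(A\rho(x)) = (\id_B\otimes A)m^\dagger x, \qquad \iota(\rho(x)A) = (A^\dagger\otimes\id_B)m^\dagger x,
\]
and then to subtract them and apply $\iota^{-1}$.

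The first ingredient I would record is that the diagram defining $\iota$ reads $\iota(T) = (\id_B\otimes T)\,m^\dagger 1_B$, i.e.\ $\iota$ feeds the input strand of $T$ into the right leg of the cup $m^\dagger 1_B$ and carries the left leg straight up. The second ingredient is that $m^\dagger$ is a morphism of $B$-bimodules (equivalently, the Frobenius relation recalled in Section 1): right multiplication on the right leg of the cup straightens to $(\id_B\otimes\rho(x))\,m^\dagger 1_B = m^\dagger x$. With these two facts the first identity is immediate, since $\iota(A\rho(x)) = (\id_B\otimes A)(\id_B\otimes\rho(x))\,m^\dagger 1_B = (\id_B\otimes A)m^\dagger x$; note that this term uses no hypothesis on $A$.

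For the second identity I would write $\iota(\rho(x)A) = (\id_B\otimes\rho(x))(\id_B\otimes A)\,m^\dagger 1_B$ and then slide $A$ from the right leg of the cup to the left leg. This is exactly where realness enters: equation \eqref{realness} says precisely that for real $A$ one has $(\id_B\otimes A)\,m^\dagger 1_B = (A^\dagger\otimes\id_B)\,m^\dagger 1_B$, so $A$ reappears as $A^\dagger$ on the opposite strand. Because $\id_B\otimes\rho(x)$ and $A^\dagger\otimes\id_B$ act on different tensor factors they commute, and a second application of $(\id_B\otimes\rho(x))\,m^\dagger 1_B = m^\dagger x$ gives $\iota(\rho(x)A) = (A^\dagger\otimes\id_B)m^\dagger x$.

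Subtracting the two identities then yields $\iota([\rho(x),A]) = \delta^2\nabla_A x$, and applying $\iota^{-1}$ and dividing by $\delta^2$ is the assertion. I expect the only delicate point to be the middle step of the second identity, namely correctly bending $A$ across the cup and recognizing the resulting diagram as the realness relation \eqref{realness} rather than some modular-twisted transpose; this is the one place where the hypothesis that $\mathcal{G}$ is real is genuinely used, whereas the $A\rho(x)$ term requires no such assumption.
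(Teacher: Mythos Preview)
Your proof is correct and is essentially the paper's argument run through $\iota$ rather than $\iota^{-1}$: the paper writes down $\iota^{-1}$ of each summand of $\delta^2\nabla_A x$ as a string diagram and simplifies, using the Frobenius/snake identity to turn $\iota^{-1}((\id\otimes A)m^\dagger x)$ into $A\rho(x)$ and the realness relation \eqref{realness} to turn $\iota^{-1}((A^\dagger\otimes\id)m^\dagger x)$ into $\rho(x)A$. Your observation that only the second term requires realness is exactly right and matches the paper's diagram manipulation.
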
 

\begin{proof}
By direct computation, we get
\[
	\delta^{2} \iota^{-1} (\nabla_A x) 
=
	\begin{sd}
	\node[circle,draw] (G) {$A^\dagger$};
	\draw (G)to[out=90,in=90] coordinate[midway] (m*) 
		([xshift=-\sep/2]G.north);
	\draw (G)to[out=-90,in=-90] coordinate[midway] (m) 
		([xshift=\sep/2]G.south);
	\draw (m)--++(0,-\sep/8)
		node[circle,draw,below] (x){$x$}
		([xshift=-\sep/2]G.north)--++(0,-\sep/2) 
		([xshift=\sep/2]G.south)--++(0,\sep/2);
	\end{sd}
-
	\begin{sd}
	\node[circle,draw](A){$A$};
	\draw (A)to[out=-90,in=-90] coordinate[midway](m) 
		([xshift=-\sep/2]A.south)
		to[out=90,in=90]([xshift=-\sep]A.south)--++(0,-\sep/4)
		(A.north)--++(0,\sep/4)
		(m)--++(0,-\sep/8)
		node[circle,draw,below] (x){$x$};
	\end{sd}
=
	\begin{sd}
	\node[circle,draw] (A) {$A$};
	\draw (A)to[out=90,in=90] coordinate[midway] (m) 
		([xshift=\sep*0.6]A.north) node[circle,draw,below] (x){$x$};
	\draw (m)--++(0,\sep/4) (A.south)--++(0,-\sep/8);
	\end{sd}
-
	\begin{sd}
	\coordinate (B);
	\draw (B)--++(0,-\sep/4);
	\draw (B)to[out=90,in=90] coordinate[midway] (m) 
		([xshift=\sep/2]B.north) node[circle,draw,below] (x){$x$};
	\draw (m)--++(0,\sep/8)node[circle,draw,above] (A) {$A$}
	(A.north)--++(0,\sep/4);
	\end{sd}
=[\rho(x),A].
\]
\end{proof}

Recall the one-to-one correspondence \eqref{graph-rel} between real quantum graphs $A$ on $(B,\psi)$ and `edge space' $B$-$B$-bimodules $\mathcal{S}=\ran P_A \subset B \otimes B$ represented by orthogonal projection $P_A$ onto $\mathcal{S} \subset L^2(B,\psi)^{\otimes 2}$.
Similarly to the classical case, $\nabla_A$ is a map to the edge space.

\begin{prop}
Let $(B,\psi,A)$ be a real quantum graph. Then the following holds.
\begin{description}
\item[$(1)$]
The range of $\nabla_A$ is included in $\ran P_A$, i.e., 
$P_A \nabla_A=\nabla_A$.
\item[$(2)$]
The operator $\nabla_A$ is a $\C$-derivation, i.e., $\nabla_A(xy)=(\nabla_A x)y+x(\nabla_A y)$ for all $x,y \in B$ and $\nabla_A(\lambda)=0$ for any $\lambda \in \C \subset B$.
\end{description}
\end{prop}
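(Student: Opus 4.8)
The plan is to treat the two items separately: item (1) follows quickly from the commutator description of $\nabla_A$ just established, whereas item (2) rests on the Frobenius structure of $(B,\psi)$ together with the realness of $A$, and it is the latter where the real work lies.

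For (1), I would begin from the identity $\delta^2\iota^{-1}(\nabla_A x)=[\rho(x),A]$ proved just above and from $P_A=\iota\,\tilde{P_A}\,\iota^{-1}$ with $\tilde{P_A}=A\bullet(\cdot)$. Then $P_A\nabla_A=\nabla_A$ is equivalent to saying that each $[\rho(x),A]$ is fixed by the Schur multiplication $A\bullet(\cdot)$, i.e. $A\bullet[\rho(x),A]=[\rho(x),A]$. Since $P_A$ is a $B$-$B$-bimodule projection, its transport $\tilde{P_A}=A\bullet(\cdot)$ commutes with left and right multiplication by $\rho(B)$ on $B(L^2(\mathcal{G}))$, so $A\bullet(\rho(x)A)=\rho(x)(A\bullet A)$ and $A\bullet(A\rho(x))=(A\bullet A)\rho(x)$; the Schur idempotence $A\bullet A=A$ collapses these to $\rho(x)A$ and $A\rho(x)$, and subtracting gives $[\rho(x),A]$. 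The only ingredients are the preceding proposition, the bimodule property of $P_A$, and $A\bullet A=A$.

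For (2), I would write $\nabla_A=L-R$ with $L=\delta^{-2}(A^\dagger\otimes\id_B)m^\dagger$ and $R=\delta^{-2}(\id_B\otimes A)m^\dagger$, using the $B$-bimodule action $x(\xi_1\otimes\xi_2)y=(x\xi_1)\otimes(\xi_2 y)$. The Frobenius relations $m^\dagger(xy)=(x\otimes 1_B)m^\dagger(y)=m^\dagger(x)(1_B\otimes y)$, together with the fact that $\id_B\otimes A$ commutes with left multiplication on the first leg and $A^\dagger\otimes\id_B$ with right multiplication on the second leg, give the one-sided laws $R(xy)=x\,R(y)$ and $L(xy)=L(x)\,y$. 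Subtracting yields $\nabla_A(xy)=L(x)\,y-x\,R(y)$, so the Leibniz rule is equivalent to the single cross identity $R(x)\,y=x\,L(y)$. The vanishing on scalars is immediate and separate: $\nabla_A(1_B)=\delta^{-2}\big((A^\dagger\otimes\id_B)-(\id_B\otimes A)\big)m^\dagger(1_B)=0$ is exactly the realness relation \eqref{realness} evaluated on the cup $m^\dagger(1_B)$, and linearity extends this to all of $\C\cdot 1_B$.

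The cross identity $R(x)\,y=x\,L(y)$ is the step I expect to be the main obstacle, and it is where realness must enter. Written as an operator identity on $B\otimes B$ it reads
\[
(\id_B\otimes m)(\id_B\otimes A\otimes\id_B)(m^\dagger\otimes\id_B)=(m\otimes\id_B)(\id_B\otimes A^\dagger\otimes\id_B)(\id_B\otimes m^\dagger).
\]
I would verify it by pairing both sides against $\bra{u}\otimes\bra{v}$ in the inner product of $B\otimes B$ and collapsing the single comultiplication via the counit relations $(\psi\otimes\id_B)m^\dagger=\id_B=(\id_B\otimes\psi)m^\dagger$ and the Frobenius law, reducing the left side to $\delta^{-2}\psi\big(v^*A(u^*x)\,y\big)$ and the right side to $\delta^{-2}\psi\big(u^*x\,A^\dagger(y\,\sigma_{-i}(v^*))\big)$. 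The realness of $A$, which yields $\psi(A(a)b)=\psi(a\,A^\dagger(b))$ from \eqref{realness}, turns $A^\dagger$ back into $A$, and the modular identity $\psi(v^*w)=\psi(w\,\sigma_{-i}(v^*))$ matches the two scalars, both equal to $\delta^{-2}\psi\big(A(u^*x)\,y\,\sigma_{-i}(v^*)\big)$. In the tracial case $\sigma_{-i}=\id_B$ and the twist is invisible; tracking this modular automorphism is the genuine difficulty in the non-tracial setting, and since the cup/cusp diagram calculus is built to absorb it, I would ultimately present the identity diagrammatically, sliding $A$ across the merging vertex using the symmetry in \eqref{realness}.
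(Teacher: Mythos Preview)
Your argument is correct in both parts, but it takes a noticeably longer path than the paper does, and the detour is worth pointing out.

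The paper's proof hinges on a single preliminary identity: using realness \eqref{realness} one checks that the two halves of $\nabla_A$ are themselves $P_A$ applied to simple tensors,
\[
\delta^{-2}(A^\dagger\otimes\id_B)m^\dagger=P_A(1\otimes\cdot),\qquad
\delta^{-2}(\id_B\otimes A)m^\dagger=P_A(\cdot\otimes 1),
\]
so that $\nabla_A=P_A(1\otimes\cdot-\cdot\otimes 1)$. From this, (1) is immediate by $P_A^2=P_A$, and (2) drops out of the $B$-$B$-bimodule property of $P_A$ together with the telescoping $1\otimes xy-xy\otimes 1=(1\otimes x-x\otimes 1)y+x(1\otimes y-y\otimes 1)$.

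Your route is genuinely different. For (1) you transport through $\iota$ and use that $A\bullet(\cdot)$ is a $\rho(B)$-bimodule map; this is fine and equivalent to the paper's use of the bimodule property of $P_A$. For (2) you split $\nabla_A=L-R$, obtain the one-sided laws from Frobenius, and then spend the real effort on the cross identity $R(x)\,y=x\,L(y)$ by pairing and invoking realness plus the modular relation. This works, but note that the cross identity is \emph{exactly} what the paper's preliminary formula makes trivial: once $L=P_A(1\otimes\cdot)$ and $R=P_A(\cdot\otimes 1)$ are established, $R(x)\,y=P_A(x\otimes 1)y=P_A(x\otimes y)=x\,P_A(1\otimes y)=x\,L(y)$ is one line. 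So your computation with $\sigma_{-i}$ is re-deriving, in coordinates, the bimodule property of $P_A$ that the paper already has in hand. The upshot is that both proofs use the same ingredients (Frobenius, realness, bimodule property), but the paper packages them into the formula $\nabla_A=P_A(1\otimes\cdot-\cdot\otimes 1)$ first, which short-circuits your cross-identity calculation entirely.
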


\begin{proof}
$(1)$
Note that the real condition \eqref{realness} implies
\begin{align}
\begin{sd}
\node[circle,draw](A){$A^\dagger$};
\draw (A)to[out=-90,in=-90] coordinate[midway](m) 
		([xshift=\sep/2]A.south)
		--([xshift=\sep/2]A.north)--++(0,\sep/4)
		(A.north)--++(0,\sep/4)
		(m)--++(0,-\sep/4);
\end{sd}
&=
	\begin{sd}
	\node[circle,draw] (G) {$A$};
	\draw (G)to[out=-90,in=-90] coordinate[midway] (m*) 
		([xshift=-\sep/2]G.south);
	\draw (G)to[out=90,in=90] coordinate[midway] (m) 
		([xshift=\sep/2]G.north);
	\draw (m)--++(0,\sep/4) (m*)
		([xshift=-\sep/2]G.south)--++(0,\sep/2) 
		([xshift=\sep/2]G.north)--++(0,-\sep/2);
	\end{sd}
=\delta^2 P_A(1 \otimes \cdot);
&
\begin{sd}
\node[circle,draw](A){$A$};
\draw (A)to[out=-90,in=-90] coordinate[midway](m) 
		([xshift=-\sep/2]A.south)
		--([xshift=-\sep/2]A.north)--++(0,\sep/4)
		(A.north)--++(0,\sep/4)
		(m)--++(0,-\sep/4);
\end{sd}
&=
	\begin{sd}
	\node[circle,draw] (G) {$A$};
	\draw (G)to[out=-90,in=-90] coordinate[midway] (m*) 
		([xshift=-\sep/2]G.south);
	\draw (G)to[out=90,in=90] coordinate[midway] (m) 
		([xshift=\sep/2]G.north);
	\draw (m)--++(0,\sep/4) (m*)--++(0,-\sep/4) 
		([xshift=-\sep/2]G.south)--++(0,\sep/2) 
		([xshift=\sep/2]G.north) arc(90:-270:\unode);
	\end{sd}
=\delta^2 P_A(\cdot \otimes 1).
\end{align}
Thus we have $\nabla_A=P_A(1 \otimes \cdot-\cdot \otimes 1)$, and 
\[
P_A \nabla_A=P_A^2(1 \otimes \cdot-\cdot \otimes 1)=\nabla_A
\] 
by idempotence of $P_A$.

\noindent$(2)$ 
Now we have $\nabla_A 1= P_A(1 \otimes 1-1 \otimes 1)=0$.
It remains to show $\nabla_A(xy)=(\nabla_A x)y+x(\nabla_A y)$ for $x,y \in B$. 
Indeed by bimodule property $P_A(xzy)=x(P_A z)y$ for $x,y \in B, z \in B^{\otimes 2}$, we obtain
\begin{align}
\nabla_A(xy)
&= P_A(1 \otimes xy - xy \otimes 1)
\\
&= P_A(1 \otimes xy - x \otimes y + x \otimes y - x y \otimes 1)
\\
&= P_A((1 \otimes x - x \otimes 1) y + x (1 \otimes y - y \otimes 1))
\\
&=(\nabla_A x)y+x(\nabla_A y).
\end{align}
\end{proof}

\begin{dfn}[{Generalization of Ganesan \cite{Ganesan2023spectral} to directed graphs}]
Let $\mathcal{G}=(B,\psi,A)$ be a quantum graph.
We define the (left) indegree matrix $D_\mathrm{in}:B \to B$ and (right) outdegree matrix $D_\mathrm{out}:B \to B$ by 
\begin{align}
D_\mathrm{in}&=\lambda(A1_B)
=
\begin{sd}
	\path (\sep/3,0)node[circle,draw](A2){$A$};
	\draw (A2.north)to[out=90,in=90]coordinate[midway](m1)
		([xshift=0.5cm]A2.north) --++(0,-\sep/2) 
		(m1)--++(0,\sep/4)
		(A2)--++(0,-\sep/2)arc(-270:90:\unode);
\end{sd};
& D_\mathrm{out}&=\rho(A^\dagger 1_B)
=
\begin{sd}
	\path (\sep/3,0)node[circle,draw](A2){$A^\dagger$};
	\draw (A2.north)to[out=90,in=90]coordinate[midway](m1)
		([xshift=-0.5cm]A2.north) --++(0,-\sep/2) 
		(m1)--++(0,\sep/4)
		(A2)--++(0,-\sep/2)arc(-270:90:\unode);
\end{sd}
\overset{(\textrm{if }A\textrm{:real})}{=}
\begin{sd}
	\path (-\sep/3,0)node[circle,draw](A1){$A$};
	\draw (A1.south)to[out=-90,in=-90]coordinate[midway](m1)
		([xshift=-0.5cm]A1.south) --++(0,\sep/2) 
		(m1)--++(0,-\sep/4) 
		(A1)--++(0,\sep/2)arc(-90:270:\unode);
\end{sd}
\end{align}
where $\lambda$ (resp. $\rho$) is the left (resp. right) multiplication.
\end{dfn}

If $\mathcal{G}$ is undirected, then $D_\mathrm{out}=D_\mathrm{in}^*$ by
\[
D_\mathrm{in}^* x	 = ((A1) x^*)^*
= x (A1)^* = x (A^* 1) 
\overset{\textrm{(undirected)}}{=} x (A^\dagger 1)
=D_\mathrm{out} x.
\]
And $D_\mathrm{out}=D_\mathrm{in}=d \,\id_B$ if $\mathcal{G}$ is $d$-regular.

\begin{lem}\label{lem:specbound}
Let $\mathcal{G}=(B,\psi,A)$ be a real quantum graph. Then
\[
0 \leq \nabla_A^\dagger \nabla_A=\delta^{-2} \left(D_\mathrm{in} - A + D_\mathrm{out} - A^\dagger \right).
\]
Moreover if $\mathcal{G}$ is $d$-regular, 
\[
0 \leq \nabla_A^\dagger \nabla_A=2\delta^{-2} \left(d \, \id_B- \frac{A+A^\dagger}{2} \right).
\]
In particular 
$\frac{\theta A+\ol{\theta}A^\dagger}{2} \leq d \, \id_B$ for all $\theta \in \T$.
\end{lem}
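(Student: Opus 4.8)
The positivity $0\le\nabla_A^\dagger\nabla_A$ is automatic, since $\braket{x|\nabla_A^\dagger\nabla_A x}=\norm{\nabla_A x}^2\ge0$; the real content of the first two displays is the operator identities, and the final inequality then follows (though, as noted below, not quite formally). The plan is to expand $\nabla_A^\dagger\nabla_A$ head-on. From $\nabla_A=\delta^{-2}(A^\dagger\otimes\id_B-\id_B\otimes A)m^\dagger$ the GNS adjoint is $\nabla_A^\dagger=\delta^{-2}m(A\otimes\id_B-\id_B\otimes A^\dagger)$, so that
\[
\nabla_A^\dagger\nabla_A=\delta^{-4}\,m\bigl(A\otimes\id_B-\id_B\otimes A^\dagger\bigr)\bigl(A^\dagger\otimes\id_B-\id_B\otimes A\bigr)m^\dagger .
\]
Multiplying out the inner factor produces the four summands $AA^\dagger\otimes\id_B$, $-A\otimes A$, $-A^\dagger\otimes A^\dagger$, $\id_B\otimes A^\dagger A$; each one, placed between $\delta^{-2}m(\,\cdot\,)$ and $m^\dagger$, is by definition $\delta^{-2}$ times the corresponding Schur product.

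It remains to identify these four Schur products. The two cross terms collapse by Schur idempotence: $A\bullet A=A$ by hypothesis, and applying $(\,\cdot\,)^\dagger$ to this identity together with $(S\bullet T)^\dagger=S^\dagger\bullet T^\dagger$ gives $A^\dagger\bullet A^\dagger=A^\dagger$, so they contribute $-\delta^{-2}A$ and $-\delta^{-2}A^\dagger$. The two diagonal terms are the degree matrices, $(AA^\dagger)\bullet\id_B=D_\mathrm{in}$ and $\id_B\bullet(A^\dagger A)=D_\mathrm{out}$, and this is where realness enters. Realness \eqref{realness} is exactly what lets one bend the lower leg of $A^\dagger$ (resp.\ $A$) around the Frobenius cup, rewriting $(A^\dagger\otimes\id_B)m^\dagger$ and $(\id_B\otimes A)m^\dagger$ as $\delta^2P_A(1_B\otimes\,\cdot\,)$ and $\delta^2P_A(\,\cdot\,\otimes1_B)$; contracting the leg carrying $A^\dagger$ (resp.\ $A$) against the counit $\psi$ then reproduces precisely the diagrams defining $D_\mathrm{in}=\lambda(A1_B)$ and $D_\mathrm{out}=\rho(A^\dagger1_B)$. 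Assembling the pieces gives $\nabla_A^\dagger\nabla_A=\delta^{-2}(D_\mathrm{in}-A+D_\mathrm{out}-A^\dagger)$.

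The hard part is this diagonal identification, and especially the $D_\mathrm{out}$ term: contracting $(\id_B\otimes A)m^\dagger$ naively against $\psi$ yields right multiplication by $\sigma_{-i}\bigl((A^\dagger1_B)^*\bigr)$ rather than by $A^\dagger1_B$, and it is realness of $A$ — the ``if $A$ real'' equality recorded in the definition of $D_\mathrm{out}$ — that makes the modular twist disappear, so the bookkeeping of which leg is contracted and of the modular automorphisms must be done with care, whereas the cross terms are automatic. Granting the identity, the $d$-regular case is immediate: $D_\mathrm{in}=D_\mathrm{out}=d\,\id_B$ turns it into $\nabla_A^\dagger\nabla_A=2\delta^{-2}\bigl(d\,\id_B-\tfrac{A+A^\dagger}{2}\bigr)$, and since $d\in\R$ the right-hand side is self-adjoint, whence $\tfrac{A+A^\dagger}{2}\le d\,\id_B$.

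For the final clause one cannot simply replace $A$ by $\theta A$, which is no longer Schur idempotent and hence not an adjacency matrix; instead I would repeat the computation with a phase inserted. For $\theta\in\T$ put $\nabla^{(\theta)}\coloneqq\delta^{-2}(A^\dagger\otimes\id_B-\theta\,\id_B\otimes A)m^\dagger$. Because $\abs{\theta}^2=1$, the same four-term expansion (again using $A\bullet A=A$ and the two degree identities) yields $0\le(\nabla^{(\theta)})^\dagger\nabla^{(\theta)}=\delta^{-2}\bigl(D_\mathrm{in}-\theta A-\ol{\theta}A^\dagger+D_\mathrm{out}\bigr)$, which in the $d$-regular case becomes $2\delta^{-2}\bigl(d\,\id_B-\tfrac{\theta A+\ol{\theta}A^\dagger}{2}\bigr)\ge0$. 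Since $\tfrac{\theta A+\ol{\theta}A^\dagger}{2}$ is self-adjoint, this is exactly the asserted bound $\tfrac{\theta A+\ol{\theta}A^\dagger}{2}\le d\,\id_B$ for all $\theta\in\T$.
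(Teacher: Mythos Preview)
Your proof is correct and follows essentially the same route as the paper: expand $\nabla_A^\dagger\nabla_A$ into four Schur products, collapse the cross terms by $A\bullet A=A$ and $A^\dagger\bullet A^\dagger=A^\dagger$, identify the diagonal terms with $D_{\mathrm{in}},D_{\mathrm{out}}$, then for the last clause rerun the computation with a phase inserted in the gradient. Your $\nabla^{(\theta)}$ is exactly what the paper means by ``replacing $A$ by $\lambda A$ and $A^\dagger$ by $\ol\lambda A^\dagger$ in $\nabla_A$'' (with $\theta=\lambda^2$); you are right that this is a substitution in the operator $\nabla_A$, not a change of quantum graph, and the paper's phrasing is shorthand for precisely your computation.

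One small expository point: your account of the diagonal terms attributes the identity $(AA^\dagger)\bullet\id_B=D_{\mathrm{in}}$ entirely to realness, but Schur idempotence is needed there as well. In the paper's diagram manipulation, realness converts the $A^\dagger$ into an $A$ and rearranges the strings so that an $A\bullet A$ sub-diagram appears; it is the collapse $A\bullet A=A$ that then yields the $D_{\mathrm{in}}$ diagram. (Classically: $(AA^\dagger)\bullet\id$ has diagonal entries $\sum_j|A_{ij}|^2$, which equals $\sum_j A_{ij}$ only because $A_{ij}\in\{0,1\}$.) Your description via $P_A(1_B\otimes\cdot)$ and ``contracting against the counit'' is a bit loose on this step, though the conclusion is correct.
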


\begin{proof}
We can compute directly
\begin{align}
\nabla^\dagger \nabla 
&=\delta^{-4} m(A \otimes \id - \id \otimes A^\dagger)(A^\dagger \otimes \id - \id \otimes A)m^\dagger
\\
&=
\delta^{-4}
\left(
\begin{sd}
	\coordinate (H1) at (0,-3/5*\sep) ;
	\coordinate (H3) at (0,3/5*\sep);
	\path (-\sep/3,0)node[draw](A1){$AA^\dagger$} 
		(\sep/3,0)node(A2){$\phantom{A}$};
	\draw (A2.south) to[out=-90,in=-90] coordinate[midway] (m2) (A1.south) ;
	\draw (A2.north) to[out=90,in=90] coordinate[midway] (m3) (A1.north) ;
	\draw (m2)--(H1) (m3)--(H3) (A2.north)--(A2.south);
\end{sd}
+
\begin{sd}
	\coordinate (H1) at (0,-3/5*\sep) ;
	\coordinate (H3) at (0,3/5*\sep);
	\path (-\sep/3,0)node(A1){$\phantom{A}$} 
		(\sep/3,0)node[draw](A2){$A^\dagger A$};
	\draw (A2.south) to[out=-90,in=-90] coordinate[midway] (m2) (A1.south) ;
	\draw (A2.north) to[out=90,in=90] coordinate[midway] (m3) (A1.north) ;
	\draw (m2)--(H1) (m3)--(H3) (A1.north)--(A1.south);
\end{sd}
-
\begin{sd}
	\coordinate (H1) at (0,-3/5*\sep) ;
	\coordinate (H3) at (0,3/5*\sep);
	\path (-\sep/3,0)node[circle,draw](A1){$A$} 
		(\sep/3,0)node[circle,draw](A2){$A$};
	\draw (A2.south) to[out=-90,in=-90] coordinate[midway] (m2) (A1.south) ;
	\draw (A2.north) to[out=90,in=90] coordinate[midway] (m3) (A1.north) ;
	\draw (m2)--(H1) (m3)--(H3);
\end{sd}
-
\begin{sd}
	\coordinate (H1) at (0,-3/5*\sep) ;
	\coordinate (H3) at (0,3/5*\sep);
	\path (-\sep/3,0)node[circle,draw](A1){$A^\dagger$} 
		(\sep/3,0)node[circle,draw](A2){$A^\dagger$};
	\draw (A2.south) to[out=-90,in=-90] coordinate[midway] (m2) (A1.south) ;
	\draw (A2.north) to[out=90,in=90] coordinate[midway] (m3) (A1.north) ;
	\draw (m2)--(H1) (m3)--(H3);
\end{sd}
\right)
\\
&=
\delta^{-2}
\left(
\delta^{-2}
\begin{sd}
	\coordinate (H1) at (0,-3/5*\sep) ;
	\coordinate (H3) at (0,3/5*\sep);
	\path (-\sep/3,0)node[circle,draw](A1){$A$} 
		(\sep/3,0)node[circle,draw](A2){$A$};
	\draw (A2.south) to[out=-90,in=-90] coordinate[midway] (m2) (A1.south) ;
	\draw (A2.north) to[out=90,in=90] coordinate[midway] (m3) (A1.north) ;
	\draw (m3)to[out=90,in=90]coordinate[midway](m1)
		([xshift=0.3cm]A2.north) --++(0,-\sep/2) 
		(m1)--++(0,\sep/4);
\end{sd}
+
\delta^{-2}
\begin{sd}
	\coordinate (H1) at (0,-3/5*\sep) ;
	\coordinate (H3) at (0,3/5*\sep);
	\path (-\sep/3,0)node[circle,draw](A1){$A$} 
		(\sep/3,0)node[circle,draw](A2){$A$};
	\draw (A2.south) to[out=-90,in=-90] coordinate[midway] (m2) (A1.south) ;
	\draw (A2.north) to[out=90,in=90] coordinate[midway] (m3) (A1.north) ;
	\draw (m2)to[out=-90,in=-90]coordinate[midway](m1)
		([xshift=-0.3cm]A1.south) --++(0,\sep/2) 
		(m1)--++(0,-\sep/4);
\end{sd}
-A-A^\dagger
\right)
\\
&=
\delta^{-2}
\left(
\begin{sd}
	\path (\sep/3,0)node[circle,draw](A2){$A$};
	\draw (A2.north)to[out=90,in=90]coordinate[midway](m1)
		([xshift=0.5cm]A2.north) --++(0,-\sep/2) 
		(m1)--++(0,\sep/4)
		(A2)--++(0,-\sep/2)arc(-270:90:\unode);
\end{sd}
+
\begin{sd}
	\path (-\sep/3,0)node[circle,draw](A1){$A$};
	\draw (A1.south)to[out=-90,in=-90]coordinate[midway](m1)
		([xshift=-0.5cm]A1.south) --++(0,\sep/2) 
		(m1)--++(0,-\sep/4) 
		(A1)--++(0,\sep/2)arc(-90:270:\unode);
\end{sd}
-A-A^\dagger
\right)
\\
&=\delta^{-2} \left(D_\mathrm{in} - A + D_\mathrm{out} - A^\dagger \right).
\end{align}
If it is $d$-regular, then $D_\mathrm{out}=D_\mathrm{in}=d \,\id_B$ yields
\[
\nabla^\dagger \nabla 
=2\delta^{-2} \left(d \, \id_B- \frac{A+A^\dagger}{2} \right).
\]
Replacing $A$ by $\lambda A$ and $A^\dagger$ by $\ol{\lambda} A^\dagger$ in $\nabla_A$, we deduce  
$d \, \id_B - \frac{\lambda^2 A+\ol{\lambda}^2 A^\dagger}{2} \geq 0$ for any $\lambda \in \T$. Since $\theta=\lambda^2$ ranges all $\theta \in \T$, we obtain $\frac{\theta A+\ol{\theta}A^\dagger}{2} \leq d \, \id_B$.
\end{proof}

\begin{rmk}
Ganesan \cite{Ganesan2023spectral} defined the graph Laplacian $L$ by $L=D-A$ for undirected quantum graphs with right degree matrix $D=D_\mathrm{out}$.
In this case, our Laplacian $\Delta\coloneqq\delta^{2}\nabla^\dagger \nabla=L^* +L$ is a `double' of usual Laplacian.
Usually, the gradient of an undirected classical graph is defined by the gradient as in Lemma \ref{lem:classicalgrad} of an orientation (i.e., a half) of the original graph. This is why we obtained the doubled Laplacian, and such duplication is inevitable because quantum graphs do not always have an orientation as shown below. 
\end{rmk}

\begin{dfn}
Let $\mathcal{G}=(B,\psi,A)$ be an undirected quantum graph. We say that a Schur projection $T: B\to B$ is an orientation of $\mathcal{G}$ if $A\bullet T=T$, $T\bullet T^\dagger=0$, and $\ran (T\bullet \cdot)+\ran (T^\dagger \bullet \cdot)=\ran (A\bullet \cdot)$. 
\end{dfn}
Note that this definition is equivalent to $A=T+T^\dagger$ if $T^\dagger$ is also a Schur projection.
The definition states that $T$ is a directed subgraph (edge subset) of $A$ over the same quantum set, $T^\dagger$ is the opposite orientation of $T$, and $T$ and $T^\dagger$ disjointly cover $A$.

If $\mathcal{G}$ is non-tracial, then the GNS adjoint $T^\dagger$ is not always real, hence not necessarily a Schur projection and $A=T+T^\dagger$ may not hold. 
To avoid such a problem, we can instead consider 
a KMS symmetric quantum graph $\mathcal{G}$ and the KMS adjoint $T^\ddagger$ to define an orientation simply by $A=T+T^\ddagger$.

\begin{ex}[A non-orientable quantum graph]
Consider $1$-regular irreflexive undirected quantum graph $\mathcal{G}=(M_2, \tau=\Tr/2, A=2E_{\C^2}-\id_{M_2}:\begin{pmatrix}a & b \\ c & d \end{pmatrix}
\mapsto \begin{pmatrix}a & -b \\ -c & d \end{pmatrix})$
(c.f.~\cite{Gromada2022some,Matsuda2022classification}),
 where $E_{\C^2}$ is the conditional expectation onto the diagonal subalgebra.
Its corresponding projection 
\[
p_A=\frac{1}{2}\begin{pmatrix}
1&0&0&-1\\
0&0&0&0\\
0&0&0&0\\
-1&0&0&1
\end{pmatrix}
\in M_2^\op \otimes M_2 =M_4
\]
is rank one, hence it does not have an orientation $T$ whose corresponding projection $p_T$ must satisfy $p_A=p_T+p_{T^\dagger}$.
\end{ex}

\subsection{Spectral bound by the degree}

\begin{prop}\label{prop:specrad=d}
Let $\mathcal{G}$ be a $d$-regular real quantum graph. The spectral radius $r(A)$ of the adjacency operator satisfies $r(A)=d$.
\end{prop}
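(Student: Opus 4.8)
The plan is to establish the two inequalities $r(A) \geq d$ and $r(A) \leq d$ separately, with the upper bound resting entirely on the operator inequality furnished by Lemma \ref{lem:specbound}. Since $A$ acts on the finite-dimensional space $L^2(\mathcal{G})$, its spectrum consists of eigenvalues and $r(A) = \max\{|\lambda| : \lambda \in \Spec(A)\}$, so it suffices to control each eigenvalue modulus.

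For the lower bound, I would observe that $d$-regularity gives $A 1_B = d\,1_B$, so that $d$ is an eigenvalue of $A$ with eigenvector $1_B$. Because Lemma \ref{lem:deg<dim} guarantees $d \geq 0$, this already yields $r(A) \geq |d| = d$.

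For the upper bound, fix an eigenvalue $\lambda \in \Spec(A)$ with a normalized eigenvector $x$, so $Ax = \lambda x$. I would then test $x$ against the inequality $\frac{\theta A + \ol{\theta} A^\dagger}{2} \leq d\,\id_B$, valid for all $\theta \in \T$ by Lemma \ref{lem:specbound}. Evaluating the quadratic form, one has $\braket{x|Ax} = \lambda$ and $\braket{x|A^\dagger x} = \braket{Ax|x} = \ol{\lambda}$, so the inequality collapses to $\Re(\theta\lambda) = \frac{\theta\lambda + \ol{\theta}\ol{\lambda}}{2} \leq d$ for every $\theta \in \T$. Optimizing the phase by taking $\theta = \ol{\lambda}/|\lambda|$ when $\lambda \neq 0$ (and noting the bound is trivial when $\lambda = 0$) turns this into $|\lambda| \leq d$. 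As $\lambda$ was arbitrary, $r(A) \leq d$, and combining with the lower bound gives $r(A) = d$.

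There is no serious obstacle here: Lemma \ref{lem:specbound}, which encodes the positivity of the graph Laplacian $\nabla_A^\dagger \nabla_A$, already does the heavy lifting. The remaining work is just recognizing that this self-adjoint operator bound, tested against an eigenvector, controls $\Re(\theta\lambda)$, after which the phase optimization over $\theta \in \T$ converts it into the modulus bound $|\lambda| \leq d$. The only points requiring minor care are the sign input $d \geq 0$ from Lemma \ref{lem:deg<dim} for the lower bound and the separate (trivial) treatment of the eigenvalue $\lambda = 0$.
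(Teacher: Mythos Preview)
Your proof is correct and follows essentially the same route as the paper: both arguments test the operator inequality of Lemma \ref{lem:specbound} against a unit eigenvector and optimize the phase $\theta \in \T$ to convert $\Re(\theta\lambda)\le d$ into $|\lambda|\le d$, then invoke $d\in\Spec(A)$ for the reverse inequality. Your explicit appeal to Lemma \ref{lem:deg<dim} for $d\ge 0$ makes overt a step the paper leaves implicit.
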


\begin{proof}
For a nonzero $\lambda \in \Spec(A)$ and a unit eigenvector $x \in \ker(\lambda \,\id_B -A)$, 
choose $\theta \in \T$ so that $\theta\lambda=\abs{\lambda}$.
Then Lemma \ref{lem:specbound} shows
\[
d = d \braket{x|x} \geq  \frac{\braket{x| \theta A+\ol{\theta}A^\dagger |x}}{2}
= \frac{\theta \braket{x|Ax}+\ol{\theta}\braket{Ax|x}}{2}
= \frac{\theta \lambda + \ol{\theta \lambda}}{2}
=\abs{\lambda}.
\]
Thus $r(A) = \sup_{\lambda \in \Spec(A)} \abs{\lambda} \leq d$.
Since $d \in \Spec(A)$, we have $r(A)=d$.
\end{proof}

\begin{thm}\label{thm:norm=deg}
Let $\mathcal{G}=(B,\psi,A)$ be a $d$-regular quantum graph. Then the identity of the operator norm on $B(L^2(\mathcal{G}))$ and the degree
\[
\norm{A}_\op=d
\]
 holds if either of the following is satisfied:
\begin{description}
\item[$(1)$]
$\mathcal{G}$ is undirected, whence $\Spec(A)\subset [-d,d]$;
\item[$(2)$]
both $A$ and $A^\dagger$ are real;
\item[$(3)$]
$\mathcal{G}$ is real and tracial, i.e., $A$ is real and $\psi=\tau_B$.
\end{description}
\end{thm}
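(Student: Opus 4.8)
The plan is to establish the upper bound $\norm{A}_\op \le d$ in each case, since the matching lower bound is uniform and elementary. In all three cases $A$ is real, so $d \ge 0$ by Lemma \ref{lem:deg<dim}, and $1_B$ is a unit vector with $A 1_B = d 1_B$; hence $\norm{A}_\op \ge \norm{A 1_B}_{L^2} = d$. If $d = 0$ then $A = 0$ by Lemma \ref{lem:deg<dim}, so the statement is trivial and I may assume $d > 0$. The content is therefore the reverse inequality, and I would first extract what Lemma \ref{lem:specbound} already provides: for a unit vector $x$ and $\theta \in \T$, the operator inequality $\tfrac{\theta A + \ol\theta A^\dagger}{2} \le d\,\id_B$ evaluated at $x$ reads $\Re\!\big(\theta \braket{x|Ax}\big) \le d$, and taking the supremum over $\theta$ gives $\abs{\braket{x|Ax}} \le d$, i.e.\ the numerical radius satisfies $w(A) \le d$.

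This observation disposes of case $(1)$ at once: when $\mathcal{G}$ is undirected, $A = A^\dagger$ is self-adjoint, hence normal, so $\norm{A}_\op = r(A) = d$ by Proposition \ref{prop:specrad=d} (equivalently $\norm{A}_\op = w(A) \le d$). The spectral containment $\Spec(A) \subset [-d,d]$ then follows from Lemma \ref{lem:specbound} with $\theta = \pm 1$, which yields $-d\,\id_B \le A \le d\,\id_B$.

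The main obstacle is case $(2)$, where $A$ is genuinely directed and need not be normal, so the bound $w(A)\le d$ no longer controls $\norm{A}_\op$. Here I would not try to bound $A$ directly but instead pass to the positive self-adjoint operator $A^\dagger A$ and use the $C^*$-identity $\norm{A}_\op^2 = \norm{A^\dagger A}_\op = r(A^\dagger A)$. The key point is that $\Phi \coloneqq d^{-2}A^\dagger A$ is a \emph{unital, $\psi$-preserving, completely positive} map: both $A$ and $A^\dagger$ are Schur idempotent (the latter since $(A\bullet A)^\dagger = A^\dagger \bullet A^\dagger$), so by the stated equivalence of realness and complete positivity both are CP, and hence so is their composition; $\Phi(1_B) = d^{-2}A^\dagger(d 1_B) = 1_B$ by $d$-regularity; and $\psi\circ\Phi = \psi$ because $A 1_B = A^\dagger 1_B = d 1_B$ forces $\psi\circ A = \psi\circ A^\dagger = d\psi$. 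The Kadison--Schwarz inequality for the unital CP map $\Phi$ gives $\Phi(x)^*\Phi(x) \le \Phi(x^*x)$, whence for every $x \in B$,
\[
\norm{\Phi x}_{L^2}^2 = \psi\big(\Phi(x)^*\Phi(x)\big) \le \psi\big(\Phi(x^*x)\big) = \psi(x^*x) = \norm{x}_{L^2}^2 .
\]
Thus $\Phi$ is an $L^2$-contraction, so $r(A^\dagger A) = \norm{A^\dagger A}_\op \le d^2$, giving $\norm{A}_\op \le d$.

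Finally, case $(3)$ reduces to case $(2)$: when $\psi$ is tracial the modular automorphisms $\sigma_z$ are trivial, so the real operator $A$ commutes with them, and by the remark following \eqref{realness} this forces $A^\dagger$ to be real as well; hence both $A$ and $A^\dagger$ are real and case $(2)$ applies verbatim. I expect the only delicate step to be the correct packaging of case $(2)$: the essential move is to transfer the norm estimate from the possibly non-normal $A$ to the positive map $A^\dagger A$, where the genuinely noncommutative input is the Kadison--Schwarz inequality together with the principle that a unital $\psi$-preserving positive map is automatically an $L^2$-contraction.
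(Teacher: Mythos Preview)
Your proof is correct. Cases $(1)$ and $(3)$ match the paper's argument, but for case $(2)$ you take a genuinely different route. The paper embeds $A$ into the undirected ``bipartite double'' $\tilde A = \begin{pmatrix} 0 & A \\ A^\dagger & 0 \end{pmatrix}$ on $(B\otimes\C^2,\psi\otimes\tau_{\C^2})$, verifies that $\tilde A$ is a $d$-regular undirected real quantum graph, applies case $(1)$ to get $\norm{\tilde A}_\op=d$, and then reads off $\norm{A}_\op\le d$ by restricting to an off-diagonal block. Your argument instead exploits the equivalence of realness and complete positivity directly: since $A$ and $A^\dagger$ are real Schur idempotents they are CP, so $\Phi=d^{-2}A^\dagger A$ is a unital $\psi$-preserving CP map, and Kadison--Schwarz makes it an $L^2$-contraction, whence $\norm{A^\dagger A}_\op\le d^2$. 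Your approach is arguably more economical---it avoids constructing and verifying the auxiliary quantum graph structure---while the paper's doubling trick has the conceptual payoff of reducing the directed case to the undirected one and is the same construction that reappears implicitly in the bipartite analysis later on.
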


\begin{proof}
$(1)$ Since $A$ is normal $AA^\dagger=A^\dagger A$, Proposition \ref{prop:specrad=d} implies 
$\norm{A}_\op=r(A)=d$. 
Thus self-adjointness shows $\Spec(A)\subset [-d,d]$.

\noindent$(2)$ We prove this by embedding $A$ into an undirected $d$-regular quantum graph 
\[
\left( B\otimes \C^2, \tilde{\psi}=\psi \otimes \tau_{\C^2}, 
\tilde{A} \coloneqq A\otimes E_{12}+A^\dagger \otimes E_{21} \right)
=
\left( B\oplus B, \frac{\psi \oplus \psi}{2}, \begin{pmatrix}
0 & A \\ A^\dagger & 0
\end{pmatrix} \right)
\]
 where $E_{ij}$ are matrix units in $M_2$.
By definition $\tilde{A}$ is self-adjoint. 
Note that $A,A^\dagger$ are quantum graphs on $(B,\psi)$ and $E_{ij}$ are (quantum) graphs on $(\C^2,\tau_{\C^2})$.
Then $A \otimes E_{12}$ and $A^\dagger \otimes E_{21}$ are quantum graphs on $(B\otimes\C^2,\psi \otimes \tau_{\C^2})$. Since the Schur product of $E_{12}$ and $E_{21}$ is zero, 
$\tilde{A} = A \otimes E_{12} + A^\dagger \otimes E_{21}$
 is also a quantum graph.

By assumption, $A$ and $A^\dagger$ are real. So are $A \otimes E_{12}$ and $A^\dagger \otimes E_{21}$, hence $\tilde{A}$ is real.

The regularity follows from 
$\tilde{A}(1_B \otimes 1_{\C^2})=d1_B\otimes e_1 +d1_B\otimes e_2 
=d (1_B \otimes 1_{\C^2})$.

Therefore we have
\[
d=\norm{\tilde{A}}_{B(L^2(B\otimes\C^2))}
\geq \norm{\tilde{A}|_{L^2(B)\otimes e_2 \to L^2(B)\otimes e_1}}
= \norm{A}_{B(L^2(B))}
\]
via isometric identifications 
$L^2(B) \ni x \mapsto x \otimes \sqrt{2}e_i \in L^2(B)\otimes e_i$ for $i=1,2$. 
By $d \in\Spec(A)$, we obtain $\norm{A}=d$.

\noindent$(3)$ By traciality, $A^\dagger$ is also real. Thus $(3)$ follows from $(2)$.
\end{proof}

\begin{cor}\label{thm:KMSnorm=deg}
Let $\mathcal{G}=(B,\psi,A)$ be a $d$-regular quantum graph. 
Then we have the identity of the degree and the operator norm with respect to the KMS inner product on $B$:
\[
\norm{A}_\op=d.
\]
\end{cor}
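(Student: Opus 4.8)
The plan is to transport the proof of Theorem~\ref{thm:norm=deg} from the GNS to the KMS inner product, exploiting the one feature that distinguishes the KMS adjoint from the GNS adjoint: by \eqref{Treal=Tddreal}, $A^\ddagger$ is real whenever $A$ is. The lower bound costs nothing in either inner product, since $A1_B=d1_B$ gives $\norm{A}_{\mathrm{KMS}}\ge\norm{A1_B}_{\mathrm{KMS}}/\norm{1_B}_{\mathrm{KMS}}=\abs{d}=d$ (recall $d\ge 0$ by Lemma~\ref{lem:deg<dim}), so the whole content is the upper bound $\norm{A}_{\mathrm{KMS}}\le d$.

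For the upper bound I would mimic the doubling device of Theorem~\ref{thm:norm=deg}(2), but with the KMS adjoint in place of the GNS adjoint. Set
\[
\tilde{\mathcal G}=\Bigl(B\otimes\C^2,\ \psi\otimes\tau_{\C^2},\ \tilde A\coloneqq A\otimes E_{12}+A^\ddagger\otimes E_{21}\Bigr).
\]
Because $\tau_{\C^2}$ is tracial, the modular automorphism of $\psi\otimes\tau_{\C^2}$ acts only on the $B$-leg, so the KMS adjoint on $B\otimes\C^2$ is $\ddagger\otimes\dagger$; hence $\tilde A^\ddagger=A^\ddagger\otimes E_{21}+A\otimes E_{12}=\tilde A$, i.e.\ $\tilde A$ is KMS self-adjoint. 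As in the theorem, $E_{12}\bullet E_{21}=0$ together with $A\bullet A=A$ and the same Schur-product bookkeeping give $\tilde A\bullet\tilde A=\tilde A$, while $\tilde A(1_B\otimes 1_{\C^2})=d\,1_B\otimes 1_{\C^2}$ (using $A^\ddagger 1_B=\sigma_{i/2}A^\dagger\sigma_{-i/2}1_B=d1_B$) shows $\tilde A$ is $d$-regular. The realness-preservation \eqref{Treal=Tddreal} is exactly what makes $\tilde A$ real: when $A$ is real so is $A^\ddagger$, hence $\tilde A^*=\tilde A$, and $\tilde{\mathcal G}$ is a genuine $d$-regular, KMS-symmetric quantum graph.

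Now, since $\tilde A$ is self-adjoint for the KMS inner product it is normal there, so its KMS operator norm equals its spectral radius $r(\tilde A)$; and the spectral radius is an algebraic quantity independent of the inner product, so Proposition~\ref{prop:specrad=d}, applied to the real $d$-regular graph $\tilde{\mathcal G}$, yields $r(\tilde A)=d$. Finally, the embeddings $L^2_{\mathrm{KMS}}(B)\ni x\mapsto x\otimes\sqrt2\,e_i\in L^2_{\mathrm{KMS}}(B\otimes\C^2)$ realise $A$ as the corner $\tilde A\colon L^2(B)\otimes e_2\to L^2(B)\otimes e_1$, whence $\norm{A}_{\mathrm{KMS}}\le\norm{\tilde A}_{\mathrm{KMS}}=d$. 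Combined with the lower bound this gives $\norm{A}_{\mathrm{KMS}}=d$.

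I expect the main obstacle to be the modular bookkeeping hidden inside the two reductions above. First, the identity $r(\tilde A)=d$ ultimately rests on the KMS analogue of Lemma~\ref{lem:specbound}: one must redo the graph-gradient estimate $\nabla^\ddagger\nabla\ge 0$ with KMS adjoints, and there the realness relation \eqref{realness} has to be replaced by its cusped KMS form, so the flip-invariance of the cusp and the relation $A^\ddagger=\sigma_{i/2}A^\dagger\sigma_{-i/2}$ must be tracked carefully to recover the clean expression $\delta^2\nabla^\ddagger\nabla=2(d\,\id_B-\tfrac{A+A^\ddagger}{2})$; this is precisely the step where realness (guaranteed for $\tilde A$ by \eqref{Treal=Tddreal}) is consumed. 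Second, one should verify that the embeddings $x\mapsto x\otimes\sqrt2\,e_i$ are genuinely isometric for the KMS inner products and that the $\C^2$-leg decouples, since every inner product in this argument is the KMS one rather than the GNS one used in Theorem~\ref{thm:norm=deg}.
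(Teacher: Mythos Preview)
Your proof is correct and follows the same approach as the paper's: the paper's proof simply observes that realness of $A$ implies realness of $A^\ddagger$ via \eqref{Treal=Tddreal}, invokes Proposition~\ref{prop:specrad=d} for $r(A)=d$, and then says ``by the same argument as in the proof of Theorem~\ref{thm:norm=deg}''---i.e., exactly the KMS doubling construction you spell out in detail.

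One remark: your final paragraph of anticipated obstacles is unnecessary and in fact contradicts what you already argued correctly. You do \emph{not} need a KMS analogue of Lemma~\ref{lem:specbound}. As you yourself noted, the spectral radius is inner-product-independent, so Proposition~\ref{prop:specrad=d}---whose proof uses the GNS gradient---applies verbatim to the real $d$-regular quantum graph $\tilde{\mathcal G}$ and gives $r(\tilde A)=d$. Then KMS self-adjointness of $\tilde A$ (hence normality in the KMS Hilbert space) converts this into $\norm{\tilde A}_{\mathrm{KMS}}=d$. No cusped diagram chase is required. The isometry check for $x\mapsto x\otimes\sqrt2\,e_i$ is routine since the modular group of $\psi\otimes\tau_{\C^2}$ acts trivially on the $\C^2$-leg.
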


\begin{proof}
By \eqref{Treal=Tddreal}, the realness of $A$ implies that $A^\ddagger$ is also real. Thus we have the KMS version of Theorem \ref{thm:norm=deg} (2): both $A$ and $A^\ddagger$ are real.
Since the spectral radius does not depend on the inner product structure, we have $r(A)=d$ by Proposition \ref{prop:specrad=d}.
Therefore by the same argument as in the proof of Theorem \ref{thm:norm=deg}, 
we obtain $\norm{A}_\op=d$ over the KMS Hilbert space $B$.
\end{proof}

\begin{cor}
Let $\mathcal{G}=(B,\psi,A)$ be a $d$-regular undirected irreflexive quantum graph. Then $\Spec(A)\subset [-d,d]$ and $0 \leq d \leq \delta^2-1$.
Equivalently if $\mathcal{G}$ is a $d$-regular undirected reflexive quantum graph, then $\Spec(A)\subset [-d+2,d]$ and $1 \leq d \leq \delta^2$.
\end{cor}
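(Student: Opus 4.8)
The plan is to reduce everything to the projection picture in $B^\op \otimes B$ and to the results already proved. The spectral inclusion and the lower degree bound come for free: since $\mathcal{G}$ is undirected, Theorem \ref{thm:norm=deg}(1) already gives $\Spec(A) \subset [-d,d]$, and Lemma \ref{lem:deg<dim} gives $0 \le d$. So the only genuinely new content in the irreflexive case is the sharpened upper bound $d \le \delta^2 - 1$, and the reflexive statement will then follow from the irreflexive one by a spectral shift.

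For the upper bound I would work with the projection $p_A \in B^\op \otimes B$ associated with $A$ under the $*$-isomorphism that sends the Schur product to multiplication. Under this isomorphism the trivial graph $\id_B$ corresponds to a projection $p_{\id}$, and irreflexivity $A \bullet \id = 0$ translates exactly into $p_A p_{\id} = 0$. Since both are self-adjoint projections in the $C^*$-algebra $B^\op\otimes B$, the vanishing of their product forces $p_A + p_{\id}$ to again be a projection, whence $p_A + p_{\id} \le p_J = 1 \otimes 1$. Applying the faithful state $\psi^{\otimes 2}$ and using the identity $d = \psi^{\otimes 2}(p_A)$ established in the proof of Lemma \ref{lem:deg<dim} — together with $\psi^{\otimes 2}(p_{\id}) = 1$, which is just the degree of the $1$-regular trivial graph $T(B,\psi)$ — I obtain $d + 1 = \psi^{\otimes 2}(p_A + p_{\id}) \le \psi^{\otimes 2}(p_J) = \delta^2$, i.e. $d \le \delta^2 - 1$.

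For the reflexive case I would pass to the complement $A' := A - \id_B$. Reflexivity $A \bullet \id = \id$ means $p_{\id} \le p_A$, so $p_{A'} = p_A - p_{\id}$ is again a projection, and $A'$ is an undirected irreflexive quantum graph that is $(d-1)$-regular (from $A' 1_B = (d-1)1_B$). The already-established irreflexive case then yields $\Spec(A') \subset [-(d-1), d-1]$ and $0 \le d-1 \le \delta^2 - 1$. Shifting back via $A = A' + \id_B$ gives $\Spec(A) \subset [-d+2, d]$ and $1 \le d \le \delta^2$, as claimed.

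The only delicate point — and the step I would verify most carefully — is the translation dictionary between Schur operations on $A$ and algebraic operations on $p_A$: that $\bullet$ becomes multiplication, that Schur projections become orthogonal projections, and that $\psi^{\otimes 2}$ is an order-preserving state with $\psi^{\otimes 2}(p_A) = d$. All of these are recorded or immediate from the earlier material, so once the dictionary is in place the inequalities are routine and there is no genuine analytic obstacle.
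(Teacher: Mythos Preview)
Your proof is correct and follows essentially the same approach as the paper: the paper obtains the upper bound $d \le \delta^2 - 1$ by forming the reflexive version $(B,\psi,A+\id)$, which is a $(d+1)$-regular undirected quantum graph, and applying Lemma~\ref{lem:deg<dim} to it, while you unpack that same step directly in the projection picture via $p_A + p_{\id} \le p_J$; the reflexive case via the shift $A \mapsto A - \id$ is identical to the paper's.
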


\begin{proof}
If $\mathcal{G}$ is irreflexive, then $\Spec A \subset [-d,d]$ follows from Theorem \ref{thm:norm=deg}. Its reflexive version is given by $(B,\psi,A+\id)$ as a $(d+1)$-regular undirected quantum graph, hence Lemma \ref{lem:deg<dim} shows that $0 \leq d \leq \delta^2-1$. 
If $\mathcal{G}$ is reflexive, we may replace $d$ in the previous argumant by $d-1$ and obtain $\Spec (A-\id) \subset [-d+1,d-1]$, i.e., $\Spec A \subset [-d+2,d]$, and $1 \leq d \leq \delta^2$.
\end{proof}

\begin{open*}
In view of the above, we wonder if $\norm{A}_\op=d$ holds with a weaker assumption with respect to the GNS inner product.

Although we showed that some irreflexive quantum graphs do not admit an orientation, there may be a better definition that makes any irreflexive undirected quantum graphs orientable.

As we have the quantum graph Laplacians $\Delta=\delta^2 \nabla^\dagger \nabla$ and $L$, it is natural to consider a quantum Markov semigroup $e^{-t \Delta}$, which is the heat semigroup over the quantum graph. 
We leave it as an open question for future work to investigate the property of $e^{-t \Delta}$ such as the complete logarithmic Sobolev inequality (cf.~\cite{Brannan2022complete}).
\end{open*}

\section{Characterization of graph properties}

In this section, we introduce graph homomorphisms respecting the adjacency matrices and define the connectedness and bipartiteness of quantum graphs in terms of graph homomorphisms. After that, we give algebraic characterizations of these properties for regular quantum graphs. 

\subsection{Graph properties defined by homomorphisms}

\begin{dfn}\label{dfn:adjhom}
Let $\mathcal{G} = (B,\psi,A), \mathcal{G}' = (B',\psi',A')$ be quantum graphs.
A graph homomorphism $f^\op : \mathcal{G} \to \mathcal{G}'$ is a unital $*$-homomorphism $f: B' \to B$ satisfying $A' \bullet (f^\dagger A f) =f^\dagger A f$.

We say that $f^\op$ is surjective if $f$ is injective, and $f^\op$ is injective if $f$ is surjective.
\end{dfn}

This definition states that the pushforward $f^\dagger A f$ of the adjacency matrix of $\mathcal{G}$ is in the edges of $\mathcal{G}'$.

\begin{dfn}
Let $\mathcal{G}$ be a quantum graph.
\begin{itemize}
\item
 $\mathcal{G}$ is \emph{disconnected} if there is a surjective graph homomorphism $\mathcal{G} \to T_2$;
\item
 $\mathcal{G}$ is \emph{connected} if it is not disconnected, i.e., there is no surjective graph homomorphism $\mathcal{G} \to T_2$;
\item
 $\mathcal{G}$ is \emph{bipartite} if there is a surjective graph homomorphism $\mathcal{G} \to K_2$;
\item
 $\mathcal{G}$ \emph{has a bipartite component} if there is a  graph homomorphism $\mathcal{G} \to K_2 \sqcup T_1$ that is onto $K_2$, i.e., 
there is a unital $*$-homomorphism $f: \C^2 \oplus \C \to B$ satisfying 
$\begin{pmatrix}
0 & 1 & 0 \\ 1 & 0 & 0 \\ 0 & 0 & 1
\end{pmatrix}  \bullet f^\dagger A f = f^\dagger A f$ and $f$ is injective on $\C^2 \oplus 0$.
\end{itemize}
\end{dfn}

If $\mathcal{G}=(V,E)$ is classical, these definitions agree with classical definitions:
$\mathcal{G}$ is disconnected (resp. bipartite) if there is a decomposition $V=V_0 \sqcup V_1$ with no edges between $V_0$ and $V_1$ (resp. with all edges between $V_0$ and $V_1$). The equivalence is proved by mapping $V_0$ and $V_1$ to the distinct vertices of $K_2$ or $T_2$.

A naive definition of these properties by $A=\begin{pmatrix} * & 0  \\ 0 & *  \end{pmatrix}$ or $\begin{pmatrix} 0 & *  \\ * & 0  \end{pmatrix}$
along some nontrivial decomposition $B=B_0\oplus B_1$ 
of the quantum set
is too restrictive for quantum graphs. 
Indeed there exists a $1$-regular undirected irreflexive quantum graph $(M_2,\tau,A=2E_{\C^2}-\id)$
(c.f.~\cite{Gromada2022some,Matsuda2022classification})
 with $\Spec A=\{-1,-1,1,1\}$, which looks like bipartite and disconnected but has no nontrivial decomposition $M_2=B_0\oplus B_1$.
That is why we defined as above.

The following is the key lemma to prove spectral characterizations of these properties.
This lemma allows us to control the decomposition of a self-adjoint operator into positive and negative parts.

\begin{lem}\label{lem:posnegdec}
Let $B$ be a $C^*$-algebra with a faithful state $\psi$, and $x_\pm, y_\pm \in B$ be positive elements satisfying 
\[
x_+ - x_- = y_+ - y_-
\]
with $\psi(x_+) = \psi(y_+), \psi(x_-) = \psi(y_-)$.
Assume that there is a projection $p \in B$ such that 
\[
p x_+=x_+=x_+ p, \quad (1-p) x_-=x_-=x_- (1-p), 
\quad \psi(p \ \cdot)=\psi(\cdot \ p).
\]  
Then it follows that
$x_+=y_+, x_-=y_-$.
\end{lem}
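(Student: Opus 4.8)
The plan is to reduce everything to showing that $y_+$ lies in the corner $pBp$ and $y_-$ in the corner $(1-p)B(1-p)$, exactly as $x_+$ and $x_-$ do by hypothesis; once the corner structures are matched, equality of traces together with faithfulness will force $y_\pm = x_\pm$. Writing $a \coloneqq x_+ - x_- = y_+ - y_-$, the corner conditions on the $x_\pm$ first give $p a p = p x_+ p - p x_- p = x_+$, since $x_- = (1-p)x_-$ makes $p x_- = 0$, hence $p x_- p = 0$. Compressing the other decomposition of $a$ by $p$ then yields $p y_+ p = p a p + p y_- p = x_+ + p y_- p$, with $p y_- p \geq 0$.

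Next I would expand $y_+$ along $1 = p + (1-p)$ and apply the state. The centralizer hypothesis $\psi(p\,\cdot) = \psi(\cdot\,p)$ makes the two off-diagonal corners vanish under $\psi$: for instance $\psi(p y_+ (1-p)) = \psi(y_+(1-p)p) = 0$ because $(1-p)p = 0$, and similarly $\psi((1-p)y_+ p) = \psi(p(1-p)y_+) = 0$. Therefore $\psi(y_+) = \psi(p y_+ p) + \psi((1-p) y_+ (1-p))$. Substituting $p y_+ p = x_+ + p y_- p$ and then the hypothesis $\psi(x_+) = \psi(y_+)$, the $\psi(y_+)$ terms cancel and I am left with $\psi(p y_- p) + \psi((1-p) y_+ (1-p)) = 0$.

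Both summands are nonnegative, so faithfulness of $\psi$ forces $\psi(p y_- p) = 0$ and $\psi((1-p) y_+ (1-p)) = 0$. Applying faithfulness again through the standard implication $\psi(c^* c) = 0 \Rightarrow c = 0$, with $c = y_-^{1/2} p$ and $c = y_+^{1/2}(1-p)$, I obtain $p y_- p = 0$ and $(1-p) y_+ (1-p) = 0$, i.e.\ $y_-$ sits in $(1-p)B(1-p)$ and $y_+$ sits in $pBp$. In particular $y_+ = p y_+ p = x_+ + p y_- p = x_+$, and then $y_- = y_+ - a = x_-$, which is the claim.

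The one genuine subtlety—and the step I would watch most carefully—is the treatment of the off-diagonal corners $p y_+ (1-p)$ and $(1-p) y_+ p$. These are not positive, so one cannot directly compare $y_+$ with its compression $p y_+ p$ as operators, and an operator inequality of the form $y_+ \geq p y_+ p$ would in fact fail. It is precisely the centralizer condition on $p$ that causes these terms to disappear after applying $\psi$, converting what would be a false operator statement into a valid scalar identity; this is the heart of the argument. Everything else is routine $C^*$-algebraic bookkeeping with faithfulness, and I note in passing that the hypothesis $\psi(x_-) = \psi(y_-)$ is actually redundant, since it already follows from $x_+ = y_+$ and $a = x_+ - x_- = y_+ - y_-$.
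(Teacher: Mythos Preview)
Your proof is correct and shares the paper's overall strategy: use the centralizer hypothesis on $p$ to kill the off-diagonal contributions to $\psi(y_+)$, then invoke faithfulness on the resulting identity $\psi(py_-p)+\psi((1-p)y_+(1-p))=0$ to obtain $py_-p=0$ and $(1-p)y_+(1-p)=0$. The paper phrases this in terms of $\xi \coloneqq y_+-x_+=y_--x_-$, noting $p\xi p=py_-p$ and $(1-p)\xi(1-p)=(1-p)y_+(1-p)$, so up to this point the two arguments are essentially identical.

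The genuine difference is in the final step, eliminating the off-diagonal part. The paper argues that $(p+t(1-p))\,y_+\,(p+t(1-p))=x_+ + t\bigl(p\xi(1-p)+(1-p)\xi p\bigr)\geq 0$ for all $t\in\R$, which forces the self-adjoint element $p\xi(1-p)+(1-p)\xi p$ to vanish, whence $\xi=0$. Your route is more direct: from $(1-p)y_+(1-p)=0$ and $y_+\geq 0$ you extract $y_+^{1/2}(1-p)=0$ via $c^*c=0\Rightarrow c=0$, so $y_+(1-p)=0$ and $y_+=py_+p=x_+$ immediately, with no parameter trick. This is a cleaner finish; the paper's version has the minor advantage of making explicit that it is $\xi$ being killed corner by corner. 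Your closing observation that $\psi(x_-)=\psi(y_-)$ is redundant is also correct.
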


\begin{proof}
We show that $\xi \coloneqq  y_+ - x_+=y_- - x_-$ is zero.
By assumptions on $p$, we have
\begin{align}
p \xi p &= p y_+ p - x_+ = p y_- p \geq 0
\\
(1-p) \xi (1-p)&= (1-p) y_- (1-p) -  x_- = (1-p) y_+ (1-p) \geq 0
\\
p \xi (1-p) &= p y_+ (1-p) = p y_- (1-p).
\end{align}
By $\psi(\xi)=\psi(y_+)-\psi(x_+)=0$ and $\psi(p\xi(1-p))=\psi(\xi(1-p)p)=0$, we have 
\begin{align}
0 = \psi(\xi) &= \psi(p \xi p) + \psi((1-p) \xi (1-p)) +\psi(p \xi (1-p)) +\psi((1-p) \xi p)
\\
&= \psi(p \xi p) + \psi((1-p) \xi (1-p)).
\end{align} 
Since $p \xi p$ and $(1-p) \xi (1-p)$ are positive, faithfulness of $\psi$ implies
\begin{align}
p\xi p = (1-p) \xi (1-p) =0.
\end{align}
By positivity of $y_+=x_+ + \xi$, it follows for all $t \in \R$ that 
\[
(p + t(1-p))y_+ (p + t(1-p))=x_+ + t(p \xi (1-p) + (1-p) \xi p)
\]
 is positive. 
Since $p \xi (1-p) + (1-p) \xi p$ is self-adjoint, if it has a nonzero positive or negative part, $x_+ + t(p \xi (1-p) + (1-p) \xi p)$ cannot be always positive.
Therefore $p \xi (1-p) + (1-p) \xi p=0$,
hence $\xi=(p + (1-p))\xi (p + (1-p))=0$.
\end{proof}

\begin{lem}\label{lem:tracialposneg}
Let $B$ be a von Neumann algebra with a faithful tracial state $\tau$, and $x_\pm, y_\pm \in B$ be positive elements satisfying 
\[
x_+ - x_- = y_+ - y_-, \quad x_+ x_-=x_- x_+=0
\]
with $\tau(x_+) = \tau(y_+), \tau(x_-) = \tau(y_-)$. 
Then it follows that
$x_+=y_+, x_-=y_-$.
\end{lem}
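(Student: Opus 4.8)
The plan is to reduce this tracial statement to the already-established Lemma \ref{lem:posnegdec} by manufacturing the projection $p$ that lemma requires out of the orthogonality hypothesis $x_+ x_- = x_- x_+ = 0$. Since $B$ is a von Neumann algebra, I would take $p$ to be the support projection of $x_+$, i.e.\ the smallest projection in $B$ with $p x_+ = x_+$; equivalently $p = \chi_{(0,\infty)}(x_+ - x_-)$ is the positive spectral projection of the self-adjoint element $x_+ - x_-$, whose Jordan decomposition into its orthogonal positive and negative parts is precisely $x_+ - x_-$ thanks to $x_+ x_- = 0$. By the defining property of the support projection, $p x_+ = x_+ = x_+ p$ holds at once.

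Next I would check the two remaining conditions of Lemma \ref{lem:posnegdec}. From $x_- x_+ = 0$ one gets $x_- x_+^n = 0$ for every $n \geq 1$, hence $x_- f(x_+) = 0$ for every continuous $f$ vanishing at $0$; approximating the indicator of the spectrum of $x_+$ away from $0$ by such $f$ and passing to the strong operator topology gives $x_- p = 0$, so that $(1-p) x_- = x_- = x_-(1-p)$. The centrality condition $\psi(p\,\cdot) = \psi(\cdot\,p)$ is then automatic upon taking $\psi = \tau$, since $\tau$ is a trace. This last point is exactly where traciality enters: it supplies for free the centrality of $p$ that in the general Lemma \ref{lem:posnegdec} had to be imposed as a hypothesis.

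With $\psi = \tau$ and this choice of $p$, every hypothesis of Lemma \ref{lem:posnegdec} is satisfied — positivity of $x_\pm, y_\pm$, the identity $x_+ - x_- = y_+ - y_-$, and the matching of the trace values $\tau(x_\pm) = \tau(y_\pm)$ are given outright — so that lemma immediately yields $x_+ = y_+$ and $x_- = y_-$. I do not anticipate a genuine obstacle; the only step demanding care is the verification that $x_+ x_- = 0$ really forces $x_-$ to lie under $1-p$, which is the standard fact that two orthogonal positive operators have orthogonal support projections. Everything else is inherited directly from Lemma \ref{lem:posnegdec}.
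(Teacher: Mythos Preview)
Your proposal is correct and is essentially identical to the paper's own proof: take $p$ to be the range (support) projection of $x_+$, use $x_+ x_- = 0$ to get $(1-p)x_- = x_- = x_-(1-p)$, use traciality for $\tau(p\,\cdot)=\tau(\cdot\,p)$, and apply Lemma~\ref{lem:posnegdec}. The paper states this in three lines without the functional-calculus justification you spelled out, but the argument is the same.
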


\begin{proof}
Since $x_+ x_-=x_- x_+=0$, the range projection $p$ of $x_+$ satisfies
$p x_+=x_+=x_+ p$ and $(1-p) x_-=x_-=x_- (1-p)$. 
Since $\tau$ is tracial, we also have $\tau(p \ \cdot)=\tau(\cdot \ p)$. 
Thus Lemma \ref{lem:posnegdec} shows $x_+=y_+, x_-=y_-$.
\end{proof}

\begin{rmk}
Note that the assumption $\psi(p \ \cdot)=\psi(\cdot \ p)$ 
is essential in Lemma \ref{lem:posnegdec}. 
Indeed we have the following counterexample without this property. 
Let $B=M_2, \psi=\omega_q \circ \ad(u)=\Tr(u^* Qu \cdot)$ where 
$\displaystyle 
Q=\frac{1}{1+q^2} \begin{pmatrix} 1 & 0 \\ 0 & q^2 \end{pmatrix}, q \in (0,1), 
u=\frac{1}{\sqrt{2}} \begin{pmatrix} 1 & -1 \\ 1 & 1 \end{pmatrix}$.
Put 
\begin{align}
x_+ &= \begin{pmatrix} 1 & 0 \\ 0 & 0 \end{pmatrix} \geq 0,
&
x_- &= \begin{pmatrix} 0 & 0 \\ 0 & 1 \end{pmatrix} \geq 0,
&
\xi &= \alpha \begin{pmatrix} 
	1 & \frac{1+q^2}{1-q^2} \\
	 \frac{1+q^2}{1-q^2} & 1 
	\end{pmatrix} : \text{s.a.},
\end{align}
for $\alpha \in \left(0, \frac{(q^{-1}-q)^2}{4} \right]$.
It follows that 
\begin{align}
y_\pm &= x_\pm +\xi \geq 0, 
&
\psi(\xi) &= 0
\text{, i.e., }
\psi(x_\pm) = \psi(y_\pm),
\end{align}
and $x_+, x_-$ are orthogonal projections, but $\xi \neq 0$.
\end{rmk}

\begin{proof}
We have
\[
y_+=\begin{pmatrix} 
	1+\alpha & \frac{1+q^2}{1-q^2}\alpha \\
	 \frac{1+q^2}{1-q^2}\alpha & \alpha 
	\end{pmatrix},
\]
hence 
$\Tr(y_+)=1+2\alpha>0$
and
\[
\det y_+ = \alpha +\alpha^2 \left(1 - \left(\frac{1+q^2}{1-q^2}\right)^2 \right)
= \alpha \left(1 - \alpha \frac{4 q^2}{(1-q^2)^2} \right) \geq 0
\]
show that $y_+ \geq 0$, and $y_- \geq 0$ as well.
By simple computation, we get
\begin{align}
\psi(\xi)=\Tr(u^* Qu \xi)
=\frac{\alpha}{2} \Tr \left( 
	\begin{pmatrix} 
	1 & \frac{-1+q^2}{1+q^2} \\
	 \frac{-1+q^2}{1+q^2} & 1
	\end{pmatrix}
	\begin{pmatrix} 
	1 & \frac{1+q^2}{1-q^2} \\
	 \frac{1+q^2}{1-q^2} & 1 
	\end{pmatrix} 
\right)
=0.
\end{align}
\end{proof}

\begin{lem}\label{lem:fcncalcev}
Let $\mathcal{G} = (B,\psi,A)$ be a $d$-regular undirected tracial quantum graph.
It follows for any self-adjoint $x \in \ker (d \,\id -A)$ that $C^*(x) \subset \ker (d \,\id -A)$.
\end{lem}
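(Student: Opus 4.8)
The plan is to reduce the eigenvalue condition $Ax=dx$ to the vanishing of the graph gradient and then to propagate this vanishing to all powers of $x$ via the Leibniz rule.

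First I would translate the eigenspace condition into a gradient condition. Since $\mathcal{G}$ is undirected we have $A=A^\dagger$, so the $d$-regular case of Lemma \ref{lem:specbound} specializes to
\[
0 \leq \nabla_A^\dagger \nabla_A = 2\delta^{-2}\bigl(d\,\id_B - A\bigr).
\]
As $\nabla_A^\dagger \nabla_A$ is positive, its kernel coincides with $\ker \nabla_A$ (because $\langle \xi|\nabla_A^\dagger\nabla_A\,\xi\rangle=\norm{\nabla_A \xi}^2$), and the displayed identity then gives the equivalence
\[
\xi \in \ker(d\,\id_B - A) \iff \nabla_A \xi = 0 .
\]
In particular the given self-adjoint eigenvector satisfies $\nabla_A x=0$.

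Next I would invoke the fact, established earlier in the section, that $\nabla_A$ is a $\C$-derivation: $\nabla_A(yz)=(\nabla_A y)z+y(\nabla_A z)$ and $\nabla_A(\lambda)=0$ for $\lambda\in\C$. Starting from $\nabla_A x=0$, an easy induction on $n$ using the Leibniz rule yields $\nabla_A(x^n)=0$ for every $n\geq 0$, hence every polynomial in $x$ lies in $\ker \nabla_A=\ker(d\,\id_B - A)$. Finally, since $x$ is self-adjoint, $C^*(x)$ is the commutative algebra generated by $x$ and $1_B$; as $B$ is finite-dimensional the spectrum of $x$ is finite, so continuous functional calculus collapses to polynomials and $C^*(x)$ equals the span of $\{1_B,x,x^2,\dots\}$. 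Each such element lies in $\ker(d\,\id_B-A)$, giving $C^*(x)\subset \ker(d\,\id_B-A)$.

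I expect the only genuine content to be the first step, namely the observation that the top eigenspace $\ker(d\,\id_B-A)$ is exactly the kernel of $\nabla_A$; once this is in place, the derivation property of $\nabla_A$ makes the rest automatic. The remaining points—self-adjointness of $x$ guaranteeing that $C^*(x)$ is generated by $x$ alone, and the harmless passage from polynomials to the full (finite-dimensional, hence closed) $C^*$-algebra—are routine.
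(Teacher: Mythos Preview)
Your proof is correct and takes a genuinely different route from the paper. The paper argues directly on the spectral decomposition $x=\sum_i \lambda_i p_i$: writing $x-\lambda_2 1_B$ as a difference of positive elements with disjoint supports and applying Lemma~\ref{lem:tracialposneg} (which is where traciality enters) forces $Ap_1=dp_1$, and an induction peels off the remaining spectral projections one by one. Your argument instead identifies $\ker(d\,\id-A)$ with $\ker\nabla_A$ via the Laplacian identity of Lemma~\ref{lem:specbound} in the undirected case, and then the derivation property of $\nabla_A$ immediately propagates $\nabla_A x=0$ to all polynomials in $x$.

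Your approach is shorter, more conceptual, and in fact never uses the tracial hypothesis: the Laplacian formula and the derivation property require only that $\mathcal{G}$ be real (hence undirected suffices) and $d$-regular. So you have actually proved a slightly stronger statement than the one in the paper. The paper's approach, on the other hand, exercises Lemma~\ref{lem:posnegdec}/\ref{lem:tracialposneg}, which is the workhorse for the later bipartiteness arguments where one must control $Ax_\pm$ for an eigenvector at $-d$; there the gradient trick is not available (since $\ker(d\,\id+A)$ is not $\ker\nabla_A$), so the decomposition lemma is genuinely needed elsewhere even if it can be bypassed here.
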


\begin{proof}
It suffices to show that $A p_i = d p_i$ 
for the spectral projections $\{p_1,..., p_k \}$ 
of $x= \sum_{i=1}^k \lambda_i p_i$
with $\lambda_1 > \cdots > \lambda_k$.
Consider $\ker (d\, \id -A) \ni x-\lambda_2 1_B 
= (\lambda_1 - \lambda_2) p_1 - \sum_{i=2}^k (\lambda_2 - \lambda_i) p_i$,
then 
\[
(\lambda_1 - \lambda_2) Ap_1 - \sum_{i=2}^k (\lambda_2 - \lambda_i) Ap_i
= d(\lambda_1 - \lambda_2) p_1 - d \sum_{i=2}^k (\lambda_2 - \lambda_i) p_i.
\]
Since $(\lambda_1 - \lambda_2) p_1$ and $\sum_{i=2}^k (\lambda_2 - \lambda_i) p_i$ are positive and have disjoint supports, $\psi A=d \psi$ shows that we can apply Lemma \ref{lem:tracialposneg}. Thus
\[
(\lambda_1 - \lambda_2) Ap_1 = d(\lambda_1 - \lambda_2) p_1,
\]
hence $p_1, \sum_{i=2}^k \lambda_i p_i \in \ker(d \,\id -A)$. 
Inductively we get $p_1,..., p_k \in \ker(d \,\id -A)$.

\end{proof}

\subsection{Connected quantum graphs}

\begin{thm}\label{thm:conn=algconn}
Let $\mathcal{G} = (B,\psi,A)$ be a $d$-regular undirected tracial quantum graph. The following are equivalent:
\begin{enumerate}
\item [$(1)$]
$\mathcal{G}$ is connected.

\item [$(2)$]
$d \in \Spec(A)$ is a simple root, i.e., $\dim \ker(d \,\id -A)=1$.
\end{enumerate}
\end{thm}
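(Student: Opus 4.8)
The plan is to establish the contrapositive equivalence: $\mathcal{G}$ is \emph{disconnected} if and only if $\dim\ker(d\,\id-A)\ge 2$. Since $\mathcal{G}$ is undirected, $A=A^\dagger$ is self-adjoint, so the geometric multiplicity $\dim\ker(d\,\id-A)$ coincides with the algebraic one and this reformulation faithfully matches ``$d$ is a simple root''. First I would unwind the definition of disconnectedness. A surjective homomorphism $\mathcal{G}\to T_2=(\C^2,\tau_{\C^2},\id)$ is an injective unital $*$-homomorphism $f:\C^2\to B$, i.e.\ a projection $p=f(e_1)$ with $0\ne p\ne 1$ and $f(e_2)=1-p$. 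The Schur condition $\id\bullet(f^\dagger A f)=f^\dagger A f$ forces $f^\dagger A f$ into the edge space $\rho(\C^2)$ of $T_2$, i.e.\ makes the $2\times2$ operator $f^\dagger A f$ diagonal; its off-diagonal entry is, up to normalization, $\braket{p|A(1-p)}$ (the conjugate entry vanishing automatically from $A=A^\dagger$). Hence the theorem reduces to the dictionary: \emph{there is a nontrivial projection $p$ with $\braket{p|A(1-p)}=0$ if and only if $\dim\ker(d\,\id-A)\ge 2$}.

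For ``$\Rightarrow$'', suppose such a $p$ exists. Regularity $A1_B=d1_B$ gives $\braket{p|A1_B}=d\braket{p|1_B}=d\braket{p|p}$, so $\braket{p|A(1-p)}=0$ rearranges to $\braket{p|(d\,\id-A)p}=0$. The spectral bound of Section 2---either $\Spec(A)\subset[-d,d]$ from Theorem \ref{thm:norm=deg}(1), or equivalently $d\,\id-A=\tfrac{\delta^2}{2}\nabla_A^\dagger\nabla_A\ge 0$---shows $d\,\id-A$ is a positive operator, and a positive operator kills any vector on which its quadratic form vanishes; thus $(d\,\id-A)p=0$. Then $1_B$ and the nontrivial projection $p$ are linearly independent elements of $\ker(d\,\id-A)$, giving $\dim\ker(d\,\id-A)\ge 2$.

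For ``$\Leftarrow$'', I would manufacture a projection inside the eigenspace. By Lemma \ref{lem:saevreal} the space $\ker(d\,\id-A)$ is spanned by self-adjoint elements, so if its dimension exceeds $1$ it contains a self-adjoint $x\notin\C1_B$. Lemma \ref{lem:fcncalcev} then gives $C^*(x)\subset\ker(d\,\id-A)$, so every spectral projection of $x$ is a $d$-eigenvector; as $x$ is not scalar, one such spectral projection $p$ has $0\ne p\ne1$. Finally $Ap=dp$ and $A(1-p)=d(1-p)$ yield $\braket{p|A(1-p)}=d\,\tau_B(p(1-p))=0$, so $e_1\mapsto p$, $e_2\mapsto 1-p$ defines a surjective homomorphism $\mathcal{G}\to T_2$ and $\mathcal{G}$ is disconnected.

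The main obstacle is precisely this last extraction: a priori $\ker(d\,\id-A)$ is only a self-adjoint subspace, and producing an honest idempotent that disconnects the graph requires the eigenspace to be stable under functional calculus. That stability is Lemma \ref{lem:fcncalcev}, whose proof runs through the tracial decomposition Lemma \ref{lem:tracialposneg}; this is the point at which the traciality hypothesis is genuinely consumed. By comparison the forward direction is soft, converting the vanishing off-diagonal entry into $(d\,\id-A)p=0$ purely from positivity of $d\,\id-A$.
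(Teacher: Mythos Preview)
Your proof is correct and follows essentially the same architecture as the paper's: both directions are proved by contraposition, and the backward direction (multiplicity $\ge 2$ $\Rightarrow$ disconnected) is identical, passing through Lemma~\ref{lem:saevreal} and the functional-calculus stability Lemma~\ref{lem:fcncalcev}.

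The forward direction is where you diverge slightly, and your route is cleaner. The paper takes the projections $x_1,x_2$, writes $Ax_1 = dx_1 + (dx_2 - Ax_2)$ as an orthogonal decomposition along $\C x_1 \oplus (x_1)^\perp$, invokes the operator-norm bound $\norm{A}_\op = d$ from Theorem~\ref{thm:norm=deg} to get $\norm{Ax_1}_2^2 \le d^2\psi(x_1)$, and then reads off $Ax_2 = dx_2$ from Pythagoras. You instead observe directly that $\braket{p|(d\,\id - A)p} = 0$ together with the positivity $d\,\id - A \ge 0$ forces $(d\,\id - A)p = 0$. Both arguments rest on the same spectral input ($\Spec A \subset [-d,d]$), but yours bypasses the explicit decomposition and the norm estimate, going straight through the quadratic-form characterization of the kernel of a positive operator. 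The paper's version is perhaps more parallel to its bipartiteness proof (Theorem~\ref{thm:connbipartite}), where the analogous Pythagorean step is reused; yours is more self-contained.
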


\begin{proof} 
If $\dim B=1$, then $\mathcal{G}$ is connected and $d$ is simple.
If ${\dim B}\geq 2$ and $d=0$, then $A=0$ by Lemma \ref{lem:numofedges} and $d$ has multiplicity $\geq 2$, whence there is an injective unital $*$-homomorphism $f: \C^2 \to B$. Hence neither $\mathcal{G}$ is connected nor $d$ is simple.
In the sequel of the proof, we may assume $d>0$ and $\dim B \geq 2$.

\noindent $((2) \implies (1))$: 
We show that $d$ is a multiple root if $\mathcal{G}$ is disconnected.

We have an injective unital $*$-homomorphism $f: \C^2 \to B$ such that 
$\begin{pmatrix} 1 & 0  \\ 0 & 1  \end{pmatrix} \bullet  f^\dagger A f
=f^\dagger A f$. Put $x_1=f(e_1), x_2=f(e_2) \in B$, 
which are mutually orthogonal nonzero projections satisfying $x_1+ x_2 = 1_B$.
The regularity shows $Ax_1 + Ax_2=A1_B=dx_1 + dx_2$.
By $(f^\dagger A f)_{ij}=2\braket{e_i|f^\dagger A f |e_j}=2\braket{x_i| A x_j}$ and  
$\begin{pmatrix} 1 & 0  \\ 0 & 1  \end{pmatrix} \bullet f^\dagger A f 
=f^\dagger A f$, it follows that 
\begin{align}
\braket{x_1| A x_2} &= \braket{x_2| A x_1} = 0;
\\
\braket{x_1| A x_1} &= \braket{x_1+x_2| A x_1} = \psi(A x_1) = d\psi(x_1);
\\
\braket{x_2| A x_2} &= \braket{x_1+x_2| A x_2} = \psi(A x_2) = d\psi(x_2).
\end{align}
Thus $Ax_1= dx_1 + (dx_2-Ax_2)$ gives the orthogonal decomposition of $Ax_1$
 along $\C x_1 \oplus (x_1)^\perp$. Then we have
\[
d^2 \psi(x_1) \geq \norm{Ax_1}_2^2
= \norm{d x_1}_2^2 + \norm{dx_2-Ax_2}_2^2
= d^2 \psi(x_1) + \norm{dx_1-Ax_2}_2^2,
\]
hence $\norm{dx_2-Ax_2}_2=0$, i.e., $Ax_2=dx_2$ and $Ax_1=dx_1$.
Therefore $d \in \Spec(A)$ has multiplicity more than $1$. 

\noindent $((1) \implies (2))$:
We show that $\mathcal{G}$ is disconnected if $d$ is not simple.

By Lemma \ref{lem:saevreal} and the multiplicity of $d$, there is a self-adjoint $x \in \ker(d\, \id -A) \setminus \C1$, 
and Lemma \ref{lem:fcncalcev} allows us to take mutually orthogonal projections $x_1,x_2 \in \ker(d\, \id -A)$
satisfying $x_1+ x_2=1$ as spectral projections of $x$. 
Thus we obtain an injective $*$-homomorphism
$f: \C^2 \to B$ defined by $f(e_i)=x_i$ for $i=1,2$.
It satisfies
\begin{align}
2 \braket{e_i|f^\dagger Af | e_j} 
&= 2 \braket{x_i | A x_j} = 2d \braket{x_i | x_j} =2d \psi(x_i) \delta_{ij}.
\end{align}
Thus $f^\dagger Af = \begin{pmatrix} 2d \psi(x_1) & 0  \\ 0 & 2d \psi(x_2)  \end{pmatrix}$,
which gives a surjective graph homomorphism $f^\op: \mathcal{G} \to T_2$.
\end{proof}

\subsection{Bipartite quantum graphs}

\begin{thm}\label{thm:connbipartite}
Let $\mathcal{G} = (B,\psi,A)$ be a $d$-regular connected undirected tracial quantum graph. The following are equivalent:
\begin{enumerate}
\item [$(1)$]
$\mathcal{G}$ is bipartite.

\item [$(2)$]
$-d \in \Spec(A)$.
If $d=0$, we require that the multiplicity of $0 \in \Spec(A)$ is at least two, i.e., ${\dim B} \geq 2$.
\end{enumerate}
\end{thm}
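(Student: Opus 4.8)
The plan is to translate both conditions into statements about a single pair of complementary projections and then connect them through the spectral bound of Theorem \ref{thm:norm=deg} and the positivity lemmas already in hand. Unwinding the definition, $\mathcal{G}$ is bipartite precisely when there exist nonzero orthogonal projections $x_1,x_2\in B$ with $x_1+x_2=1_B$ such that the pushforward $f^\dagger Af$ (for the unital $*$-homomorphism $f(e_i)=x_i$) has vanishing diagonal; since $(f^\dagger Af)_{ii}=2\braket{x_i|Ax_i}=2\psi(x_iAx_i)$, bipartiteness amounts to finding such $x_1,x_2$ with $\psi(x_1Ax_1)=\psi(x_2Ax_2)=0$. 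Throughout I would record that $\Spec(A)\subset[-d,d]$, hence $A+d\,\id_B\ge0$, by Theorem \ref{thm:norm=deg}, and dispose of $d=0$ first: there $A=0$ by Lemma \ref{lem:numofedges}, so any nontrivial projection decomposition witnesses bipartiteness, and the hypothesis $\dim B\ge2$ is exactly what provides one. So in both directions I may assume $d>0$.

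For $(1)\Rightarrow(2)$ I would form the self-adjoint unitary $z\coloneqq x_1-x_2=2x_1-1_B$, which satisfies $\norm{z}_2^2=\psi(z^2)=\psi(1_B)=1$. Expanding $\braket{z|Az}$ and using the vanishing diagonal together with $d$-regularity (which gives $\braket{x_i|Ax_1}+\braket{x_i|Ax_2}=\braket{x_i|A1_B}=d\psi(x_i)$) yields $\braket{z|Az}=-d$. Since $A+d\,\id_B\ge0$ and $\braket{z|(A+d\,\id_B)z}=0$, positivity forces $(A+d\,\id_B)z=0$, i.e.\ $Az=-dz$ with $z\neq0$, so $-d\in\Spec(A)$.

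The substance is $(2)\Rightarrow(1)$, which is where I expect the real work. Starting from $-d\in\Spec(A)\cap\R$, Lemma \ref{lem:saevreal} furnishes a nonzero self-adjoint $w$ with $Aw=-dw$; comparing $\psi(Aw)=d\psi(w)$ (from $\psi\circ A=d\psi$) with $\psi(Aw)=-d\psi(w)$ gives $\psi(w)=0$. I decompose $w=w_+-w_-$ into positive and negative parts (orthogonal supports, $w_+w_-=0$), so $\psi(w)=0$ forces $\psi(w_+)=\psi(w_-)$. The key move is to read $-dw$ as two competing positive decompositions of one element: on one hand $-dw=d\,w_--d\,w_+$ with orthogonal-support positives $d\,w_-,d\,w_+$, and on the other $-dw=Aw=Aw_+-Aw_-$ with positives $Aw_\pm$ (positive because realness makes $A$ completely positive). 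As $\psi\circ A=d\psi$, the two decompositions carry equal $\psi$-values on each part, so Lemma \ref{lem:tracialposneg} applies and yields $Aw_+=d\,w_-$ and $Aw_-=d\,w_+$. Hence $w_++w_-\in\ker(d\,\id_B-A)$, and connectedness via Theorem \ref{thm:conn=algconn} collapses this to $w_++w_-=c\,1_B$ with $c>0$. Combined with $w_+w_-=0$, this makes $x_1\coloneqq w_+/c$ and $x_2\coloneqq w_-/c$ complementary projections, necessarily nonzero (if one vanished, $w$ would be a scalar multiple of $1_B$, contradicting $Aw=-dw\neq dw$), and the relations $Ax_1=d\,x_2$, $Ax_2=d\,x_1$ give $\psi(x_iAx_i)=d\,\psi(x_1x_2)=0$, exhibiting the required surjective homomorphism $\mathcal{G}\to K_2$.

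The main obstacle is precisely this passage from an arbitrary self-adjoint $-d$-eigenvector to a self-adjoint \emph{unitary} one, i.e.\ from a generic eigenfunction to a genuine $\pm1$ bipartition. The positivity and trace-matching argument feeding Lemma \ref{lem:tracialposneg} is what makes it go through: it replaces the naive hope that $A(w^2)=d\,w^2$ (which I would not attempt directly) by the cleaner identities $Aw_\pm=d\,w_\mp$, after which connectedness pins down $w_++w_-$ as a scalar and finishes the construction. I would take care that traciality of $\psi$ enters only where Lemma \ref{lem:tracialposneg} demands it, and that the separate $d=0$ treatment lines up with the multiplicity clause in the statement.
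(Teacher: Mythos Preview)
Your proposal is correct and follows essentially the same strategy as the paper: the $(2)\Rightarrow(1)$ direction matches the paper's proof almost exactly (Lemma \ref{lem:tracialposneg} applied to $Aw_+-Aw_-=dw_--dw_+$, then connectedness via Theorem \ref{thm:conn=algconn} to force $w_++w_-\in\C 1_B$). For $(1)\Rightarrow(2)$ your quadratic-form argument ($\braket{z|(A+d\,\id)z}=0$ with $A+d\,\id\ge0$ forcing $(A+d\,\id)z=0$) is a slight streamlining of the paper's orthogonal-decomposition norm estimate $\norm{Ax_1}_2^2=\norm{dx_2}_2^2+\norm{dx_1-Ax_2}_2^2$, but both draw the conclusion from the same spectral bound of Theorem \ref{thm:norm=deg}.
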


\begin{proof} 
If  ${\dim B} =1$, then $A=0$ with simple root $d=0$ or $A=\id_\C$ with $d=1\neq -d$, hence neither bipartite nor $-d \in \Spec(A)$.
We may assume ${\dim B}\geq 2$, then the connectedness implies $d>0$ as argued in the proof of Theorem \ref{thm:conn=algconn}.

\noindent $((1) \implies (2))$:
We have an injective unital $*$-homomorphism $f: \C^2 \to B$ such that 
$\begin{pmatrix} 0 & 1  \\ 1 & 0  \end{pmatrix} \bullet f^\dagger A f 
=f^\dagger A f$. Put $x_1=f(e_1), x_2=f(e_2) \in B$, 
which are mutually orthogonal nonzero projections satisfying $x_1+ x_2 = 1_B$.
The regularity shows $Ax_1 + Ax_2=A1_B=dx_1 + dx_2$.
By $(f^\dagger A f)_{ij}=2\braket{e_i|f^\dagger A f |e_j}=2\braket{x_i| A x_j}$ and  
$\begin{pmatrix} 0 & 1  \\ 1 & 0  \end{pmatrix} \bullet f^\dagger A f  
=f^\dagger A f$, it follows that 
\begin{align}
\braket{x_1| A x_1} &= \braket{x_2| A x_2} = 0;
\\
\braket{x_1| A x_2} &= \braket{x_1+x_2| A x_2} = \psi(A x_2) = d\psi(x_2)
\\
&=\ol{\braket{x_2| A x_1}}=d\psi(x_1).
\end{align}
Thus $\psi(x_1)=\psi(x_2)=1/2$,
and $Ax_1= dx_2 + (dx_1-Ax_2)$ gives the orthogonal decomposition of $Ax_1$
 along $\C x_2 \oplus (x_2)^\perp$. This yields
\[
\frac{d^2}{2} = d^2 \psi(x_1) \geq \norm{Ax_1}_2^2
= \norm{d x_2}_2^2 + \norm{dx_1-Ax_2}_2^2
= \frac{d^2}{2} + \norm{dx_1-Ax_2}_2^2,
\]
hence $\norm{dx_1-Ax_2}_2=0$, i.e., $Ax_1=dx_2$ and $Ax_2=dx_1$.
Therefore we obtain $A(x_1-x_2) = -d(x_1-x_2)$, which shows 
$-d \in \Spec(A)$. 

\noindent $((2) \implies (1))$:
By Lemma \ref{lem:saevreal}, we can take a self-adjoint $x \in \ker(d \,\id +A)$
with $\norm{x}_2=1$. 
Decompose $x=x_+ - x_-$ into positive and negative parts $x_\pm \in B_+$,
Then we have 
\[
A x_+ - A x_-= Ax = -dx =dx_- - dx_+.
\]
 The self-adjointness of $A$ implies the orthogonality of eigenvectors
$\psi(x)=\braket{1 | x}=0$, i.e., $\psi(x_+)=\psi(x_-)$, 
hence the regularity implies 
$\psi(Ax_\pm)=d \psi(x_\pm)=d \psi(x_\mp)=\psi(d x_\mp)$. 
Note that the real quantum graph $A$ is CP; hence $Ax_\pm$ are positive.
Since $\psi$ is tracial and $x_\pm$ have disjoint supports, 
Lemma \ref{lem:tracialposneg} shows
\[
Ax_\pm = dx_\mp.
\]
 Thus $A(x_+ + x_-)=d(x_+ + x_-)$.
Since $\mathcal{G}$ is connected, we get $x_+ + x_- =c1_B$ for some $c>0$.
By $1=\norm{x}_2^2=\norm{x_+}_2^2 + \norm{x_-}_2^2=\norm{x_+ + x_-}_2^2 =c^2$,
we have $c=1, x_+ + x_- =1_B$.
Then $x_+ x_-=0$ shows $x_\pm^2=x_\pm(x_\pm + x_\mp)=x_\pm$,
hence $x_\pm$ are mutually orthogonal projections with $\psi(x_\pm)=1/2$.
Thus we obtain an injective $*$-homomorphism
$f: \C^2 \to B$ defined by $f(e_1)=x_+, f(e_2)=x_-$.
It satisfies
\begin{align}
2 \braket{e_i|f^\dagger Af | e_i} 
&= 2 \braket{x_\pm | A x_\pm} = 2d \braket{x_\pm | x_\mp} =0 \quad(i=1,2);
\\
2 \braket{e_1|f^\dagger Af | e_2} 
&= 2 \braket{x_+ | A x_-} = 2d \braket{x_+ | x_+} =d.
\end{align}
Thus $f^\dagger Af = \begin{pmatrix} 0 & d  \\ d & 0  \end{pmatrix}$,
which gives a surjective graph homomorphism $f^\op: \mathcal{G} \to K_2$.
\end{proof}

\begin{thm}\label{thm:bipartite}
Let $\mathcal{G} = (B,\psi,A)$ be a $d$-regular undirected tracial quantum graph. The following are equivalent:
\begin{enumerate}
\item[$(1)$]
$\mathcal{G}$ has a bipartite component.

\item[$(2)$]
$-d \in \Spec(A)$.
If $d=0$, we require that the multiplicity of $0 \in \Spec(A)$ is at least two, i.e., ${\dim B} \geq 2$.
\end{enumerate}
\end{thm}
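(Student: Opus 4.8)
The plan is to follow the template of Theorem \ref{thm:connbipartite}, producing a pair of nonzero orthogonal projections $x_1,x_2$ that realize the $K_2$ part together with a ``leftover'' projection $x_3$ for the $T_1$ part. The genuinely new difficulty is that, without connectedness, the $d$-eigenspace $\ker(d\,\id-A)$ need no longer be $\C 1_B$, so the positive and negative parts of a $(-d)$-eigenvector are not themselves projections and must be refined by spectral projections. First I would dispose of the degenerate cases. If $\dim B=1$ there is no pair of nonzero orthogonal projections, and one checks directly that $-d\notin\Spec(A)$, so both statements fail. If $d=0$ then $A=0$ by Lemma \ref{lem:numofedges}, condition $(2)$ reads $\dim B\ge 2$, and any nontrivial projection $p$ gives $x_1=p,\ x_2=1_B-p,\ x_3=0$ with $f^\dagger A f=0$, so both statements hold. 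Hence I may assume $d>0$.

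For $(1)\Rightarrow(2)$ I would argue exactly as in Theorem \ref{thm:connbipartite}, merely carrying the extra projection. Writing $x_i=f(e_i)$, the Schur condition with the displayed matrix says $\braket{x_i|A x_j}=0$ for every entry off the pattern, so combined with regularity $Ax_1+Ax_2+Ax_3=d1_B$ one gets $\braket{x_1|Ax_2}=d\psi(x_1)$ and $\braket{x_2|Ax_1}=d\psi(x_2)$, whence $\psi(x_1)=\psi(x_2)$ by self-adjointness of $A$. Since the $x_2$-component of $Ax_1$ equals $dx_2$, the norm bound $\norm{A}_\op=d$ from Theorem \ref{thm:norm=deg}$(1)$ forces $Ax_1=dx_2$, and symmetrically $Ax_2=dx_1$; then $A(x_1-x_2)=-d(x_1-x_2)$ with $x_1-x_2\neq 0$, giving $-d\in\Spec(A)$.

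For $(2)\Rightarrow(1)$, I would take a self-adjoint $x\in\ker(d\,\id+A)$ by Lemma \ref{lem:saevreal} and split $x=x_+-x_-$. As in Theorem \ref{thm:connbipartite}, the orthogonality $\psi(x)=0$, complete positivity of $A$, and Lemma \ref{lem:tracialposneg} give $Ax_\pm=dx_\mp$, so $|x|=x_++x_-\in\ker(d\,\id-A)$, and by Lemma \ref{lem:fcncalcev} every spectral projection of $|x|$ lies in $\ker(d\,\id-A)$. Fixing such a projection $e$ (eigenvalue $c>0$) I would decompose $e=e^++e^-$ along the supports of $x_+$ and $x_-$, so that $Ae=de$. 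The crux is to promote this to $Ae^\pm=de^\mp$. Expanding $0=\braket{x_+|Ax_+}$ over the spectral levels of $|x|$ and using that $\psi$ is tracial, so $\psi(ab)\ge 0$ for $a,b\ge 0$, makes every summand nonnegative and forces the diagonal term $\braket{e^+|Ae^+}=0$, and likewise $\braket{e^-|Ae^-}=0$; then the same norm argument as above yields $Ae^+=de^-$ and $Ae^-=de^+$. Here $\psi(e^+)=\psi(e^-)$, so neither $e^\pm$ can vanish while the other survives, hence both are nonzero orthogonal projections. Setting $x_1=e^+,\ x_2=e^-,\ x_3=1_B-e^+-e^-$ and using $Ax_3=dx_3$, a direct check shows $f^\dagger Af$ is supported on the required pattern, exhibiting a bipartite component.

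The main obstacle is exactly this middle step of $(2)\Rightarrow(1)$: extracting honest projections from the parts $x_\pm$. Connectedness is what trivialized this in Theorem \ref{thm:connbipartite} (it collapsed $x_++x_-$ to a multiple of $1_B$), and its replacement here is the combination of Lemma \ref{lem:fcncalcev}, which keeps the spectral projections of the $d$-eigenvector $|x|$ inside the $d$-eigenspace, with the term-by-term positivity afforded by traciality. I expect the verification that both $e^+$ and $e^-$ are nonzero, via $\psi(e^+)=\psi(e^-)$, to be the subtle point that genuinely uses $d>0$ and cannot be bypassed.
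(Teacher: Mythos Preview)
Your proposal is correct. The degenerate cases and the direction $(1)\Rightarrow(2)$ match the paper's argument essentially verbatim.

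For $(2)\Rightarrow(1)$ the overall skeleton is identical to the paper's: both start from a self-adjoint $x\in\ker(d\,\id+A)$, obtain $Ax_\pm=dx_\mp$ via Lemma~\ref{lem:tracialposneg}, feed $|x|=x_++x_-$ into Lemma~\ref{lem:fcncalcev} to get $A(p_\lambda+p_{-\lambda})=d(p_\lambda+p_{-\lambda})$, and then must promote this to $Ap_\lambda=dp_{-\lambda}$. The difference lies in that promotion step. The paper argues order-theoretically: from $Ap_{-\lambda}\ge 0$ one gets $Ap_\lambda\le d(p_\lambda+p_{-\lambda})$, and from $\lambda p_\lambda\le x_+$ together with complete positivity one gets $Ap_\lambda\le (d/\lambda)x_-$; the supports of these two upper bounds meet only on $p_{-\lambda}$, forcing $Ap_\lambda\le dp_{-\lambda}$, and the reverse inequality follows by symmetry. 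You instead expand $0=\braket{x_+|Ax_+}=\sum_{c,c'}cc'\,\psi(p_c\,Ap_{c'})$ and use that each summand is nonnegative (since $\psi$ is tracial and $A$ is positive) to extract $\braket{p_\lambda|Ap_\lambda}=0$; then the $\|A\|_{\op}=d$ bound pins $Ap_\lambda$ to its $p_{-\lambda}$-component. Your route is a bit more elementary in that it avoids the lattice/support reasoning and stays at the level of inner products and norms.

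One small point of order in your write-up: the ``same norm argument as above'' needs $\psi(e^+)=\psi(e^-)$ to identify the $e^-$-component of $Ae^+$ as exactly $de^-$, so you should establish that equality (or rule out $e^\pm=0$) \emph{before} invoking the norm bound, not after. This is immediate: from $Ae=de$ and $\braket{e^+|Ae^+}=0$ one computes $\braket{e^-|Ae^+}=d\psi(e^+)$, and by self-adjointness of $A$ the same quantity equals $\braket{e^+|Ae^-}=d\psi(e^-)$; alternatively, if $e^-=0$ then $Ae^+=de^+$ contradicts $\braket{e^+|Ae^+}=0$ since $d>0$. With that in place the rest goes through as you wrote.
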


\begin{proof} 
If $\dim B=1$, $\mathcal{G}\to K_2\sqcup T_1$ cannot be surjective to $K_2$. Hence $\mathcal{G}$ does not have a bipartite component, and $-d \in \Spec(A)$ does not hold as in the previous proof.
If $d=0$ and ${\dim B} \geq 2$, then $d=0=-d$ is the multiple root of $A=0$ and has a graph homomorphism $\mathcal{G}\to K_2\sqcup T_1$ that is surjective to $K_2$, hence $\mathcal{G}$ has a bipartite component and $-d \in \Spec(A)$.
In the sequel of the proof, we may assume $d>0$ and ${\dim B} \geq 2$. 

\noindent $((1) \implies (2))$:
We have a unital $*$-homomorphism $f: \C^3 \to B$ such that 
$\begin{pmatrix}
0 & 1 & 0 \\ 1 & 0 & 0 \\ 0 & 0 & 1
\end{pmatrix} \bullet f^\dagger A f =f^\dagger A f$ and $f$ is injective on $\C^2 \oplus 0$.  
Put $x_i=f(e_i) \in B$ for $i=1,2,3$, 
which are mutually orthogonal projections satisfying $x_1+x_2+x_3=1$ and $x_1,x_2$ are nonzero. 
Then the regularity implies 
\[
Ax_1 + A(1-x_1)=A1_B=dx_2 + d(1-x_2).
\]
By $(f^\dagger A f)_{ij}=3\braket{e_i|f^\dagger A f |e_j}=3\braket{x_i| A x_j}$ and  
$\begin{pmatrix}
0 & 1 & 0 \\ 1 & 0 & 0 \\ 0 & 0 & 1
\end{pmatrix} \bullet f^\dagger A f 
=f^\dagger A f$, it follows that 
\begin{align}
\braket{1-x_1| A x_2} &= \braket{1-x_2| A x_1} = \braket{1-x_3| A x_3} = 0;
\\
\braket{x_1| A x_2} &= \braket{x_1+(1-x_1)| A x_2} = \psi(A x_2) = d\psi(x_2)
\\
&=\ol{\braket{x_2| A x_1}}=d\psi(x_1).
\end{align}
Thus $\psi(x_1)=\psi(x_2)$,
and $Ax_1= dx_2 + (d(1-x_2)-A(1-x_1))$ gives the orthogonal decomposition of $Ax_1$
 along $\C x_2 \oplus (x_2)^\perp$. Then we have
\begin{align}
d^2 \psi(x_1) = \norm{A}^2 \norm{x_1}_2^2
&\geq \norm{Ax_1}_2^2
= \norm{d x_2}_2^2 + \norm{d(1-x_2)-A(1-x_1)}_2^2
\\
&= d^2 \psi(x_2) + \norm{d(1-x_2)-A(1-x_1)}_2^2,
\end{align}
hence $\norm{d(1-x_2)-A(1-x_1)}_2=0$, i.e., $A(1-x_1)=d(1-x_2)$ and $Ax_1=dx_2$.
By symmetry, we also have $Ax_2=dx_1$.
Therefore we obtain 
\[
A(x_1-x_2) = -d(x_1-x_2),
\]
 which shows $-d \in \Spec(A)$. 

\noindent $((2) \implies (1))$:
By Lemma \ref{lem:saevreal}, we can take a self-adjoint $x \in \ker(d\, \id +A)$. 
Consider the spectral projections $\{ p_\lambda | \lambda \in \Spec(x)\}$ of 
\[
x=\sum_{\lambda} \lambda p_\lambda
 = \sum_{\lambda>0} \lambda (p_\lambda - p_{-\lambda});
\quad x_+ = \sum_{\lambda>0} \lambda p_\lambda;
\quad x_- = \sum_{\lambda>0} \lambda p_{-\lambda}.
\]
In the same way as the proof of Theorem \ref{thm:connbipartite}, we obtain  $Ax_\pm=d x_\mp$ and $x_+ + x_- \in \ker (d\, \id - A)$.
Therefore it follows from Lemma \ref{lem:fcncalcev} that 
$p_\lambda + p_{-\lambda} \in \ker (d \,\id - A)$ for all $\lambda > 0$.
Thus it follows for a fixed $\lambda>0$ that
\begin{align}
A p_{\lambda} = d p_\lambda + d p_{-\lambda} - A p_{-\lambda} 
\leq d p_\lambda + d p_{-\lambda}.
\label{Apbound1}
\end{align}
Now $\lambda p_\lambda \leq x_+$ implies 
\begin{align}
A p_\lambda \leq \lambda^{-1} Ax_+ = \frac{d}{\lambda} x_- 
= \sum_{\mu>0} \frac{d \mu}{\lambda} p_{-\mu}.
\label{Apbound2}
\end{align}
By taking the meet of \eqref{Apbound1} and \eqref{Apbound2} in the lattice of self-adjoint elements in the commutative algebra $C^*(x)$, we obtain
\begin{align}
A p_\lambda \leq (d p_\lambda + d p_{-\lambda}) \wedge \sum_{\mu>0} \frac{d \mu}{\lambda} p_{-\mu} 
= d p_{-\lambda}.
\label{Ap<dp}
\end{align}
Similarly we have $A p_{-\lambda} \leq d p_{\lambda}$, i.e.,
\begin{align}
A p_\lambda = A(1 - p_{-\lambda}) \geq d(1 - p_{\lambda})= d p_{-\lambda}.
\label{Ap>dp}
\end{align}
Combining \eqref{Ap<dp} and \eqref{Ap>dp} we get $A p_\lambda = d p_{-\lambda}$ 
and $A p_{-\lambda} = d p_{\lambda}$.
Hence
$p_\lambda - p_{-\lambda} \in \ker(d \,\id +A)$ for all $\lambda > 0$.
Thus we may initially take $x=x_+ - x_- \in \ker(d \,\id +A)$ for mutually orthogonal nonzero projections $x_\pm \in B$ satisfying
\[
Ax_\pm = dx_\mp.
\]
Then we have a $*$-homomorphism
$f: \C^2 \oplus \C \to B$ defined by $f(e_1)=x_+, f(e_2)=x_-, f(e_3)=1-x_+ - x_-$
that is injective on $\C^2 \oplus 0$.
It satisfies
\begin{align}
3 \braket{e_i|f^\dagger Af | e_i} 
&= 3 \braket{x_\pm | A x_\pm} = 3d \braket{x_\pm | x_\mp} =0 \quad(i=1,2);
\\
3 \braket{e_1|f^\dagger Af | e_2} 
&= 3 \braket{x_+ | A x_-} = 3d \braket{x_+ | x_+} =3d \psi(x_+);
\\
3 \braket{e_i|f^\dagger Af | e_3} 
&= 3 \braket{x_\pm | d1- dx_+ - dx_-} = 0 \quad(i=1,2);
\\
3 \braket{e_3|f^\dagger Af | e_3} 
&= 3d \braket{1-x_+ - x_- | 1-x_+ - x_-} = 3d (1-2\psi(x_+)).
\end{align}
Thus $f^\dagger Af = \begin{pmatrix}
 0 & 3d \psi(x_+) & 0 \\
 3d \psi(x_+) & 0 & 0 \\
 0 & 0 & 3d (1-2\psi(x_+)) 
\end{pmatrix}$,
which gives a graph homomorphism $f^\op: \mathcal{G} \to K_2 \sqcup T_1$.
\end{proof}

\section{Two-colorability and bipartiteness}

It is known that a classical graph is bipartite if and only if it is two-colorable.
We compare the bipartiteness defined in this paper and the local two-colorability introduced in \cite{BGH2022quantum}.

\subsection{$t$-homomorphism}
The gap of bipartiteness and two-colorability arises from the two notions of graph homomorphisms: one is Definition \ref{dfn:adjhom} and the other is the following $t$-homomorphisms:  

\begin{dfn}[{Modified generalization of Brannan, Ganesan, Harris \cite{BGH2022quantum}}] \label{dfn:t-hom}
Let $\mathcal{G}_0 = 
(B_0,\psi_0,A_0,\mathcal{S}_0), \mathcal{G}_1 = (B_1,\psi_1,A_1,\mathcal{S}_1)$ 
be quantum graphs with $\delta_i$-forms $\psi_i$ and quantum relations 
$\mathcal{S}_i=\ran (A_i\bullet \cdot)\subset B(L^2(\mathcal{G}_i))$.
A $t$-homomorphism 
$(f,\mathcal{A}):\mathcal{G}_0 \overset{t}{\to} \mathcal{G}_1$ 
($t \in \{loc,q,qa,qc,C^*,alg\}$) is consisting of a unital $*$-homomorphism 
$f: B_1 \to B_0 \otimes \mathcal{A}$ and a unital $*$-algebra $\mathcal{A}$
satisfying
\begin{align}
f^\dagger (\mathcal{S}_0 \otimes 1_\mathcal{A}) f 
&\subset \mathcal{S}_1 \otimes \mathcal{A},	\label{t-hom}
\end{align}
where $f^\dagger \in B(L^2(\mathcal{G}_0),L^2(\mathcal{G}_1)) \otimes \mathcal{A}$ is the adjoint $(\cdot)^\dagger\otimes(\cdot)^*$ of $f$ as an operator in $B(L^2(\mathcal{G}_1),L^2(\mathcal{G}_0)) \otimes \mathcal{A}$,
and 
\begin{itemize}
\item
$\mathcal{A}=\C$ if $t=loc$ (local, classical); 
\item
$\mathcal{A}$ is finite-dimensional if $t=q$ (quantum);
\item
$\mathcal{A}=\mathcal{R}^\omega$ is the ultrapower of the hyperfinite 
$II_1$-factor $\mathcal{R}$ by a free ultrafilter $\omega$ on $\N$ if $t=qa$ (quantum approximate);
\item
$\mathcal{A}$ is a tracial $C^*$-algebra if $t=qc$ (quantum commuting);
\item
$\mathcal{A}$ is a $C^*$-algebra if $t=C^*$;
\item
$\mathcal{A}$ is a unital $*$-algebra if $t=alg$.
\end{itemize}
These notions of $t$ show what kind of quantum correlation is allowed in the corresponding graph homomorphism game.

We say that a $t$-homomorphism 
$(f,\mathcal{A}):\mathcal{G}_0 \overset{t}{\to} \mathcal{G}_1$ 
is:
\begin{itemize}
\item (vertex-)surjective if $f: B_1 \to B_0 \otimes \mathcal{A}$ is injective.
\end{itemize}
\end{dfn}

This definition means that the pushforward of the edges of $\mathcal{G}_0$ by the mapping $(f,\mathcal{A})$ are edges of $\mathcal{G}_1$.

\begin{rmk}
For a $t$-homomorphism 
$(f,\mathcal{A}):\mathcal{G}_0 \overset{t}{\to} \mathcal{G}_1$,
the best definition of (vertex-)injectivity is not sure. 
If it is a classical homomorphism between classical graphs, then 
$(f,\mathcal{A}=\C)$ is injective if and only if 
$(\delta_0/\delta_1)f$ is a coisometry 
$f f^\dagger = (\delta_1/\delta_0)^2 \id_{B_1} \otimes 1_\mathcal{A}$,
so this is a candidate for the definition of injectivity.
Another weaker candidate is the injectivity of $f^\dagger: B_0 \to B_1 \otimes \mathcal{A}$.

On the other hand, in classical case $(f,\C)$ is surjective if and only if
$f^\dagger f \geq (\delta_1/\delta_0)^2 \id_{B_0} \otimes 1_\mathcal{A}$,
 which may be too strong for the definition of surjectivity of general
$(f,\mathcal{A})$.

Consider a toy model $f: \C^4 \to \C^2 \otimes M_2$ of a quantum $4$-coloring of $2$ vertices $(f,M_2):(\C^2,\tau,0)\overset{q}{\to}K_4$ given by
\[
f(e_1)=e_1 \otimes e_{11}; \quad
f(e_2)=e_2 \otimes e_{11}; \quad
f(e_3)=e_1 \otimes e_{22}; \quad
f(e_4)=e_2 \otimes e_{22}.
\]
Then coisometry condition $ff^\dagger=2\, \id_{\C^2} \otimes 1_{M_2}$ holds, hence $(f,M_2)$ is injective in the strong sense. On the other hand, we have
an injective homomorphism $f$ but
$f^\dagger f=2 [(e_1+e_2)\otimes e_{11} + (e_3+e_4)\otimes e_{22} ]
\not\geq 2\ \id_{\C^4} \otimes 1_{M_2}$, hence $(f,M_2)$ is surjective only in the weak sense as defined above.
\end{rmk}

\begin{notation*}
For a quantum graph $\mathcal{G}=(B.\psi,A)$ and a unital algebra $\mathcal{A}$, we abbreviate by $A\bullet \cdot$ the left Schur product by $A$ acting on the first tensor component of $B(L^2(\mathcal{G})) \otimes \mathcal{A}$.

If we faithfully represent $\mathcal{A}\subset B(H)$ on a Hilbert space $H$, 
we may regard
$f: B_1 \to B_0 \otimes \mathcal{A}$ as 
$f: L^2(\mathcal{G}_1)\otimes H \to H \otimes L^2(\mathcal{G}_0)
 \in B(L^2(\mathcal{G}_1),L^2(\mathcal{G}_0)) \otimes B(H)$.
By this identification, we denote $f$ in string diagrams by
\[
f=
\begin{sd}
\node[circle,draw] (f) at (0,0){$f$};
\draw (-\sep/2,-\sep/2)node[left]{$B_1$}--(f)
	--(\sep/2,\sep/2)node[right]{$B_0$};
\draw[->-] (\sep/2,-\sep/2)node[right]{$H$}--(f);
\draw[->-] (f)--(-\sep/2,\sep/2)node[left]{$H$};
\end{sd},
\]
where $H$ is drawn as oriented strings. 
Even if $\mathcal{A}$ is not a $C^*$-algebra, such a diagram formally makes sense by thinking of the string of $H$ as an indicator of the order of multiplication in $\mathcal{A}$.

Note that $f$ is unital; multiplicative; $*$-preserving (real) respectively if and only if the following are satisfied:
\begin{align}
\begin{sd}
\node[circle,draw] (f) at (0,0){$f$};
\draw (-\sep/4,-\sep/4) arc(45:-315:\unode)
	--(f)--(\sep/2,\sep/2);
\draw[->-] (\sep/2,-\sep/2)--(f);
\draw[->-] (f)--(-\sep/2,\sep/2);
\end{sd}
=
\begin{sd}
\draw (\sep/4,\sep/4) arc(45:-315:\unode)
	--(\sep/2,\sep/2);
\draw[->-] (\sep/2,-\sep/2)--(-\sep/2,\sep/2);
\end{sd}
;
\begin{sd}
\node[circle,draw] (f1) at (-\sep/4,\sep/6){$f$};
\node[circle,draw] (f2) at (\sep/4,-\sep/6){$f$};
\draw (-\sep*3/4,-\sep/2)--(f1) (0,-\sep/2)--(f2)
 	(f1)to[out=45,in=45] coordinate[midway](m) (f2)
	(m)--(\sep/2,\sep/2);
\draw[->-] (\sep*3/4,-\sep/2)--(f2);
\draw (f2)--(f1);
\draw[->-] (f1)--(-\sep*3/4,\sep/2);
\end{sd}
=
\begin{sd}
\node[circle,draw] (f) at (0,0){$f$};
\draw (-\sep/4,-\sep/4)coordinate(m) --(f)
	--(\sep/2,\sep/2);
\draw (-\sep*3/4,-\sep/2)to[out=45,in=180] (m)
	to[out=0,in=45](0,-\sep/2);
\draw[->-] (\sep/2,-\sep/2)--(f);
\draw[->-] (f)--(-\sep/2,\sep/2);
\end{sd}
;
\begin{sd}
\node[circle,draw] (f) at (0,0){$f^\dagger$};
\draw (-\sep/2,-\sep/2)to[out=90,in=135](f)
	to[out=-45,in=-90](\sep/2,\sep/2);
\draw[->-] (-\sep/4,-\sep/2)--(f);
\draw[->-] (f)--(\sep/4,\sep/2);
\end{sd}
=
\begin{sd}
\node[circle,draw] (f) at (0,0){$f$};
\draw (-\sep/2,-\sep/2)--(f)
	--(\sep/2,\sep/2);
\draw[->-] (\sep/2,-\sep/2)--(f);
\draw[->-] (f)--(-\sep/2,\sep/2);
\end{sd}.
	\label{f*-hom}
\end{align}
\end{notation*}

\begin{prop}\label{prop:t-homch}
Let $(f,\mathcal{A}):\mathcal{G}_0 \overset{t}{\to} \mathcal{G}_1$ be as in Definition \ref{dfn:t-hom} without assumption \eqref{t-hom}.
The following are equivalent:
\begin{description}
\item[$(1)$]
The inclusion \eqref{t-hom}: 
$f^\dagger (\mathcal{S}_0 \otimes 1_\mathcal{A}) f 
\subset \mathcal{S}_1 \otimes \mathcal{A}$;
\item[$(2)$] 
$A_1 \bullet (f^\dagger (A_0 \bullet T\otimes 1_\mathcal{A}) f )
=f^\dagger (A_0 \bullet T\otimes 1_\mathcal{A}) f$ 
for any $T \in B(L^2(\mathcal{G}_0))$;
\item[$(3)$]
$\braket{S | f^\dagger (T\otimes 1_\mathcal{A}) f}
=0$ in $\mathcal{A}$
for any $S \in \mathcal{S}_1^\perp$ and $T \in \mathcal{S}_0$, where 
$\braket{S | \cdot}=\Psi_1(S^\dagger \cdot)
=\delta_1^2 \braket{1_{B_1}|S^* \bullet \cdot|1_{B_1}}_{\psi_1}$
 as in \eqref{innerprodonB(L2)} acts on the first tensor component;
\item[$(4)$]
The (adjoint of) diagrammatic definition of quantum graph homomorphism by \cite[Definition 5.4]{Musto2018compositional}:
\begin{align}
\begin{sd}
\path coordinate (m0*) at (\sep*2/3,\sep/2) 
	  coordinate (m1) at ($-1*(m0*)$)
	  coordinate (m0) at (-\sep/3,\sep*5/6)
	  coordinate (m1*) at ($-1*(m0)$)
	  coordinate (a1) at (-\sep/3,\sep)
	  coordinate (b2) at ($-1*(a1)$)
	  coordinate (a2) at (\sep,\sep)
	  coordinate (b1) at ($-1*(a2)$)
	  coordinate (ha) at (-\sep,\sep)
	  coordinate (hb) at ($-1*(ha)$);
\draw[name path=rightB] (b1) to[out=90,in=180] (m1) 
	to[out=0,in=180] node[circle,draw,fill=white](A1){$A_1$} (m1*)
	to[out=0,in=-90] (m0*);
\draw[name path=leftB] (m1) to[out=90,in=180] (m0) 
	to[out=0,in=180] node[circle,draw,fill=white](A0){$A_0$} (m0*)
	to[out=0,in=-90] (a2);
\draw (b2)to[out=90,in=-90] (m1*) (m0)to[out=90,in=-90] (a1);
\draw[->-, name path=H] (hb) to[out=90,in=-90] (ha);
\path [name intersections={of= H and rightB, by= f1}];
\path [name intersections={of= H and leftB, by= f2}];
\path node[draw,circle,fill=white] at (f1) {$f$} 
	node[draw,circle,fill=white] at (f2) {$f$};
\end{sd}
=
\delta_1^2 
\begin{sd}
\path coordinate (m0*) at (\sep*2/3,\sep/2) 
	  coordinate (m1) at ($-1*(m0*)$)
	  coordinate (m0) at (-\sep/3,\sep*5/6)
	  coordinate (m1*) at ($-1*(m0)$)
	  coordinate (a1) at (-\sep/3,\sep)
	  coordinate (b2) at ($-1*(a1)$)
	  coordinate (a2) at (\sep,\sep)
	  coordinate (b1) at ($-1*(a2)$)
	  coordinate (ha) at (-\sep,\sep)
	  coordinate (hb) at ($-1*(ha)$);
\draw[name path=rightB] (b2) 
	to[out=90,in=-90] (m0*);
\draw[name path=leftB] (b1) ++(\sep/4,0)--++(0,\sep/2)
	to[out=90,in=180] (m0) 
	to[out=0,in=180] node[circle,draw,fill=white](A0){$A_0$} (m0*)
	to[out=0,in=-90] (a2);
\draw  (m0)to[out=90,in=-90] (a1);
\draw[->-, name path=H] (hb) to[out=90,in=-90] (ha);
\path [name intersections={of= H and rightB, by= f1}];
\path [name intersections={of= H and leftB, by= f2}];
\path node[draw,circle,fill=white] at (f1) {$f$} 
	node[draw,circle,fill=white] at (f2) {$f$};
\end{sd}.
\end{align}
\end{description}
\end{prop}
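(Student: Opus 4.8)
The plan is to read $(1)$, $(2)$, $(3)$ as three reformulations of one membership statement, and to concentrate the genuinely diagrammatic work in the equivalence with $(4)$.

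First I would record the dictionary underlying everything. Since $A_i\bullet A_i=A_i$ and the Schur product is associative, $A_i\bullet\cdot$ is an idempotent on $B(L^2(\mathcal{G}_i))$ with range $\mathcal{S}_i$; hence $(A_1\bullet\cdot)\otimes\id_\mathcal{A}$ is an idempotent on $B(L^2(\mathcal{G}_1))\otimes\mathcal{A}$ with range $\mathcal{S}_1\otimes\mathcal{A}$, and $X\in\mathcal{S}_1\otimes\mathcal{A}$ holds if and only if $X$ is fixed by it. Using realness of $A_1$, the map $A_1\bullet\cdot$ is the orthogonal projection onto $\mathcal{S}_1$ for the inner product \eqref{innerprodonB(L2)}, so $\mathcal{S}_1=(\mathcal{S}_1^\perp)^\perp$; and because the $\mathcal{A}$-valued pairing is nondegenerate leg by leg (writing $\braket{S|\sum_k x_k\otimes a_k}=\sum_k\braket{S|x_k}\,a_k$ with the $a_k$ linearly independent forces $\braket{S|x_k}=0$), membership $X\in\mathcal{S}_1\otimes\mathcal{A}$ is equivalent to $\braket{S|X}=0$ for every $S\in\mathcal{S}_1^\perp$.

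With this in hand, $(1)\Leftrightarrow(2)$ is immediate: as $T$ ranges over $B(L^2(\mathcal{G}_0))$ the elements $A_0\bullet T$ exhaust $\mathcal{S}_0=\ran(A_0\bullet\cdot)$, so $(1)$ says exactly that each $f^\dagger((A_0\bullet T)\otimes1_\mathcal{A})f$ is fixed by $(A_1\bullet\cdot)\otimes\id_\mathcal{A}$, which is $(2)$. Then $(2)\Leftrightarrow(3)$ is the orthogonality restatement above applied to $X=f^\dagger((A_0\bullet T)\otimes1_\mathcal{A})f$, again using that $A_0\bullet T$ runs over all of $\mathcal{S}_0$. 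Neither step needs any string-diagram manipulation.

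The hard part will be $(3)\Leftrightarrow(4)$. Here I would expand the pairing in $(3)$ diagrammatically through $\braket{S|\cdot}=\delta_1^2\braket{1_{B_1}|S^*\bullet\cdot|1_{B_1}}$ from \eqref{innerprodonB(L2)}, substitute $T=A_0\bullet T'\in\mathcal{S}_0$, and encode the constraint $S\in\mathcal{S}_1^\perp$ via the complementary idempotent, so that $A_0$ and $A_1$ enter the picture precisely as the projections cutting out $\mathcal{S}_0$ and $\mathcal{S}_1$. I would then slide the $f$- and $f^\dagger$-nodes past the multiplication and comultiplication vertices using the unitality, multiplicativity, and realness identities \eqref{f*-hom}, and rearrange with (co)associativity and the Frobenius identity until the configuration matches the Musto--Reutter--Verdon picture in $(4)$; collapsing the capped $1_{B_1}$-loops then produces the scalar $\delta_1^2$ on its right-hand side. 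The genuine obstacle is quantifier bookkeeping: one must verify that the single diagrammatic identity $(4)$, read with its free input and output legs, is equivalent to the whole family of $\mathcal{A}$-valued scalar equations in $(3)$ indexed by all $S\in\mathcal{S}_1^\perp$ and $T\in\mathcal{S}_0$ — that is, that the idempotents $A_0\bullet\cdot$ and $A_1\bullet\cdot$ built into the diagram faithfully reproduce those two quantifiers. Realness of $A_1$ is what makes $A_1\bullet\cdot$ self-adjoint, so that projecting onto $\mathcal{S}_1$ and pairing against $\mathcal{S}_1^\perp$ become adjoint operations and the two sides of $(4)$ can be recognized as transposes of one another across the $f$-nodes.
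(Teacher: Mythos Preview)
Your treatment of $(1)\Leftrightarrow(2)\Leftrightarrow(3)$ is correct and matches the paper's: $A_1\bullet\cdot$ is an idempotent with range $\mathcal S_1$, so $(1)$ is the fixed-point statement $(2)$; and in finite dimensions $\mathcal S_1=(\mathcal S_1^\perp)^\perp$, which together with the leg-by-leg nondegeneracy argument gives $(1)\Leftrightarrow(3)$. One small remark: you invoke realness of $A_1$ to make $A_1\bullet\cdot$ an \emph{orthogonal} projection, but the proposition does not assume $\mathcal G_1$ real, and neither $(1)\Leftrightarrow(2)$ nor $(1)\Leftrightarrow(3)$ needs it.

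The link to $(4)$ is where your plan diverges from the paper and becomes vague. The paper does \emph{not} pass through $(3)$; it goes directly $(2)\Leftrightarrow(4)$, and the mechanism is cleaner than what you outline. Write $(2)$ as a string-diagram equation with a box $T$; since the two sides are linear in $T$ and $T$ ranges over all of $B(L^2(\mathcal G_0))\cong B_0\otimes B_0^*$, the equality for all $T$ is equivalent to the equality obtained by replacing the $T$-box by a pair of open $B_0$-strings. One then bends those open ends (and one $B_1$-string) to the appropriate boundary and uses realness of $f$ from \eqref{f*-hom} to rewrite the resulting $f^\dagger$ as $f$, arriving at $(4)$. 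The factor $\delta_1^2$ is just the normalization $\delta_1^{-2}$ in the definition of $A_1\bullet\cdot$ moved to the other side; it does not come from collapsing any capped $1_{B_1}$-loop.

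Your proposed route via $(3)$ would require handling two quantifiers ($S\in\mathcal S_1^\perp$ and $T\in\mathcal S_0$) and an auxiliary ``complementary idempotent'' $J-A_1$, which does not appear in $(4)$; you would then have to undo that insertion. This is workable in principle but adds steps, and the specific moves you describe (sliding $f$, $f^\dagger$ past vertices using multiplicativity, collapsing loops) do not by themselves deliver the diagram in $(4)$. The essential idea you are missing is the ``open up $T$'' trick: universal quantification over $T\in B(L^2(\mathcal G_0))$ is the same as an identity of morphisms with two extra free $B_0$-wires.
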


\begin{proof}
$((1) \iff (2))$: Since $\mathcal{S}_1 \otimes \mathcal{A}
=\ran(A_1 \bullet \cdot) \otimes \mathcal{A}$ and $A_1 \bullet \cdot$ is a projection, (1) means that $f^\dagger (S\otimes 1_\mathcal{A}) f$ is invariant under the action of $A_1 \bullet \cdot$ for all $S \in \mathcal{S}_0$. 
Thus  (1) is equivalent to
\begin{align}
A_1 \bullet (f^\dagger (A_0 \bullet T\otimes 1_\mathcal{A}) f )
=f^\dagger (A_0 \bullet T \otimes 1_\mathcal{A}) f	
	\qquad \forall T \in B(L^2(\mathcal{G}_0)).
\end{align}

\noindent $((1)\iff(3))$: Note that 
\[
(\mathcal{S}_1^\perp)^\perp \otimes \mathcal{A}
=\{ X \in B(L^2(\mathcal{G}_1))\otimes \mathcal{A} \vert \braket{S|X}=0 \ \forall S \in \mathcal{S}_1^\perp \}.
\] 
Indeed $\subset$ is obvious and $\supset$ is shown by choosing a presentation $X=\sum_j T_j \otimes a_j \in \textrm{(RHS)}$ 
with independent $a_j$'s in $\mathcal{A}$ 
to deduce $\braket{S|T_j}=0$ 
from $\braket{S|X} = \sum_j \braket{S|T_j}  a_j=0$.
Thus (1) is equivalent to (3).

\noindent $((2)\implies(4))$: In string diagrams, (2) is expressed as follows:
\begin{align}
\delta_1^{-2} \delta_0^{-2}
\begin{sd}
\path coordinate (m0) at (\sep/3,\sep/2)
	  coordinate (m0*) at (\sep/3,-\sep/2) 
	  coordinate (m1) at (-\sep/2,\sep*5/6)
	  coordinate (m1*) at (-\sep/2,-\sep*5/6)
	  coordinate (a1) at (-\sep/2,\sep*7/6)
	  coordinate (b1) at (-\sep/2,-\sep*7/6)
	  coordinate (a2) at (\sep,\sep*7/6)
	  coordinate (b2) at (\sep,-\sep*7/6)
	  coordinate (ha) at (\sep/8,\sep*7/6)
	  coordinate (hm) at (-\sep/2,0)
	  coordinate (hb) at (\sep/8,-\sep*7/6);
\draw (m0*)	to[out=180,in=180] node[circle,draw,fill=white](A0){$A_0$} (m0);\draw (m0*)	to[out=0,in=0] node[circle,draw,fill=white](T){$T$} (m0);
\draw (m1*)	to[out=180,in=180] node[circle,draw,fill=white](A1){$A_1$} (m1);
\draw[name path=rightB] (m1*) to[out=0,in=-90] (m0*) ;
\draw[name path=leftB] (m0) to[out=90,in=0] (m1) ;
\draw (b1)to[out=90,in=-90] (m1*) (m1)to[out=90,in=-90] (a1);
\draw[->-, name path=H] (hb) to[out=90,in=-90] (hm)
	--(hm) to[out=90,in=-90] (ha);
\path [name intersections={of= H and rightB, by= f1}];
\path [name intersections={of= H and leftB, by= f2}];
\path node[draw,circle,fill=white] at (f1) {$f$} 
	node[draw,circle,fill=white] at (f2) {$f^\dagger$};
\end{sd}
=
\delta_0^{-2} 
\begin{sd}
\path coordinate (m0) at (\sep/3,\sep/2)
	  coordinate (m0*) at (\sep/3,-\sep/2) 
	  coordinate (m1) at (-\sep/2,\sep*5/6)
	  coordinate (m1*) at (-\sep/2,-\sep*5/6)
	  coordinate (a1) at (-\sep/2,\sep*7/6)
	  coordinate (b1) at (-\sep/2,-\sep*7/6)
	  coordinate (a2) at (\sep,\sep*7/6)
	  coordinate (b2) at (\sep,-\sep*7/6)
	  coordinate (ha) at (\sep/8,\sep*7/6)
	  coordinate (hm) at (-\sep/2,0)
	  coordinate (hb) at (\sep/8,-\sep*7/6);
\draw (m0*)	to[out=180,in=180] node[circle,draw,fill=white](A0){$A_0$} (m0);\draw (m0*)	to[out=0,in=0] node[circle,draw,fill=white](T){$T$} (m0);
\draw[name path=rightB] (b1) to[out=90,in=-90] (m0*) ;
\draw[name path=leftB] (m0) to[out=90,in=-90] (a1) ;
\draw[->-, name path=H] (hb) to[out=90,in=-90] (hm)
	--(hm) to[out=90,in=-90] (ha);
\path [name intersections={of= H and rightB, by= f1}];
\path [name intersections={of= H and leftB, by= f2}];
\path node[draw,circle,fill=white] at (f1) {$f$} 
	node[draw,circle,fill=white] at (f2) {$f^\dagger$};
\end{sd}.
\end{align}
Since $T \in B(L^2(\mathcal{G}_0)) \cong B_0 \otimes B_0^*$ is arbitrary, we may replace 
$T=
\begin{sd}
\node[circle,draw,fill=white] (T) at (0,0) {$T$};
\draw (T.north)--++(0,\sep/8) (T.south)--++(0,-\sep/8);
\end{sd}$ with open ends 
$\begin{sd}
\node[draw,dashed] (T) at (0,0) {};
\draw (T.north)--++(0,\sep/8) (T.south)--++(0,-\sep/8);
\end{sd}$ of strings of $B_0$. 
We move the open ends to the top 
(insert $\begin{sd}
\node[draw,dashed,below] (T) at (0,0) {};
\draw (T.north)to[out=-90,in=-90](\sep/2,0)
	(T.south)to[out=90,in=-90](\sep*3/4,0);
\end{sd}$)
and move the top left string of $B_1$ to the bottom
(postcompose the right end of $\begin{sd}
\draw (0,0)to[out=90,in=90](\sep/2,0);
\end{sd}$) 
to obtain 
\begin{align}
\begin{sd}
\path coordinate (m0) at (\sep/3,\sep/2)
	  coordinate (m0*) at (\sep/2,-\sep/3) 
	  coordinate (m1) at (-\sep*2/3,\sep*2/3)
	  coordinate (m1*) at (-\sep/2,-\sep*5/6)
	  coordinate (r1) at (-\sep/2,\sep)
	  coordinate (b1) at (-\sep,-\sep*7/6)
	  coordinate (b2) at (-\sep/2,-\sep*7/6)
	  coordinate (a1) at (\sep/2,\sep*7/6)
	  coordinate (a2) at (\sep,\sep*7/6)
	  coordinate (ha) at (0,\sep*7/6)
	  coordinate (hm) at (-\sep/8,0)
	  coordinate (hb) at (0,-\sep*7/6);
\draw (m0*)	to[out=180,in=-90] node[circle,draw,fill=white](A0){$A_0$} (m0);\draw (m0*)	to[out=0,in=-90] (a2) (m0)to[out=0,in=-90] (a1);
\draw (m1*)	to[out=180,in=0] node[circle,draw,fill=white](A1){$A_1$} (m1);
\draw[name path=rightB] (m1*) to[out=0,in=-90] (m0*) ;
\draw[name path=leftB] (m0) to[out=180,in=0] (r1) to[out=180,in=90] (m1);
\draw (b2)to[out=90,in=-90] (m1*) (m1)to[out=180,in=90] (b1);
\draw[->-, name path=H] (hb) to[out=90,in=-90] (hm)
	--(hm) to[out=90,in=-90] (ha);
\path [name intersections={of= H and rightB, by= f1}];
\path [name intersections={of= H and leftB, by= f2}];
\path node[draw,circle,fill=white] at (f1) {$f$} 
	node[draw,circle,fill=white] at (f2) {$f^\dagger$};
\end{sd}
=
\delta_1^{2} 
\begin{sd}
\path coordinate (m0) at (\sep/3,\sep/2)
	  coordinate (m0*) at (\sep/2,-\sep/3) 
	  coordinate (m1) at (-\sep*2/3,\sep*2/3)
	  coordinate (m1*) at (-\sep/2,-\sep*5/6)
	  coordinate (r1) at (-\sep/3,\sep*2/3)
	  coordinate (b1) at (-\sep,-\sep*7/6)
	  coordinate (b2) at (-\sep/2,-\sep*7/6)
	  coordinate (a1) at (\sep/2,\sep*7/6)
	  coordinate (a2) at (\sep,\sep*7/6)
	  coordinate (ha) at (0,\sep*7/6)
	  coordinate (hm) at (-\sep/8,0)
	  coordinate (hb) at (0,-\sep*7/6);
\draw (m0*)	to[out=180,in=-90] node[circle,draw,fill=white](A0){$A_0$} (m0);\draw (m0*)	to[out=0,in=-90] (a2) (m0)to[out=0,in=-90] (a1);
\draw[name path=rightB] (b2) to[out=90,in=-90] (m0*) ;
\draw[name path=leftB] (m0) to[out=180,in=0] (r1) to[out=180,in=90] (b1);
\draw[->-, name path=H] (hb) to[out=90,in=-90] (hm)
	--(hm) to[out=90,in=-90] (ha);
\path [name intersections={of= H and rightB, by= f1}];
\path [name intersections={of= H and leftB, by= f2}];
\path node[draw,circle,fill=white] at (f1) {$f$} 
	node[draw,circle,fill=white] at (f2) {$f^\dagger$};
\end{sd}.
\end{align}
Therefore it follows by the realness \eqref{f*-hom} of $f$ that
\begin{align}
\begin{sd}
\path coordinate (m0*) at (\sep*2/3,\sep/2) 
	  coordinate (m1) at ($-1*(m0*)$)
	  coordinate (m0) at (-\sep/3,\sep*5/6)
	  coordinate (m1*) at ($-1*(m0)$)
	  coordinate (a1) at (-\sep/3,\sep)
	  coordinate (b2) at ($-1*(a1)$)
	  coordinate (a2) at (\sep,\sep)
	  coordinate (b1) at ($-1*(a2)$)
	  coordinate (ha) at (-\sep,\sep)
	  coordinate (hb) at ($-1*(ha)$);
\draw[name path=rightB] (b1) to[out=90,in=180] (m1) 
	to[out=0,in=180] node[circle,draw,fill=white](A1){$A_1$} (m1*)
	to[out=0,in=-90] (m0*);
\draw[name path=leftB] (m1) to[out=90,in=180] (m0) 
	to[out=0,in=180] node[circle,draw,fill=white](A0){$A_0$} (m0*)
	to[out=0,in=-90] (a2);
\draw (b2)to[out=90,in=-90] (m1*) (m0)to[out=90,in=-90] (a1);
\draw[->-, name path=H] (hb) to[out=90,in=-90] (ha);
\path [name intersections={of= H and rightB, by= f1}];
\path [name intersections={of= H and leftB, by= f2}];
\path node[draw,circle,fill=white] at (f1) {$f$} 
	node[draw,circle,fill=white] at (f2) {$f$};
\end{sd}
=
\delta_1^2 
\begin{sd}
\path coordinate (m0*) at (\sep*2/3,\sep/2) 
	  coordinate (m1) at ($-1*(m0*)$)
	  coordinate (m0) at (-\sep/3,\sep*5/6)
	  coordinate (m1*) at ($-1*(m0)$)
	  coordinate (a1) at (-\sep/3,\sep)
	  coordinate (b2) at ($-1*(a1)$)
	  coordinate (a2) at (\sep,\sep)
	  coordinate (b1) at ($-1*(a2)$)
	  coordinate (ha) at (-\sep,\sep)
	  coordinate (hb) at ($-1*(ha)$);
\draw[name path=rightB] (b2) 
	to[out=90,in=-90] (m0*);
\draw[name path=leftB] (b1) ++(\sep/4,0)--++(0,\sep/2)
	to[out=90,in=180] (m0) 
	to[out=0,in=180] node[circle,draw,fill=white](A0){$A_0$} (m0*)
	to[out=0,in=-90] (a2);
\draw  (m0)to[out=90,in=-90] (a1);
\draw[->-, name path=H] (hb) to[out=90,in=-90] (ha);
\path [name intersections={of= H and rightB, by= f1}];
\path [name intersections={of= H and leftB, by= f2}];
\path node[draw,circle,fill=white] at (f1) {$f$} 
	node[draw,circle,fill=white] at (f2) {$f$};
\end{sd}.
\end{align}
$((4)\implies(2))$: We can transform the diagrams conversely to go back from (4) to (2).
\end{proof}

Note that \cite{BGH2022quantum} defined the quantum-to-classical $t$-homomorphisms by the following conditions instead of \eqref{t-hom}
to omit self-loops in particular for the coloring problem.
\begin{align}
f^\dagger (\mathcal{S}_0 \cap \mathcal{S}_{T(B_0,\psi_0)}^\perp \otimes 1_\mathcal{A}) f 
&\subset \mathcal{S}_1 \otimes \mathcal{A};	\label{t-hom1}
\\
f^\dagger (\mathcal{S}_{T(B_0,\psi_0)} \otimes 1_\mathcal{A}) f  
&\subset \mathcal{S}_{T(B_1,\psi_1)} \otimes \mathcal{A}.   \label{t-hom2}
\end{align}
\eqref{t-hom} and \label{t-hom1} coincide under some assumptions, and
as a generalization of \cite[Lemma 4.8]{BGH2022quantum}, the second condition \eqref{t-hom2} is redundant as shown below.

\begin{lem}
Let $(f,\mathcal{A}):\mathcal{G}_0 \overset{t}{\to} \mathcal{G}_1$ be as in Definition \ref{dfn:t-hom} without assumption \eqref{t-hom}.
\begin{description}
\item[$(1)$]
The inclusion \eqref{t-hom2} always holds. 
\item[$(2)$]
\eqref{t-hom} is equivalent to \eqref{t-hom1} if $\mathcal{G}_0$ is irreflexive, or if $\mathcal{G}_0$ has no partial loops and $\mathcal{G}_1$ is reflexive.
\end{description}
\end{lem}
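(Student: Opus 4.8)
The plan is to reduce statement (2) to statement (1) together with a Schur-product decomposition of the edge space $\mathcal{S}_0$, and to prove (1) from the multiplicativity of $f$ alone. Throughout I use the trivial-graph identification $\mathcal{S}_{T_i}\coloneqq\mathcal{S}_{T(B_i,\psi_i)}=\ran(\id_{B_i}\bullet\,\cdot)=\rho_i(B_i)$ from the example, and the fact that $X\mapsto(X\bullet\cdot)$ is a faithful representation of the Schur algebra $B^\op\otimes B$, so that $X$ is determined by the operator $X\bullet\cdot$.

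For (1), I would work directly with the unital $*$-homomorphism $f\colon B_1\to B_0\otimes\mathcal{A}$ and its adjoint. The key point is that $f^\dagger\colon B_0\otimes\mathcal{A}\to B_1\otimes\mathcal{A}$ is a left $B_1$-module map, namely $f^\dagger(f(y)\,W)=y\,f^\dagger(W)$ for $y\in B_1$ and $W\in B_0\otimes\mathcal{A}$, where $y$ acts on the target by left multiplication on the $B_1$-leg. This follows at once from the defining relation $\braket{\eta|f^\dagger(W)}=\braket{f(\eta)|W}$ of the GNS adjoint together with the multiplicativity and unitality of $f$ (verified on the first tensor leg, carrying the $\mathcal{A}$-leg along), and it needs \emph{no} traciality assumption. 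Applying it to $W=b\otimes 1_\mathcal{A}$ and a generator $\rho_0(b)\in\mathcal{S}_{T_0}=\rho_0(B_0)$, and noting $(\rho_0(b)\otimes 1_\mathcal{A})f(y)=f(y)\,(b\otimes 1_\mathcal{A})$, gives
\[
f^\dagger(\rho_0(b)\otimes 1_\mathcal{A})f=(\rho_1\otimes\id_\mathcal{A})\bigl(f^\dagger(b\otimes 1_\mathcal{A})\bigr)\in\rho_1(B_1)\otimes\mathcal{A}=\mathcal{S}_{T_1}\otimes\mathcal{A},
\]
which is exactly \eqref{t-hom2}. Equivalently this is the diagrammatic identity of Proposition \ref{prop:t-homch}(4) specialized to $A_0=A_1=\id$, where the two adjacency boxes disappear and both sides collapse to the same diagram by the relations \eqref{f*-hom}; I would present whichever form is cleaner once the normalization is checked.

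For (2), the implication \eqref{t-hom}$\Rightarrow$\eqref{t-hom1} is immediate since $\mathcal{S}_0\cap\mathcal{S}_{T_0}^\perp\subset\mathcal{S}_0$, so the content is the converse. I would introduce the Schur projections $P=A_0\bullet\cdot$ and $Q=\id\bullet\cdot$ on $B(L^2(\mathcal{G}_0))$, with $Q$ the orthogonal projection onto $\mathcal{S}_{T_0}$ and $P$ orthogonal by realness. Since $PQ=(A_0\bullet\id)\bullet\cdot$ and $QP=(\id\bullet A_0)\bullet\cdot$, the hypothesis that $\mathcal{G}_0$ has \emph{no partial loops}, $A_0\bullet\id=\id\bullet A_0$, is precisely $PQ=QP$. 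For commuting projections this yields
\[
\mathcal{S}_0=(\mathcal{S}_0\cap\mathcal{S}_{T_0})\oplus(\mathcal{S}_0\cap\mathcal{S}_{T_0}^\perp).
\]
The second summand is carried into $\mathcal{S}_1\otimes\mathcal{A}$ by \eqref{t-hom1}; the first lies in $\mathcal{S}_{T_0}$, so by part (1) it is carried into $\mathcal{S}_{T_1}\otimes\mathcal{A}$, and $\mathcal{S}_{T_1}\subset\mathcal{S}_1$ holds exactly when $\mathcal{G}_1$ is reflexive, since $\ran(\id\bullet\cdot)\subset\ran(A_1\bullet\cdot)$ is equivalent to $A_1\bullet\id=\id$. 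Combining the two summands gives \eqref{t-hom}. For the alternative hypothesis that $\mathcal{G}_0$ is irreflexive, $A_0\bullet\id=0$ gives $PQ=0$, hence $QP=(PQ)^\dagger=0$ by self-adjointness of $P,Q$, so $\mathcal{S}_0\perp\mathcal{S}_{T_0}$, i.e.\ $\mathcal{S}_0\subset\mathcal{S}_{T_0}^\perp$; then \eqref{t-hom1} is literally \eqref{t-hom}.

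The main obstacle is part (1): one must verify the module identity (or, equivalently, the collapse of the diagram in Proposition \ref{prop:t-homch}(4)) carefully in the general non-tracial setting and for an arbitrary coefficient algebra $\mathcal{A}$, keeping track of the $\delta_1^2$ normalization and of the involution on $\mathcal{A}$ hidden in $f^\dagger=(\cdot)^\dagger\otimes(\cdot)^*$. The remaining delicacy in (2) is purely bookkeeping about the \emph{noncommutative} Schur product: one must resist treating $\bullet$ as commutative and instead use that ``no partial loops'' is exactly the commutation $PQ=QP$, together with the fact that for real quantum graphs $P$ is an orthogonal projection so that $PQ=0\iff QP=0$.
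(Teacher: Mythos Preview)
Your proof is correct and aligns with the paper's argument. For part (2) the reasoning is identical: the paper also decomposes $\mathcal{S}_0=(\mathcal{S}_0\cap\mathcal{S}_{T_0})\oplus(\mathcal{S}_0\cap\mathcal{S}_{T_0}^\perp)$ under the no-partial-loops assumption and then applies \eqref{t-hom1} and part (1), though it simply asserts the decomposition whereas you explain it via the commutation $PQ=QP$ of Schur projections --- a useful clarification.

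For part (1) the paper works purely diagrammatically, which is your second alternative: it writes out the diagram identity of Proposition~\ref{prop:t-homch}(4) with $A_0=A_1=\id$ and collapses it using multiplicativity of $f$ from \eqref{f*-hom}, the Frobenius equality, and the $\delta_1$-form relation $m m^\dagger=\delta_1^2\id$. Your primary algebraic route via the left $B_1$-module property $f^\dagger\lambda(f(y))=(\lambda_1(y)\otimes 1)f^\dagger$ is a genuine reformulation of the same content: it is what the diagrammatic identity says once you undo the string deformations, and it may be more transparent for readers less comfortable with string calculus. The only point to tighten is the justification ``follows at once from $\braket{\eta|f^\dagger(W)}=\braket{f(\eta)|W}$'': for general $\mathcal{A}$ (especially $t=alg$) there is no inner product on $B_0\otimes\mathcal{A}$, so the module identity should instead be obtained by taking the $(\cdot)^\dagger\otimes(\cdot)^*$-adjoint of the multiplicativity relation $\lambda(f(y))\circ f=f\circ(\lambda_1(y)\otimes 1)$ combined with $f(y)^*=f(y^*)$, or equivalently by the formal diagram manipulation the paper performs. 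You already flag this as the main obstacle, so the gap is only expository.
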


\begin{proof}
$(1)$ Recall that the adjacency matrix of the trivial graph $T(B_i,\psi_i)$ is $\id_{B_i}$. 
Thus \eqref{t-hom2} is equivalent to
\begin{align}
\begin{sd}
\path coordinate (m0*) at (\sep*2/3,\sep/2) 
	  coordinate (m1) at ($-1*(m0*)$)
	  coordinate (m0) at (-\sep/3,\sep*5/6)
	  coordinate (m1*) at ($-1*(m0)$)
	  coordinate (a1) at (-\sep/3,\sep)
	  coordinate (b2) at ($-1*(a1)$)
	  coordinate (a2) at (\sep,\sep)
	  coordinate (b1) at ($-1*(a2)$)
	  coordinate (ha) at (-\sep,\sep)
	  coordinate (hb) at ($-1*(ha)$);
\draw[name path=rightB] (b1) to[out=90,in=180] (m1) 
	to[out=0,in=180]  (m1*)
	to[out=0,in=-90] (m0*);
\draw[name path=leftB] (m1) to[out=90,in=180] (m0) 
	to[out=0,in=180]  (m0*)
	to[out=0,in=-90] (a2);
\draw (b2)to[out=90,in=-90] (m1*) (m0)to[out=90,in=-90] (a1);
\draw[->-, name path=H] (hb) to[out=90,in=-90] (ha);
\path [name intersections={of= H and rightB, by= f1}];
\path [name intersections={of= H and leftB, by= f2}];
\path node[draw,circle,fill=white] at (f1) {$f$} 
	node[draw,circle,fill=white] at (f2) {$f$};
\end{sd}
&=
\delta_1^2 
\begin{sd}
\path coordinate (m0*) at (\sep*2/3,\sep/2) 
	  coordinate (m1) at ($-1*(m0*)$)
	  coordinate (m0) at (-\sep/3,\sep*5/6)
	  coordinate (m1*) at ($-1*(m0)$)
	  coordinate (a1) at (-\sep/3,\sep)
	  coordinate (b2) at ($-1*(a1)$)
	  coordinate (a2) at (\sep,\sep)
	  coordinate (b1) at ($-1*(a2)$)
	  coordinate (ha) at (-\sep,\sep)
	  coordinate (hb) at ($-1*(ha)$);
\draw[name path=rightB] (b2) 
	to[out=90,in=-90] (m0*);
\draw[name path=leftB] (b1) ++(\sep/4,0)--++(0,\sep/2)
	to[out=90,in=180] (m0) 
	to[out=0,in=180]  (m0*)
	to[out=0,in=-90] (a2);
\draw  (m0)to[out=90,in=-90] (a1);
\draw[->-, name path=H] (hb) to[out=90,in=-90] (ha);
\path [name intersections={of= H and rightB, by= f1}];
\path [name intersections={of= H and leftB, by= f2}];
\path node[draw,circle,fill=white] at (f1) {$f$} 
	node[draw,circle,fill=white] at (f2) {$f$};
\end{sd}.
\end{align}
This is proved by the multiplicativity \eqref{f*-hom} of $f$ and Frobenius equality:
\begin{align}
\begin{sd}
\path coordinate (m0*) at (\sep*2/3,\sep/2) 
	  coordinate (m1) at ($-1*(m0*)$)
	  coordinate (m0) at (-\sep/3,\sep*5/6)
	  coordinate (m1*) at ($-1*(m0)$)
	  coordinate (a1) at (-\sep/3,\sep)
	  coordinate (b2) at ($-1*(a1)$)
	  coordinate (a2) at (\sep,\sep)
	  coordinate (b1) at ($-1*(a2)$)
	  coordinate (ha) at (-\sep,\sep)
	  coordinate (hb) at ($-1*(ha)$);
\draw[name path=rightB] (b1) to[out=90,in=180] (m1) 
	to[out=0,in=180]  (m1*)
	to[out=0,in=-90] (m0*);
\draw[name path=leftB] (m1) to[out=90,in=180] (m0) 
	to[out=0,in=180]  (m0*)
	to[out=0,in=-90] (a2);
\draw (b2)to[out=90,in=-90] (m1*) (m0)to[out=90,in=-90] (a1);
\draw[->-, name path=H] (hb) to[out=90,in=-90] (ha);
\path [name intersections={of= H and rightB, by= f1}];
\path [name intersections={of= H and leftB, by= f2}];
\path node[draw,circle,fill=white] at (f1) {$f$} 
	node[draw,circle,fill=white] at (f2) {$f$};
\end{sd}
&=
\begin{sd}
\path coordinate (m0*) at (\sep/4,\sep*2/3) 
	  coordinate (m1) at ($-1*(m0*)$)
	  coordinate (m0) at (\sep/4,\sep/3)
	  coordinate (m1*) at ($-1*(m0)$)
	  coordinate (a1) at (-\sep/3,\sep)
	  coordinate (b2) at ($-1*(a1)$)
	  coordinate (a2) at (\sep,\sep)
	  coordinate (b1) at ($-1*(a2)$)
	  coordinate (ha) at (-\sep,\sep)
	  coordinate (Bl) at (-\sep*2/3,0)
	  coordinate (Br) at (\sep*2/3,0)
 	  coordinate (hb) at ($-1*(ha)$);
\draw[name path=rightB] (m1*)	to[out=0,in=-90] (Br) to[out=90,in=0] (m0);
\draw[name path=leftB] (m1*) to[out=180,in=-90] (Bl) to[out=90,in=180] (m0); 
\draw (a1) to[out=-90,in=180]  (m0*)
	to[out=0,in=-90] (a2)
	(m0*)to (m0);
\draw (b1) to[out=90,in=180] (m1) 
	to[out=0,in=90]  (b2)
	(m1)to (m1*);
\draw[->-, name path=H] (hb) to[out=90,in=-90] (ha);
\path [name intersections={of= H and rightB, by= f1}];
\path [name intersections={of= H and leftB, by= f2}];
\path node[draw,circle,fill=white] at (f1) {$f$} 
	node[draw,circle,fill=white] at (f2) {$f$};
\end{sd}
\overset{\eqref{f*-hom}}{=}
\begin{sd}
\path coordinate (m0*) at (\sep/4,\sep*2/3) 
	  coordinate (m1) at (-\sep/4,-\sep*3/4)
	  coordinate (m0) at (-\sep/4,-\sep/8)
	  coordinate (m1*) at (-\sep/4,-\sep/2)
	  coordinate (a1) at (-\sep/3,\sep)
	  coordinate (b2) at ($-1*(a1)$)
	  coordinate (a2) at (\sep,\sep)
	  coordinate (b1) at ($-1*(a2)$)
	  coordinate (ha) at (-\sep,\sep)
 	  coordinate (hb) at ($-1*(ha)$);
\draw[name path=B] (m0) to[out=90,in=-90] (m0*);
\draw (m1*) to[out=180,in=180] (m0) to[out=0,in=0] (m1*); 
\draw (a1) to[out=-90,in=180]  (m0*)
	to[out=0,in=-90] (a2);
\draw (b1) to[out=90,in=180] (m1) 
	to[out=0,in=90]  (b2)
	(m1)to (m1*);
\draw[->-, name path=H] (hb)--++(0,\sep/2) to[out=90,in=-90] (ha);
\path [name intersections={of= H and B, by= f1}];
\path node[draw,circle,fill=white] at (f1) {$f$} ;
\end{sd}
\\
&\overset{(\delta_1\text{-form})}{=}
\delta_1^2
\begin{sd}
\path coordinate (m0*) at (\sep/4,\sep*2/3) 
	  coordinate (m1) at (-\sep/4,-\sep*2/3)
	  coordinate (m0) at (-\sep/4,-\sep/2)
	  coordinate (m1*) at (-\sep/4,-\sep/2)
	  coordinate (a1) at (-\sep/3,\sep)
	  coordinate (b2) at ($-1*(a1)$)
	  coordinate (a2) at (\sep,\sep)
	  coordinate (b1) at ($-1*(a2)$)
	  coordinate (ha) at (-\sep,\sep)
 	  coordinate (hb) at ($-1*(ha)$);
\draw[name path=B] (m1) to[out=90,in=-90] (m0*);
\draw (a1) to[out=-90,in=180]  (m0*)
	to[out=0,in=-90] (a2);
\draw (b1) to[out=90,in=180] (m1) 
	to[out=0,in=90]  (b2);
\draw[->-, name path=H] (hb) to[out=90,in=-90] (ha);
\path [name intersections={of= H and B, by= f1}];
\path node[draw,circle,fill=white] at (f1) {$f$} ;
\end{sd}
\overset{\eqref{f*-hom}}{=}
\delta_1^2 
\begin{sd}
\path coordinate (m0*) at (\sep*2/3,\sep/2) 
	  coordinate (m1) at ($-1*(m0*)$)
	  coordinate (m0) at (-\sep/3,\sep*5/6)
	  coordinate (m1*) at ($-1*(m0)$)
	  coordinate (a1) at (-\sep/3,\sep)
	  coordinate (b2) at ($-1*(a1)$)
	  coordinate (a2) at (\sep,\sep)
	  coordinate (b1) at ($-1*(a2)$)
	  coordinate (ha) at (-\sep,\sep)
	  coordinate (hb) at ($-1*(ha)$);
\draw[name path=rightB] (b2) 
	to[out=90,in=-90] (m0*);
\draw[name path=leftB] (b1) ++(\sep/4,0)--++(0,\sep/2)
	to[out=90,in=180] (m0) 
	to[out=0,in=180]  (m0*)
	to[out=0,in=-90] (a2);
\draw  (m0)to[out=90,in=-90] (a1);
\draw[->-, name path=H] (hb) to[out=90,in=-90] (ha);
\path [name intersections={of= H and rightB, by= f1}];
\path [name intersections={of= H and leftB, by= f2}];
\path node[draw,circle,fill=white] at (f1) {$f$} 
	node[draw,circle,fill=white] at (f2) {$f$};
\end{sd}.
\end{align}
\noindent $(2)$
(i) If $\mathcal{G}_0$ is irreflexive, then 
$\mathcal{S}_0 \subset 
\mathcal{S}_{T(B_0,\psi_0)}^\perp=\mathcal{S}_{K(B_0,\psi_0)}$. 
Thus \eqref{t-hom1} is exactly equal to \eqref{t-hom}.
(ii) Note that \eqref{t-hom} always implies \eqref{t-hom1} by the trivial inclusion
\[
f^\dagger(\mathcal{S}_0 \cap \mathcal{S}_{T(B_0,\psi_0)}^\perp 
	\otimes 1_\mathcal{A})f
\subset f^\dagger(\mathcal{S}_0 \otimes 1_\mathcal{A})f
\overset{\eqref{t-hom}}{\subset} \mathcal{S}_1 \otimes \mathcal{A}.
\]
If $\mathcal{G}_1$ is reflexive, then 
$\mathcal{S}_{T(B_1,\psi_1)} \subset \mathcal{S}_1$,
and no partial loops means that 
$\mathcal{S}_0=\mathcal{S}_0 \cap \mathcal{S}_{T(B_0,\psi_0)}^\perp 
	\oplus \mathcal{S}_0 \cap \mathcal{S}_{T(B_0,\psi_0)}$ 
gives an orthogonal decomposition.
Thus \eqref{t-hom2} and \eqref{t-hom1} implies
\begin{align}
f^\dagger(\mathcal{S}_0 \otimes 1_\mathcal{A})f
&\subset f^\dagger((\mathcal{S}_0 \cap \mathcal{S}_{T(B_0,\psi_0)}^\perp 
	\oplus \mathcal{S}_{T(B_0,\psi_0)}) \otimes 1_\mathcal{A})f
\\
&\overset{\eqref{t-hom1}}{\subset}
	 (\mathcal{S}_1 + \mathcal{S}_{T(B_1,\psi_1)}) \otimes \mathcal{A}
\overset{\eqref{t-hom2}}{=} \mathcal{S}_1 \otimes \mathcal{A}.
\end{align}
\end{proof}

\begin{rmk}
For a quantum-to-classical $t$-homomorphism $(f,\mathcal{A}):\mathcal{G}_0 \overset{t}{\to} \mathcal{G}_1=(\C^n,\tau,A_1)$, Proposition \ref{prop:t-homch} (4) is equivalent to the existence of projections $P_1,...,P_n \in B_0 \otimes \mathcal{A}$ satisfying 
$P_i(\mathcal{S}_0\otimes \mathcal{A})P_j=0$ for all $(i,j)$ 
with $\braket{e_i|A_1|e_j}=0$.
Indeed, RHS$-$LHS of (4) with imput $e_i \otimes e_j$ yields
\[
0=n^{-1}
\begin{sd}
\path coordinate (m0*) at (\sep*2/3,\sep/2) 
	  coordinate (m1) at ($-1*(m0*)$)
	  coordinate (m0) at (-\sep/3,\sep*5/6)
	  coordinate (m1*) at ($-1*(m0)$)
	  coordinate (a1) at (-\sep/3,\sep)
	  coordinate (b2) at ($-1*(a1)$) (b2)node[below,draw,circle](ej){$e_j$}
	  coordinate (a2) at (\sep,\sep)
	  coordinate (b1) at ($-1*(a2)$) (b1)node[below,draw,circle](ei){$e_i$}
	  coordinate (ha) at (-\sep,\sep)
	  coordinate (hb) at ($-1*(ha)$);
\draw[name path=rightB] (b1) to[out=90,in=180] (m1) 
	to[out=0,in=180] node[circle,draw,fill=white](A1){$A_1^c$} (m1*)
	to[out=0,in=-90] (m0*);
\draw[name path=leftB] (m1) to[out=90,in=180] (m0) 
	to[out=0,in=180] node[circle,draw,fill=white](A0){$A_0$} (m0*)
	to[out=0,in=90] ([xshift=\sep/4]hb);
\draw (b2)to[out=90,in=-90] (m1*) (m0)to[out=90,in=-90] (a1);
\draw[->-, name path=H] (hb) to[out=90,in=-90] (ha);
\path [name intersections={of= H and rightB, by= f1}];
\path [name intersections={of= H and leftB, by= f2}];
\path node[draw,circle,fill=white] at (f1) {$f$} 
	node[draw,circle,fill=white] at (f2) {$f$};
\end{sd}
=f(e_i)(A_0 \otimes 1_\mathcal{A})f(e_j)
\]
where $A_1^c=J-A_1$ is the complement of $A_1$ satisfying $n\braket{e_i|A_1^c|e_j}=1$. 
We may put $P_i=f(e_i)$ and take Schur product with $\mathcal{S}_0$ from the right to obtain $P_i(\mathcal{S}_0\otimes \mathcal{A})P_j=0$.
Conversely, if we have $P_i$'s, then the desired $f$ is given by $f(e_i)=P_i$.
\end{rmk}

The notion of local homomorphism is stronger than that of graph homomorphism as follows.

\begin{prop}\label{prop:lochom=>hom}
Let $(f,\C):\mathcal{G}_0 \overset{loc}{\to} \mathcal{G}_1$ be a $loc$-homomorphism. Then $f^\op:\mathcal{G}_0 \overset{}{\to} \mathcal{G}_1$ is a graph homomorphism.
\end{prop}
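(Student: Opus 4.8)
The plan is to recognize that a $loc$-homomorphism is exactly a graph homomorphism in which the Schur condition has been imposed against \emph{every} test operator, rather than against the single operator that Definition \ref{dfn:adjhom} requires; the implication is therefore obtained by specializing to the correct input.

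First I would unwind the definitions. Since $t=loc$ forces $\mathcal{A}=\C$, the datum $(f,\C)$ is simply a unital $*$-homomorphism $f: B_1 \to B_0 \otimes \C = B_0$, which is already the underlying map of a graph homomorphism $f^\op:\mathcal{G}_0 \to \mathcal{G}_1$ in the sense of Definition \ref{dfn:adjhom}; moreover the adjoint $f^\dagger$ appearing in Definition \ref{dfn:t-hom} is, for $\mathcal{A}=\C$, the same GNS adjoint $f^\dagger: B_0 \to B_1$ used in Definition \ref{dfn:adjhom}. Hence the only remaining task is to verify the adjacency condition $A_1 \bullet (f^\dagger A_0 f) = f^\dagger A_0 f$.

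Next I would feed the hypothesis \eqref{t-hom} into Proposition \ref{prop:t-homch}. With $\mathcal{A}=\C$, its condition (2) reads
\[
A_1 \bullet \bigl( f^\dagger (A_0 \bullet T) f \bigr) = f^\dagger (A_0 \bullet T) f \qquad \text{for all } T \in B(L^2(\mathcal{G}_0)).
\]
I would then specialize to $T = A_0$: by Schur idempotence $A_0 \bullet A_0 = A_0$ of the adjacency matrix, this identity collapses to
\[
A_1 \bullet (f^\dagger A_0 f) = f^\dagger A_0 f,
\]
which is precisely the condition of Definition \ref{dfn:adjhom}. Equivalently one may invoke condition (1): since $A_0 = A_0 \bullet A_0 \in \mathcal{S}_0 = \ran(A_0 \bullet \cdot)$, the inclusion $f^\dagger \mathcal{S}_0 f \subset \mathcal{S}_1$ puts $f^\dagger A_0 f$ into $\mathcal{S}_1 = \ran(A_1 \bullet \cdot)$, and as $A_1 \bullet \cdot$ is idempotent its range coincides with its fixed-point set, yielding the same equation.

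I do not expect a genuine obstacle: the whole content sits in Proposition \ref{prop:t-homch} together with the remark that a graph homomorphism tests the Schur condition only against $A_0$ itself, while a $loc$-homomorphism tests it against all $A_0 \bullet T$. The only point that warrants care is the bookkeeping ensuring that the maps $f$ and $f^\dagger$ of the two definitions literally coincide once $\mathcal{A}=\C$, and the trivial observation that $A_0$ lies in its own edge space $\mathcal{S}_0$, which is immediate from Schur idempotence.
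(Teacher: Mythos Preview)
Your proof is correct and follows essentially the same approach as the paper: invoke Proposition \ref{prop:t-homch}(2) with $\mathcal{A}=\C$ and specialize to $T=A_0$, using Schur idempotence $A_0\bullet A_0=A_0$ to obtain the adjacency condition of Definition \ref{dfn:adjhom}. The paper's proof is the same one-line argument, just stated more tersely.
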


\begin{proof}
Since $A_0 \in \mathcal{S}_0$ and $\C$ is the tensor unit, Proposition \ref{prop:t-homch} (2) with $T=A_0$ shows
$A_1 \bullet (f^\dagger A_0  f )
=f^\dagger A_0  f$.
\end{proof}

The following theorem gives a sufficient condition to make the two notions of homomorphisms coincide.

\begin{thm}\label{thm:hom-lochomequiv}
Let $\mathcal{G}_j = (B_j,\psi_j,A_j,\mathcal{S}_j)$ for $j=0,1$ 
be real quantum graphs with $\delta_j$-forms $\psi_j$ and quantum relations 
$\mathcal{S}_j=\ran (A_j\bullet \cdot)\subset B(L^2(\mathcal{G}_j))$.
Suppose that $f:B_1 \to B_0$ is modular invariant $\sigma_i \circ f=f=f \circ \sigma_i$ and $\mathcal{G}_1$ is Schur central.
Then $f^\op:\mathcal{G}_0 \to \mathcal{G}_1$ is a graph homomorphism
if and only if $(f,\C):\mathcal{G}_0 \overset{}{\to} \mathcal{G}_1$ is a $loc$-homomorphism.
\end{thm}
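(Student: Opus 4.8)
The two implications are of very different difficulty. The direction ``$loc$-homomorphism $\Rightarrow$ graph homomorphism'' is already Proposition \ref{prop:lochom=>hom} (apply Proposition \ref{prop:t-homch}(2) with $T=A_0$ and use $A_0\bullet A_0=A_0$), so the whole content is the converse. To prove it I would pass to the $*$-algebra isomorphism $B(L^2(\mathcal{G}_j))\cong B_j^\op\otimes B_j$ under which the Schur product becomes ordinary multiplication and a real Schur-idempotent operator becomes a projection; write $p_{A_j}$ for the projection attached to $A_j$. The hypothesis that $\mathcal{G}_1$ is Schur central, i.e.\ $A_1\bullet\cdot=\cdot\bullet A_1$, says precisely that $p_{A_1}$ is \emph{central} in $B_1^\op\otimes B_1$; hence $q:=1-p_{A_1}$ is a central projection, $\mathcal{S}_1=p_{A_1}(B_1^\op\otimes B_1)$, and $\mathcal{S}_1^\perp$ is the complementary two-sided ideal $q\,(B_1^\op\otimes B_1)$.

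Then I would study the map $\Phi\colon T\mapsto f^\dagger T f$ from $B(L^2(\mathcal{G}_0))$ to $B(L^2(\mathcal{G}_1))$ and, via the above isomorphism, the induced map $\tilde{\Phi}\colon B_0^\op\otimes B_0\to B_1^\op\otimes B_1$, $p_T\mapsto p_{f^\dagger T f}$. The crucial claim is that $\tilde{\Phi}$ is completely positive. Morally $\tilde{\Phi}$ is the adjoint of the unital $*$-homomorphism $f^\op\otimes f\colon B_1^\op\otimes B_1\to B_0^\op\otimes B_0$, and the adjoint of a $*$-homomorphism with respect to faithful states is completely positive; the place where the hypotheses enter is that taking the GNS adjoint normally inserts modular automorphisms $\sigma_{\pm i/2}$ (as in the discussion of $T^\dagger$ versus $T^\ddagger$ in Section 1), and the modular invariance $\sigma_i\circ f=f=f\circ\sigma_i$ is exactly what lets one slide these through $f$ and cancel them, so that $\tilde{\Phi}$ is the honest CP adjoint. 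I expect this to be the main obstacle, and I would discharge it by a string-diagram computation, using the realness identity \eqref{realness} together with modular invariance to produce a symmetric, hence manifestly positive, diagram for $\tilde{\Phi}$.

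Granting complete positivity, the rest is short. The graph-homomorphism hypothesis $A_1\bullet(f^\dagger A_0 f)=f^\dagger A_0 f$ reads $p_{A_1}\,\tilde{\Phi}(p_{A_0})=\tilde{\Phi}(p_{A_0})$, i.e.\ $q\,\tilde{\Phi}(p_{A_0})=0$. Consider the completely positive (in particular $2$-positive) compression $\Phi_q:=q\,\tilde{\Phi}(\cdot)=q\,\tilde{\Phi}(\cdot)\,q$ by the central projection $q$; by the above $\Phi_q(p_{A_0})=0$. Since $p_{A_0}$ is a projection, for every $X\in B_0^\op\otimes B_0$ the operator matrix $\left(\begin{smallmatrix}p_{A_0}&p_{A_0}X\\ X^*p_{A_0}&X^*X\end{smallmatrix}\right)=\left(\begin{smallmatrix}1\\ X^*\end{smallmatrix}\right)p_{A_0}\left(\begin{smallmatrix}1&X\end{smallmatrix}\right)+\left(\begin{smallmatrix}0&0\\0&X^*(1-p_{A_0})X\end{smallmatrix}\right)$ is positive, so $2$-positivity of $\Phi_q$ gives $\left(\begin{smallmatrix}\Phi_q(p_{A_0})&\Phi_q(p_{A_0}X)\\ \Phi_q(X^*p_{A_0})&\Phi_q(X^*X)\end{smallmatrix}\right)\ge 0$ with vanishing $(1,1)$-entry, which forces the off-diagonal entry $\Phi_q(p_{A_0}X)=0$.

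Finally I would translate this back. For arbitrary $T\in B(L^2(\mathcal{G}_0))$ put $X=p_T$ and use $p_{A_0}\,p_T=p_{A_0\bullet T}$ to obtain $q\,\tilde{\Phi}(p_{A_0\bullet T})=0$, that is, $A_1\bullet(f^\dagger(A_0\bullet T)f)=f^\dagger(A_0\bullet T)f$ for every $T$. This is exactly condition (2) of Proposition \ref{prop:t-homch} with $\mathcal{A}=\C$, which is equivalent to the inclusion \eqref{t-hom}; hence $(f,\C)\colon\mathcal{G}_0\to\mathcal{G}_1$ is a $loc$-homomorphism, completing the nontrivial direction. The only genuinely hard step is the complete positivity of $\tilde{\Phi}$; everything after it is the standard ``a projection killed by a CP map lies in its multiplicative domain'' mechanism.
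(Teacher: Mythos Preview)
Your argument is correct and takes a genuinely different route from the paper's. Both proofs hinge on the same two ingredients---Schur centrality of $\mathcal{G}_1$ and a positivity statement coming from the modular invariance of $f$---but they deploy them quite differently.

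The paper uses Schur centrality via Lemma~\ref{lem:Schurcentral} to reduce the test element $S\in\mathcal{S}_1^\perp$ to a Schur projection, then performs an explicit string-diagram expansion of $\braket{S\mid f^\dagger A_0 f}$; modular invariance of $f$ is used to straighten loops, and the resulting diagram has a vertical reflection symmetry that exhibits it as a sum of nonnegative terms, one of which is $|\braket{S\mid f^\dagger T f}|^2$. You instead use Schur centrality to say $q=1-p_{A_1}$ is a \emph{central} projection, so the compression $\Phi_q=q\,\tilde\Phi(\cdot)\,q$ is still CP, and then run the standard Kadison--Schwarz / multiplicative-domain mechanism: $\Phi_q(p_{A_0})=0$ with $p_{A_0}$ a projection forces $\Phi_q(p_{A_0}X)=0$. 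This is cleaner and makes the structure of the argument transparent; the paper's diagram manipulation is really doing the same ``a projection killed by a positive map is in its kernel ideal'' step by hand.

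The only place you should be more careful is the complete positivity of $\tilde\Phi$. Your phrase ``the adjoint of a $*$-homomorphism with respect to faithful states is completely positive'' is false as stated (e.g.\ take $f=\id_{M_2}$ with two different faithful states; then $f^\dagger$ is left multiplication by $Q_1^{-1}Q_0$, which is not even positive). What is true is that the \emph{trace}-dual of a CP map is CP, and your modular-invariance hypothesis is exactly what collapses the GNS picture to the tracial one: $\sigma_i^{B_0}\circ f=f$ says $f(B_1)$ commutes with $Q_0$, hence $\sigma_z^{B_0}\circ f=f$ for all $z$, and dually for the other condition. With that, one checks (e.g.\ on rank-one operators, using realness of $f$ and $f^\dagger$) that under the identification $T\mapsto p_T$ your map $\tilde\Phi$ is precisely $(f^\dagger)^{\op}\otimes f^\dagger$, which is CP since $f^\dagger$ is the trace-dual of the $*$-homomorphism $f$. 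In the tracial case (Theorem~\ref{thm:hom-lochomequiv_tracial}) this is immediate; in general it is the honest content of the paper's ``vertical symmetry'' observation, and you should expect the string-diagram verification you allude to to have roughly the same length as the paper's computation.
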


\begin{proof}
Proposition \ref{prop:lochom=>hom} shows that a local homomorphism is a graph homomorphism. It suffices to show the converse, i.e., 
\begin{align}
\braket{S|f^\dagger Tf}_{\Psi_1}=\delta_1^{2} \bra{1_{B_1}} S^* \bullet (f^\dagger T f ) \ket{1_{B_1}}_{\psi_1}
= 
\begin{sd}
	\path (0,0) node[circle,draw] (S) {$S^*$} 
		(\sep*3/4,0) node[draw] (T) {$f^\dagger T f$};
	\draw (S.north) to[out=90,in=90](T.north) 
		(S.south) to[out=-90,in=-90](T.south) ;
\end{sd}
=0	\label{edgepres}
\end{align}
holds for any $T \in S_0$ and $S \in S_1^\perp$ from the assumption that \eqref{edgepres} holds for $T=A_0$. 

Take $T$ as a normal vector $\braket{T|T}_{\Psi_0}=1$ in $S_0$.

By the following Lemma \ref{lem:Schurcentral}, we may assume that $S \in S_1^\perp$ is a Schur projection because $\mathcal{G}_1$ is Schur central and so is its complement $(B_1, \psi_1, J-A_1, S_1^\perp=\ran (J-A_1))$.

\begin{lem}\label{lem:Schurcentral}
Let $\mathcal{G} = (B,\psi,A,\mathcal{S})$ be a real quantum graph.
Then $\mathcal{G}$ is Schur central if and only if $\mathcal{S}$ is generated by Schur projections.
\end{lem}

And the modular invariance of $f$ enables us to eliminate loops in diagrams as follows:
\begin{align}
\begin{sd}
	\path (0,0) node[circle,draw] (f) {$f^\dagger$};
	\draw (f.south) to[out=-90,in=180]++(-\sep/6,-\loopdiam) 
		to[out=0,in=-90]++(-\sep/6,\loopdiam) --++(0,\sep/2)
		(f.north) --++(0,\sep/4)  ;
\end{sd}
\overset{(f:\text{ real})}{=}
\begin{sd}
	\path (0,0) node[circle,draw] (f) {$f$};
	\draw (f.south) to[out=-90,in=0]++(\sep/6,-\loopdiam) 
		to[out=180,in=-90]++(\sep/6,\loopdiam) --++(0,\sep/2)
		(f.north) --++(0,\sep/4)  ;
\end{sd}
\overset{(f\sigma_{i}=f)}{=}
\begin{sd}
	\path (0,0) node[circle,draw] (f) {$f$};
	\draw (f.north) --++(0,\sep/4)  
		(f.south) to[out=-90,in=-90] ++(\sep/3,0)--++(0,\sep/2)  ;
\end{sd}	;
\\
\begin{sd}
	\path (0,0) node[circle,draw] (f) {$f$};
	\draw (f.north) to[out=90,in=90]++(\loopdiam,\sep/4) 
		to[out=-90,in=-90]++(-\loopdiam,\sep/4)
		(f.south) to[out=-90,in=-90] ++(-\sep/3,0)--++(0,\sep/2)  ;
\end{sd}
\overset{(\sigma_{-i}f=f)}{=}
\begin{sd}
	\path (0,0) node[circle,draw] (f) {$f$};
	\draw (f.north) --++(0,\sep/4)  
		(f.south) to[out=-90,in=-90] ++(-\sep/3,0)--++(0,\sep/2)  ;
\end{sd}
\overset{(f:\text{ real})}{=}
\begin{sd}
	\path (0,0) node[circle,draw] (f) {$f^\dagger$};
	\draw (f.north) --++(0,\sep/4)  
		(f.south) to[out=-90,in=-90] ++(\sep/3,0)--++(0,\sep/2)  ;
\end{sd}. 	\label{modinv:f}
\end{align}
Now we have 
\begin{align}
0 &= \begin{sd}
	\path (0,0) node[circle,draw] (S) {$S^*$} 
		(\sep*3/4,0) node[draw] (T) {$f^\dagger A_0 f$};
	\draw (S.north) to[out=90,in=90](T.north) 
		(S.south) to[out=-90,in=-90](T.south) ;
\end{sd}
 \overset{(S^*=S=S\bullet S)}{=}
 \delta_1^{-2}
\begin{sd}
	\path (0,0) node[circle,draw] (S1) {$S$} 
		(-\sep*0.6,0) node[circle,draw] (S2) {$S^*$} 
		(\sep*3/4,0) node[draw] (T) {$f^\dagger A_0 f$};
	\draw (S1.north) to[out=90,in=90]coordinate[midway] (m) (S2.north) 
		(S1.south) to[out=-90,in=-90]coordinate[midway] (m*) (S2.south)
		(m) to[out=90,in=90](T.north) 
		(m*) to[out=-90,in=-90](T.south) ;
\end{sd}
 =  \delta_1^{-2}
\begin{sd}
	\path (0,0) node[circle,draw] (S1) {$S$} 
		(-\sep*3/5,0) node[circle,draw] (S2) {$S^*$} 
		(\sep*3/4,0) node[draw] (T) {$f^\dagger A_0 f$}
		(T)++(\sep*0.3,\sep*0.6) coordinate (B2a) 
		(T)++(-\sep*0.3,\sep*0.6) coordinate (B1a) 
		(T)++(\sep*0.3,-\sep*0.6) coordinate (B2b) 
		(T)++(-\sep*0.3,-\sep*0.6) coordinate (B1b);
	\draw (B1b) to[out=90,in=90]coordinate[midway] (m) (B2b) 
		(B1a) to[out=-90,in=-90]coordinate[midway] (m*) (B2a)
		(m) to[out=90,in=-90](T.south) 
		(m*) to[out=-90,in=90](T.north) 
		(B1a) to[out=90,in=90](S1.north) 
		(B1b) to[out=-90,in=-90](S1.south) 
		(B2a) to[out=90,in=90](S2.north) 
		(B2b) to[out=-90,in=-90](S2.south) ;
\end{sd}
\\
& \overset{(f: \text{hom})}{=} \delta_1^{-2}
\begin{sd}
	\path (0,0) node[circle,draw] (S1) {$S$} 
		(-\sep*3/5,0) node[circle,draw] (S2) {$S^*$} 
		(\sep*3/4,0) node[circle,draw] (T) {$A_0$}
		(T)++(\sep*0.3,\sep*3/4) node[circle,draw] (B2a) {$f^\dagger$}
		(T)++(-\sep*0.3,\sep*3/4) node[circle,draw] (B1a) {$f^\dagger$} 
		(T)++(\sep*0.3,-\sep*3/4) node[circle,draw] (B2b) {$f$} 
		(T)++(-\sep*0.3,-\sep*3/4) node[circle,draw] (B1b) {$f$};
	\draw (B1b) to[out=90,in=90]coordinate[midway] (m) (B2b) 
		(B1a) to[out=-90,in=-90]coordinate[midway] (m*) (B2a)
		(m) to[out=90,in=-90](T.south) 
		(m*) to[out=-90,in=90](T.north) 
		(B1a) to[out=110,in=90](S1.north) 
		(B1b) to[out=-110,in=-90](S1.south) 
		(B2a) to[out=90,in=90](S2.north) 
		(B2b) to[out=-90,in=-90](S2.south) ;
\end{sd}
= \delta_1^{-2}
\begin{sd}
	\path (-\sep*3/4,0) node[circle,draw] (S1) {$S$} 
		(\sep*3/4,0) node[circle,draw] (S2) {$S^*$} 
		(0,0) node[circle,draw] (T) {$A_0$}
		(T)++(\sep*0.3,\sep*3/4) node[circle,draw] (B2a) {$f^\dagger$}
		(T)++(-\sep*0.3,\sep*3/4) node[circle,draw] (B1a) {$f^\dagger$} 
		(T)++(\sep*0.3,-\sep*3/4) node[circle,draw] (B2b) {$f$} 
		(T)++(-\sep*0.3,-\sep*3/4) node[circle,draw] (B1b) {$f$};
	\draw (B1b) to[out=90,in=90]coordinate[midway] (m) (B2b) 
		(B1a) to[out=-90,in=-90]coordinate[midway] (m*) (B2a)
		(m) to[out=90,in=-90](T.south) 
		(m*) to[out=-90,in=90](T.north) 
		(B1a) to[out=110,in=90](S1.north) 
		(B1b) to[out=-110,in=-90](S1.south) 
		(B2a) to[out=50,in=0] ++(\sep*0.4,\sep/2)
		to[out=180,in=90](S2.north) 
		(B2b) to[out=-50,in=0] ++(\sep*0.4,-\sep/2)
		to[out=180,in=-90](S2.south) ;
\end{sd}
\overset{(f: \text{real})}{=} \delta_1^{-2}
\begin{sd}
	\path (-\sep*3/4,0) node[draw] (S1) {$fSf^\dagger$} 
		(\sep*3/4,0) node[draw] (S2) {$fS^*f^\dagger$} 
		(0,0) node[circle,draw] (T) {$A_0$}
		(T)++(\sep*0.3,\sep*1/2) coordinate (B2a) 
		(T)++(-\sep*0.3,\sep*1/2) coordinate (B1a) 
		(T)++(\sep*0.3,-\sep*1/2) coordinate (B2b) 
		(T)++(-\sep*0.3,-\sep*1/2) coordinate (B1b) ;
	\draw (B1b) to[out=90,in=90]coordinate[midway] (m) (B2b) 
		(B1a) to[out=-90,in=-90]coordinate[midway] (m*) (B2a)
		(m) to[out=90,in=-90](T.south) 
		(m*) to[out=-90,in=90](T.north) 
		(B1a) to[out=110,in=90](S1.north) 
		(B1b) to[out=-110,in=-90](S1.south) 
		(B2a) to[out=90,in=0] ++(\sep/4,\sep/2)
		to[out=180,in=90](S2.north) 
		(B2b) to[out=-90,in=0] ++(\sep/4,-\sep/2)
		to[out=180,in=-90](S2.south) ;
\end{sd}
\\
& = \delta_1^{-2}
\begin{sd}
	\path (0,0) node[circle,draw] (T) {$A_0$}
		(T)++(\sep*0.4,\sep*0.5) coordinate (B2a) 
		(T)++(-\sep*0.5,\sep*0.1) coordinate (B1a) 
		(T)++(\sep*0.5,-\sep*0.1) coordinate (B2b) 
		(T)++(-\sep*0.4,-\sep*0.5) coordinate (B1b) 
		(B1a)++ (0,\sep*2/3) node[draw] (S1) {$fSf^\dagger$} 
		(B2b)++ (0,-\sep*2/3) node[draw] (S2) {$fS^* f^{\dagger}$} ;
	\draw (B2b)%--++(0,\sep*2/3)
		 to[out=90,in=90]coordinate[midway] (m) (T.north) 
		(B1a)%--++(0,-\sep*2/3) 
		 to[out=-90,in=-90]coordinate[midway] (m*) (T.south)
		(m) to[out=90,in=-90](B2a) 
		(m*) to[out=-90,in=90](B1b) 
		(B1a) to[out=90,in=-90](S1.south) 
		(B2a) to[out=90,in=90](S1.north) 
		(B1b) to[out=-90,in=-90] ++(-\loopdiam,-\sep/4)
		 to[out=90,in=90] ++(\loopdiam,-\sep/4) 
		 to[out=-90,in=-90](S2.south) 
		(B2b) to[out=-90,in=-90] ($(B2b)!0.5!(S2.north)+(\loopdiam,0)$) 
		 to[out=90,in=90](S2.north) ;
\end{sd}
 \overset{\eqref{modinv:f}}{=} \delta_1^{-2}
\begin{sd}
	\path (0,0) node[circle,draw] (T) {$A_0$}
		(T)++(\sep*0.4,\sep*0.5) coordinate (B2a) 
		(T)++(-\sep*0.5,\sep*0.1) coordinate (B1a) 
		(T)++(\sep*0.5,-\sep*0.1) coordinate (B2b) 
		(T)++(-\sep*0.4,-\sep*0.5) coordinate (B1b) 
		(B1a)++ (0,\sep*2/3) node[draw] (S1) {$fSf^\dagger$} 
		(B1b) node[draw,below] (S2) {$fS^\dagger f^{\dagger}$} ;
	\draw (B2b)%--++(0,\sep*2/3)
		 to[out=90,in=90]coordinate[midway] (m) (T.north) 
		(B1a)%--++(0,-\sep*2/3) 
		 to[out=-90,in=-90]coordinate[midway] (m*) (T.south)
		(m) to[out=90,in=-90](B2a) 
		(m*) to[out=-90,in=90](B1b) 
		(B1a) to[out=90,in=-90](S1.south) 
		(B2a) to[out=90,in=90](S1.north) 
		(B2b) to[out=-90,in=-45](S2) ;
\end{sd}
\overset{\eqref{graph-rel}}{=} \delta_0^2 \delta_1^{-2}
\begin{sd}
	\path (0,0) node[block,draw] (T) {$P_{A_0}$}
		(T)++(\sep*0.4,\sep*0.25) coordinate (B2a) 
		(T)++(-\sep*0.4,\sep*0.25) coordinate (B1a) 
		(T)++(\sep*0.4,-\sep*0.25) coordinate (B2b) 
		(T)++(-\sep*0.4,-\sep*0.25) coordinate (B1b) 
		(B1a)++ (0,\sep*0.4) node[draw] (S1) {$fSf^\dagger$} 
		(B1b)++ (0,-\sep*0.4) node[draw] (S2) {$fS^\dagger f^{\dagger}$} ;
	\draw (B1a) --(S1) (B1b) --(S2) 
		(B2a) to[out=90,in=40](S1) 
		(B2b) to[out=-90,in=-40](S2) ;
\end{sd},
\end{align}
where the non-indicated equalities are continuous deformations.

Recall \eqref{graph-rel} that 
$P_{A_0}=\delta_0^{-2} \begin{sd}
	\path (0,0) node[circle,draw] (A) {$A_0$};
	\draw (A.north)
		 to[out=90,in=90]coordinate[midway] (m) 
		([xshift=\sep/3]A.north) --++(0,-\sep/2)
		(A.south) to[out=-90,in=-90]coordinate[midway] (m*) 
		([xshift=-\sep/3]A.south) --++(0,\sep/2)
		(m) --++ (0,\sep/8)
		(m*) --++(0,-\sep/8);
\end{sd}$ is a projection onto $\iota(\mathcal{S}_0)\subset B_0 \otimes B_0$. Since $\iota(T) \in \iota(\mathcal{S}_0)$, the rank one projection 
$\ket{\iota(T)} \bra{\iota(T)}=\begin{sd}
	\path (0,\sep*0.275)node[circle,draw] (Ta) {$T$}
		(0,-\sep*0.275)node[circle,draw] (Tb) {$T^\dagger$} ;
	\draw (Ta)--++(0,\sep/3) (Ta)to[out=-150,in=-90]++(-\sep/2,\sep/3)
		 (Tb)--++(0,-\sep/3) (Tb)to[out=150,in=90]++(-\sep/2,-\sep/3);
\end{sd}$
 is smaller than or equal to $P_{A_0}$,
hence $P=P_{A_0}-\ket{\iota(T)} \bra{\iota(T)}$ is also a projection.
Therefore we obtain
\begin{align}
0=\delta_0^2 \delta_1^{-2} \left(
\begin{sd}
	\path (0,\sep*0.5)node[circle,draw] (Ta) {$T$}
		(0,-\sep*0.5)node[circle,draw] (Tb) {$T^\dagger$}
		(-\sep*0.8,\sep*0.5) node[draw] (Sa) {$fSf^\dagger$} 
		(-\sep*0.8,-\sep*0.5) node[draw] (Sb) {$fS^\dagger f^{\dagger}$} ;
	\draw (Ta) to[out=90,in=80](Sa)
		(Ta)to[out=-90,in=-80](Sa)
		(Tb)to[out=-90,in=-80](Sb) 
		(Tb)to[out=90,in=80](Sb);
\end{sd}
+
\begin{sd}
	\path (0,0) node[block,draw] (T) {$P$}
		(T)++(\sep*0.4,\sep*0.25) coordinate (B2a) 
		(T)++(-\sep*0.4,\sep*0.25) coordinate (B1a) 
		(T)++(\sep*0.4,-\sep*0.25) coordinate (B2b) 
		(T)++(-\sep*0.4,-\sep*0.25) coordinate (B1b) 
		(B1a)++ (0,\sep*0.4) node[draw] (S1) {$fSf^\dagger$} 
		(B1b)++ (0,-\sep*0.4) node[draw] (S2) {$fS^\dagger f^{\dagger}$} ;
	\draw (B1a) --(S1) (B1b) --(S2) 
		(B2a) to[out=90,in=40](S1) 
		(B2b) to[out=-90,in=-40](S2) ;
\end{sd}
\right).
\end{align}
By the vertical symmetry, each term is nonnegative, hence they must be zero. 
The first term is what we desired:
\begin{align}
0=\left \vert
\begin{sd}
	\path (0,\sep*0.5)node[circle,draw] (Ta) {$T$}
		(-\sep*0.8,\sep*0.5) node[draw] (Sa) {$fSf^\dagger$}  ;
	\draw (Ta) to[out=90,in=80](Sa)
		(Ta)to[out=-90,in=-80](Sa);
\end{sd}
\right \vert^2
 \overset{(f:\text{real})}{=}
\left \vert
\begin{sd}
	\path (0,\sep*0.5)node[draw] (Ta) {$f^\dagger T f$}
		(-\sep*0.8,\sep*0.5) node[circle,draw] (Sa) {$S$}  ;
	\draw (Ta) to[out=90,in=80](Sa)
		(Ta)to[out=-90,in=-80](Sa);
\end{sd}
\right \vert^2
\overset{(S:\text{real})}{=}
\abs{ \braket{S|f^\dagger T f} }^2
.
\end{align}

\end{proof}

\begin{thm}\label{thm:hom-lochomequiv_tracial}
Let $\mathcal{G}_j = (B_j,\psi_j,A_j,\mathcal{S}_j)$ for $j=0,1$ 
be real tracial quantum graphs such that $\mathcal{G}_1$ is Schur central.
Then $f^\op:\mathcal{G}_0 \to \mathcal{G}_1$ is a graph homomorphism
if and only if $(f,\C):\mathcal{G}_0 \overset{}{\to} \mathcal{G}_1$ is a $loc$-homomorphism.
\end{thm}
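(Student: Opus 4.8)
The plan is to obtain this statement as an immediate corollary of Theorem \ref{thm:hom-lochomequiv}. That theorem already establishes the equivalence under the two hypotheses that $\mathcal{G}_1$ be Schur central and that $f$ be modular invariant, $\sigma_i \circ f = f = f \circ \sigma_i$. The first hypothesis is assumed here verbatim, so the only thing I need to verify is that, in the tracial setting, the modular invariance of $f$ is automatic. The key observation is that traciality forces the modular automorphisms themselves to be trivial.

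First I would recall that $\sigma_z : B_j \to B_j$ is given by $\sigma_z(x) = Q_j^{iz} x Q_j^{-iz}$, where $Q_j$ is the density of $\psi_j = \Tr(Q_j\,\cdot)$. When $\psi_j = \tau_{B_j}$ is the canonical tracial $\delta$-form, writing $B_j = \bigoplus_s M_{n_s}$ gives $Q_j = \bigoplus_s (n_s/\dim B_j)\, I_{n_s}$, which is a scalar multiple of the identity on each matrix block and hence central in $B_j$. Consequently $Q_j^{iz}$ commutes with every element of $B_j$, so $\sigma_z = \id_{B_j}$ for all $z \in \C$; in particular $\sigma_i = \id$ on both $B_0$ and $B_1$. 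The modular invariance condition $\sigma_i \circ f = f = f \circ \sigma_i$ then reduces to the tautology $f = f = f$ and holds for any unital $*$-homomorphism $f : B_1 \to B_0$.

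Since $\mathcal{G}_0$ and $\mathcal{G}_1$ are real quantum graphs with $\delta_j$-forms, and $\mathcal{G}_1$ is Schur central by hypothesis, all assumptions of Theorem \ref{thm:hom-lochomequiv} are met. Applying that theorem yields the desired equivalence between $f^\op$ being a graph homomorphism and $(f,\C)$ being a $loc$-homomorphism. The entire weight of the argument is thus borne by Theorem \ref{thm:hom-lochomequiv}, so there is no genuine obstacle at this stage beyond recording the triviality of the modular group under traciality. The only point worth stating explicitly is that both occurrences of $\sigma_i$ — the one acting on the domain $B_1$ and the one acting on the codomain $B_0$ — vanish independently, so that no compatibility between the two traces is required.
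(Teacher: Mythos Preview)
Your proposal is correct and follows exactly the same approach as the paper: both observe that traciality makes the density $Q_j$ central, hence the modular automorphism $\sigma_i$ is the identity on each $B_j$, so the modular invariance hypothesis of Theorem \ref{thm:hom-lochomequiv} is automatic and that theorem applies directly. Your write-up is in fact slightly more detailed than the paper's, which compresses the argument into three sentences.
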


\begin{proof}
Since each $\psi_j=\Tr(Q_j \cdot)$ is tracial, the density $Q_j$ is central and its modular automorphism is 
$\sigma_i=Q_j^{-1} (\cdot) Q_j=\id_{B_j}$. Thus we have 
$\sigma_i \circ f=f=f \circ \sigma_i$. 
Therefore the statement follows from Theorem \ref{thm:hom-lochomequiv}.
\end{proof}

\begin{proof}[{Proof of Lemma \ref{lem:Schurcentral}}]
Since the statement depends only on the Schur product structure, it suffices to show for a von Neumann algebra $\mathcal{M}(=B^\op \otimes B)$ and a projection $p \in \mathcal{M}$ that $p$ is central if and only if $p\mathcal{M}$ is linearly generated by projections in weak operator topology (WOT). 

Suppose $p$ is central, then $p\mathcal{M}=p\mathcal{M}p$ is a WOT-closed subalgebra of $\mathcal{M}$. 
Then we can decompose $x \in p\mathcal{M}p$ into real and imaginary parts,
 which have spectral projections in $p\mathcal{M}p$. Since $x$ lies in the WOT-closed linear span of such spectral projections, we are done.

Suppose that $p\mathcal{M}$ is generated by projections. 
It follows for any projection $q\in p\mathcal{M}$ that 
$pq=q=q^*=qp$. 
Since such projections $q$ span $p\mathcal{M}$ in WOT, we have $px=pxp$ for any $x \in \mathcal{M}$, and $px^*=px^*p$ as well. Thus we get $px=xp$, i.e., $p$ is central.
\end{proof}

\subsection{$t$-$2$ colorability compared with bipartiteness}

\begin{dfn}[{\cite{BGH2022quantum}}]
Let $t \in \{loc, q, qa, qc, C^*, alg\}$ and $c \in \Z_{>0}$.
A quantum graph $\mathcal{G}$ is $t$-$c$ colorable if 
there exists a $t$-homomorphism $\mathcal{G} \to K_c$, 
which is called a $t$-$c$ coloring of $\mathcal{G}$.
The $t$-chromatic number of $\mathcal{G}$ is defined by 
$\chi_t(\mathcal{G})=\inf\{c \in \Z_{>0} \vert \mathcal{G}: t \hyphen c\textrm{ colorable}\}$.
\end{dfn}

Note that a $t$-$c$ coloring need not be a surjective $t$-homomorphism. Surjectivity means that it uses all the $c$ colors.

\begin{rmk}
By the obvious inclusion of the classes of algebras, $c$-colorability has the following implication:
$loc \Rightarrow q \Rightarrow qa \Rightarrow qc \Rightarrow C^* \Rightarrow alg,$ 
and hence the chromatic numbers satisfy
\[
\chi_{loc} \geq \chi_q \geq  \chi_{qa} \geq \chi_{qc} \geq \chi_{C^*} \geq \chi_{alg}.
\]
\end{rmk}

\begin{prop} \label{prop:2col=>sym}
Let $\mathcal{G}=(B,\psi,A)$ be an $alg$-$2$ colorable real quantum graph.
Then $\mathcal{G}$ has a symmetric spectrum $\Spec A=-\Spec A$. Moreover, if it is $q$-$2$ colorable, then the symmetry of the spectrum holds with its multiplicity.
\end{prop}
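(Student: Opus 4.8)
The plan is to realize an $alg$-$2$ coloring as a grading of $L^2(\mathcal{G})$ implemented by a self-adjoint unitary over the coefficient algebra, and to read off the spectral symmetry from the fact that this grading conjugates $A$ to $-A$. First I would unpack the definition: an $alg$-$2$ coloring is a unital $*$-homomorphism $f:\C^2\to B\otimes\mathcal{A}$ for some nonzero unital $*$-algebra $\mathcal{A}$, and by the remark following Proposition \ref{prop:t-homch} (applied to $K_2=(\C^2,\tau,A_1)$, whose only vanishing adjacency entries $\braket{e_i|A_1|e_j}=0$ are the two diagonal ones) it is encoded by the complementary projections $P_i=f(e_i)\in B\otimes\mathcal{A}$ with $P_1+P_2=1_B\otimes1_\mathcal{A}$ and $P_1P_2=0$, subject to the monochromatic-edge condition $P_i\,(\mathcal{S}\otimes\mathcal{A})\,P_i=0$ for $i=1,2$, where $\mathcal{S}=\ran(A\bullet\cdot)$ and the $P_i$ act on $B(L^2(\mathcal{G}))\otimes\mathcal{A}$ by left multiplication $\lambda$. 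Since $A\bullet A=A$ forces $A\in\mathcal{S}$, specializing to $A$ gives $\lambda(P_i)\,(A\otimes1_\mathcal{A})\,\lambda(P_i)=0$ for $i=1,2$.

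Next I would set $g=P_1-P_2\in B\otimes\mathcal{A}$, a self-adjoint unitary ($g^*=g$, $g^2=1$). Because $\lambda(P_1)+\lambda(P_2)=\lambda(f(1))=\id$, expanding $A\otimes1_\mathcal{A}=\sum_{i,j}\lambda(P_i)(A\otimes1_\mathcal{A})\lambda(P_j)$ and discarding the two vanishing diagonal terms yields $A\otimes1_\mathcal{A}=\lambda(P_1)(A\otimes1_\mathcal{A})\lambda(P_2)+\lambda(P_2)(A\otimes1_\mathcal{A})\lambda(P_1)$. Conjugating by $\lambda(g)$ then produces the key identity $\lambda(g)\,(A\otimes1_\mathcal{A})\,\lambda(g)=-(A\otimes1_\mathcal{A})$ in $B(L^2(\mathcal{G}))\otimes\mathcal{A}$, the quantum analogue of the classical statement that the $\pm1$ bipartition vector anticommutes with the adjacency matrix (one checks this reproduces $\lambda(g)A\lambda(g)=-A$ in the classical diagonal case).

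For the set-level claim I would avoid representing $\mathcal{A}$ on a Hilbert space, which need not be possible for a bare $*$-algebra, and instead argue through invertibility in the algebraic tensor product $\End(L^2(\mathcal{G}))\otimes\mathcal{A}$. For $\mu\in\C$, since $\dim B<\infty$ and $1_\mathcal{A}\neq0$, the element $(\mu\,\id-A)\otimes1_\mathcal{A}$ is invertible there if and only if $\mu\notin\Spec(A)$: if $\mu\,\id-A$ is invertible its inverse tensored with $1_\mathcal{A}$ works, while a kernel vector $v$ of $\mu\,\id-A$ yields the nonzero kernel vector $v\otimes1_\mathcal{A}$, obstructing invertibility. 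Conjugation by the invertible $\lambda(g)$ sends $(\mu\,\id-A)\otimes1_\mathcal{A}$ to $(\mu\,\id+A)\otimes1_\mathcal{A}$ and preserves invertibility, so $\mu\notin\Spec(A)\iff-\mu\notin\Spec(A)$, that is $\Spec(A)=-\Spec(A)$.

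Finally, under $q$-$2$ colorability $\mathcal{A}$ is finite-dimensional, hence admits a faithful $*$-representation $\pi$ on a finite-dimensional Hilbert space $H$; applying $\id\otimes\pi$ to the key identity turns $\lambda(g)$ into a genuine self-adjoint unitary $U$ on $L^2(\mathcal{G})\otimes H$ with $U(A\otimes1_H)U^{-1}=-(A\otimes1_H)$. Similar operators on a finite-dimensional space share eigenvalue multiplicities, and the multiplicity of $\mu$ in $A\otimes1_H$ equals $(\dim H)$ times its multiplicity in $A$; dividing out $\dim H$ gives equality of the multiplicities of $\mu$ and $-\mu$ in $A$. I expect the main obstacle to be precisely the $alg$ case: because $\mathcal{A}$ carries no Hilbert-space structure one cannot invoke unitary (or even similarity) equivalence, so the care lies in phrasing everything as invertibility of $(\mu\,\id-A)\otimes1_\mathcal{A}$ over the possibly wild coefficient algebra and verifying that this invertibility still detects $\Spec(A)$.
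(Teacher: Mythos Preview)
Your proof is correct and follows essentially the same route as the paper: both arguments extract from the $2$-coloring the projections $P_1,P_2$ with $P_i(A\otimes 1_\mathcal{A})P_i=0$, form $g=P_1-P_2$, and derive the anticommutation $\lambda(g)(A\otimes 1_\mathcal{A})=-(A\otimes 1_\mathcal{A})\lambda(g)$; the paper then pushes eigenvectors through $P_1-P_2$ while you recast the same intertwining as invertibility of $(\mu\,\id\mp A)\otimes 1_\mathcal{A}$, which is an equivalent and equally short way to read off $\Spec A=-\Spec A$, and the $q$-case is handled identically via a finite-dimensional representation. The paper additionally pauses to check that $f$ is injective when $A\neq 0$ (using Lemma~\ref{lem:numofedges}), but this is not actually needed for the spectral conclusion since $(P_1-P_2)^2=1$ holds regardless, so your omission is harmless.
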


\begin{proof}
If $A=0$, the statement is trivial. So we may assume $A \neq 0$.
Let $(f,\mathcal{A}) : \mathcal{G} \to K_2
=(\C^2, \tau, A_{K_2},\mathcal{S}_{K_2})$
 be an $alg$-homomorphism. 
In this case $(f,\mathcal{A})$ is automatically surjective, i.e., 
$f:\C^2 \to B\otimes \mathcal{A}$ is injective. 
Indeed if $f$ is not injective, then we may assume $f(e_1)=1_B \otimes 1_\mathcal{A}$ and $f(e_2)=0$ without loss of generality. 
But this implies for $e_1 e_1^\dagger \in \mathcal{S}_{K_2}^\perp$ that
$\braket{e_1 e_1^\dagger | f^\dagger (A \otimes 1_\mathcal{A}) f}_{\Tr}
=\braket{e_1| f^\dagger (A \otimes 1_\mathcal{A}) f|e_1}_{\tau}
=\braket{1| A |1}_{\psi} 1_\mathcal{A} \neq 0$ 
by Lemma \ref{lem:numofedges}, 
which contradicts that $(f,\mathcal{A})$ is an $alg$-homomorphism
(Proposition \ref{prop:t-homch}~(3)).

Now we have nonzero projections 
$P_j=\lambda(f(e_j)) \in B(L^2(\mathcal{G})) \otimes \mathcal{A}$, 
where $\lambda$ denotes the left multiplication, satisfying
$P_j(A\otimes 1_\mathcal{A})P_j=0$ for each $j=1,2$.
Then we have
\begin{align}
A\otimes 1_\mathcal{A}
=P_1(A\otimes 1_\mathcal{A})P_2 + P_2(A\otimes 1_\mathcal{A})P_1.
\end{align}
and hence
\begin{align}
(A\otimes 1_\mathcal{A})(P_1-P_2)
&= P_2(A\otimes 1_\mathcal{A})P_1 - P_1(A\otimes 1_\mathcal{A})P_2
\\
&= (P_2-P_1)(A\otimes 1_\mathcal{A}),
\\
((\alpha \,\id+A)\otimes 1_\mathcal{A})(P_1-P_2)
&=(P_1-P_2)((\alpha \,\id - A)\otimes 1_\mathcal{A})
\end{align}
It follows for $\alpha \in \Spec A$ and $v \in \ker(\alpha \,\id_B -A)$ that
$(P_1-P_2)v \in \ker(\alpha \,\id_B +A) \otimes \mathcal{A}$.
Indeed $v$ satisfies
\begin{align}
((\alpha \,\id+A)\otimes 1_\mathcal{A})(P_1-P_2)v
&= (P_1-P_2)((\alpha \,\id - A)\otimes 1_\mathcal{A})v=0.
\end{align}
For a generalized eigenvector $v \in \ker(\alpha \,\id_B -A)^k$ for some positive integer $k$, we similarly have
$(P_1-P_2)v \in \ker(\alpha \,\id_B +A)^k \otimes \mathcal{A}$ by
\begin{align}
((\alpha \,\id+A)^k\otimes 1_\mathcal{A})(P_1-P_2)v
&= (P_1-P_2)((\alpha \,\id - A)^k\otimes 1_\mathcal{A})v=0.
\end{align}
Therefore $-\alpha \in \Spec A$, i.e., $\mathcal{G}$ has a symmetric spectrum.

If $(f,\mathcal{A})$ is a $q$-$2$ coloring, then $\mathcal{A}\subset M_n=B(\C^n)$ for some positive integer $n$. Thus $P_1-P_2$ restricts to linear isomorphisms between generalized eigenspaces 
$\ker(\alpha \,\id_B -A)^{\dim B} \otimes \C^n 
\cong \ker(\alpha \,\id_B +A)^{\dim B} \otimes \C^n$,
hence the multiplicities coincide as $\dim \ker(\alpha \,\id_B -A)^{\dim B}  
=\dim \ker(\alpha \,\id_B +A)^{\dim B}$.
\end{proof}

\begin{thm}\label{thm:bipartiff2col}
Let $\mathcal{G}=(B,\psi,A)$ be a real tracial quantum graph. Then $\mathcal{G}$ is bipartite if and only if it is $loc$-$2$ colorable.
\end{thm}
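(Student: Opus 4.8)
The plan is to read off the equivalence from Theorem~\ref{thm:hom-lochomequiv_tracial} once its hypotheses are checked for the pair $\mathcal{G}_0=\mathcal{G}$, $\mathcal{G}_1=K_2$. The first step is to verify that $K_2=K(\C^2,\tau_{\C^2})$ is real, tracial, and \emph{Schur central}. Realness and traciality are immediate; for Schur centrality I would note that on a classical quantum set $(\C^n,\tau)$ the formula $S\bullet T=\delta^{-2}m(S\otimes T)m^\dagger$ computes, using $m^\dagger e_k=n\,e_k\otimes e_k$ and $m(e_i\otimes e_j)=\delta_{ij}e_i$, to the entrywise (Hadamard) product $(S\bullet T)_{ik}=S_{ik}T_{ik}$ on $B(L^2)\cong M_n$. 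This product is commutative, so every classical adjacency matrix---in particular $A_{K_2}$---is central for $\bullet$. Thus Theorem~\ref{thm:hom-lochomequiv_tracial} applies and identifies graph homomorphisms $f^\op:\mathcal{G}\to K_2$ with $loc$-homomorphisms $(f,\C):\mathcal{G}\to K_2$; the entire content of the theorem then localizes in the difference between \emph{bipartite} (a surjective graph homomorphism, i.e. $f:\C^2\to B$ injective) and \emph{$loc$-$2$ colorable} (merely some $loc$-homomorphism).

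The forward implication is then immediate: a surjective graph homomorphism $f^\op:\mathcal{G}\to K_2$ is in particular a graph homomorphism, hence by the identification a $loc$-homomorphism $(f,\C):\mathcal{G}\to K_2$, which witnesses $loc$-$2$ colorability (surjectivity is not required for colorability).

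For the converse I would start from a $loc$-homomorphism $(f,\C):\mathcal{G}\to K_2$, which Theorem~\ref{thm:hom-lochomequiv_tracial} turns into a graph homomorphism $f^\op$; the task is to upgrade it to a surjective one, i.e. to prove $f$ injective. Here I would reuse verbatim the automatic-surjectivity argument from the proof of Proposition~\ref{prop:2col=>sym}: a unital $*$-homomorphism $\C^2\to B$ fails to be injective only by collapsing a minimal projection, so up to relabeling $f(e_1)=1_B$ and $f(e_2)=0$; testing the edge-preservation condition of Proposition~\ref{prop:t-homch}(3) against the self-loop $e_1 e_1^\dagger\in\mathcal{S}_{K_2}^\perp$ yields $\braket{1_B|A|1_B}_\psi$, which is strictly positive whenever $A\neq 0$ by Lemma~\ref{lem:numofedges}, contradicting that $(f,\C)$ is a $loc$-homomorphism. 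Hence for $A\neq 0$ every $loc$-$2$ coloring is injective, giving a surjective graph homomorphism to $K_2$ and thus bipartiteness.

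The main obstacle is the degenerate case $A=0$, where the edge space $\mathcal{S}_0$ is trivial, the edge-preservation condition is vacuous, and the injectivity argument breaks down. In that case bipartiteness is equivalent to the existence of a nontrivial projection in $B$, i.e. to $\dim B\geq 2$, and I would handle it by the same dimension convention already imposed in Theorems~\ref{thm:connbipartite} and~\ref{thm:bipartite} (requiring $\dim B\geq 2$ when the degree is $0$); under this convention $A=0$ is simultaneously bipartite and $loc$-$2$ colorable, the only point needing explicit mention being the isolated single vertex $B=\C$. Modulo this bookkeeping, the statement follows from the homomorphism identification together with the automatic injectivity of colorings of nonzero graphs, so I expect the Hadamard-product verification of Schur centrality of $K_2$ and the reuse of the Proposition~\ref{prop:2col=>sym} contradiction to be the two load-bearing steps.
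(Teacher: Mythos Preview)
Your approach matches the paper's: both simply invoke Theorem~\ref{thm:hom-lochomequiv_tracial} after noting that $K_2$ is real, tracial, and Schur central (the paper merely asserts this last point; your Hadamard-product verification is more than it provides).

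You are right to flag the surjectivity mismatch: bipartiteness requires a \emph{surjective} graph homomorphism (i.e., $f$ injective), whereas $loc$-$2$ colorability does not, and the paper's two-sentence proof does not address this at all. Your automatic-injectivity argument, borrowed from the proof of Proposition~\ref{prop:2col=>sym}, is the correct patch and works exactly as you describe when $A\neq 0$. You are also right that $B=\C$, $A=0$ is a genuine counterexample to the theorem as literally stated: it is $loc$-$2$ colorable via $f(e_1)=1$, $f(e_2)=0$ (the edge condition is vacuous since $\mathcal{S}_0=\{0\}$) but admits no injective $f:\C^2\to\C$, hence is not bipartite. The paper does not carve this case out here (in contrast to Theorems~\ref{thm:connbipartite} and~\ref{thm:bipartite}), so this is a small oversight in the paper rather than a defect in your reasoning.
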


\begin{proof}
By Theorem \ref{thm:hom-lochomequiv_tracial}, the existence of a graph homomorphism $\mathcal{G} \to K_2$ is equivalent to the existence of a $loc$-homomorphism $\mathcal{G} \to K_2$ because these are tracial real quantum graphs and the classical $K_2$ is Schur central.
Thus $\mathcal{G}$ is bipartite if and only if it is $loc$-$2$ colorable.
\end{proof}

\begin{cor} \label{cor:2colequiv}
Let $\mathcal{G}=(B,\psi,A)$ be a connected $d$-regular undirected tracial quantum graph.
The following are equivalent:
\begin{description}
\item[(1)] $\mathcal{G}$ is $loc$-$2$ colorable;
\item[(2)] $\mathcal{G}$ is $alg$-$2$ colorable;
\item[(3)] $\mathcal{G}$ has a symmetric spectrum;
\item[(4)] $-d \in \Spec(A)$.
If $d=0$, we require ${\dim B} \geq 2$;
\item[(5)] $\mathcal{G}$ is bipartite.
\end{description}
In this case, the symmetry of the spectrum in (3) holds with multiplicity.
\end{cor}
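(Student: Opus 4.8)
The plan is to establish that the five conditions are equivalent by closing the chain
$(1)\Rightarrow(2)\Rightarrow(3)\Rightarrow(4)\Leftrightarrow(5)\Leftrightarrow(1)$,
reading off each arrow from a result already proved. Since $\mathcal{G}$ is undirected it is in particular real, so both Theorem \ref{thm:bipartiff2col} and Proposition \ref{prop:2col=>sym} apply to it, and since it is connected and $d$-regular undirected tracial, Theorem \ref{thm:connbipartite} applies verbatim. First I would record that the two biconditionals are available off the shelf: $(4)\Leftrightarrow(5)$ is exactly Theorem \ref{thm:connbipartite}, and $(5)\Leftrightarrow(1)$ is exactly Theorem \ref{thm:bipartiff2col}. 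These already absorb the $d=0$ proviso written into (4), so the three conditions $(1),(4),(5)$ are mutually equivalent before any further work.

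It then remains to splice (2) and (3) into this block. The implication $(1)\Rightarrow(2)$ is immediate, since a $loc$-homomorphism is in particular an $alg$-homomorphism (the scalar field $\C$ is among the admissible algebras $\mathcal{A}$); that is, $loc$-$2$ colorability implies $alg$-$2$ colorability. The implication $(2)\Rightarrow(3)$ is precisely Proposition \ref{prop:2col=>sym}. For the closing arrow $(3)\Rightarrow(4)$ I would use that the degree is always an eigenvalue: regularity gives $A1_B=d1_B$ with $1_B\neq 0$, hence $d\in\Spec(A)$, so a symmetric spectrum $\Spec(A)=-\Spec(A)$ forces $-d\in\Spec(A)$; and when $\dim B\geq 2$ connectedness yields $d>0$, which makes the $d=0$ clause of (4) vacuous. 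This closes the cycle and shows (1)--(5) are mutually equivalent.

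For the multiplicity refinement I would upgrade the coloring rather than argue afresh: once the equivalent conditions hold, $\mathcal{G}$ is $loc$-$2$ colorable, hence $q$-$2$ colorable, so the ``moreover'' clause of Proposition \ref{prop:2col=>sym} gives that $\Spec(A)=-\Spec(A)$ holds with multiplicities. The main obstacle I anticipate is not the chaining, since each arrow is essentially a one-line citation, but the bookkeeping of the degenerate boundary cases, namely small $\dim B$ and $d=0$, where ``bipartite'' (a \emph{surjective} homomorphism onto $K_2$) and the $\dim B\geq 2$ proviso in (4) must be reconciled. I would dispose of these by reducing to $\dim B\geq 2$, where connectedness forces $d>0$ and the proviso evaporates, and by inspecting the one-dimensional graphs directly. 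The genuinely deep input is concealed inside $(5)\Leftrightarrow(1)$, which rests on Theorem \ref{thm:hom-lochomequiv_tracial} identifying graph homomorphisms with $loc$-homomorphisms; the corollary is in essence that identification combined with the spectral description of bipartiteness from Theorem \ref{thm:connbipartite}.
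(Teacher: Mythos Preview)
Your proposal is correct and follows essentially the same route as the paper: the cycle $(1)\Rightarrow(2)\Rightarrow(3)\Rightarrow(4)\Rightarrow(5)\Rightarrow(1)$ with each arrow coming from the definition, Proposition~\ref{prop:2col=>sym}, regularity, Theorem~\ref{thm:connbipartite}, and Theorem~\ref{thm:bipartiff2col} respectively, and the multiplicity claim obtained by upgrading from $loc$ to $q$ and invoking the ``moreover'' clause of Proposition~\ref{prop:2col=>sym}. Your treatment of the $d=0$ proviso in $(3)\Rightarrow(4)$ via connectedness forcing $d>0$ when $\dim B\geq 2$ is slightly more explicit than the paper's, but the argument is the same.
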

\begin{proof}
\noindent$((1)\implies(2))$: Obvious by definition.

\noindent$((2)\implies(3))$: The symmetry follows from Proposition \ref{prop:2col=>sym}. 
In particular, if we assume (1), the symmetry holds with multiplicity.

\noindent$((3)\implies(4))$: Since $\mathcal{G}$ is $d$-regular, the symmetry of spectrum shows $-d \in \Spec A$.

\noindent$((4)\implies(5))$: This is shown by Theorem \ref{thm:connbipartite} 
as we assumed that $\mathcal{G}$ is connected.

\noindent$((5)\implies(1))$:
This is the direct consequence of Theorem \ref{thm:bipartiff2col}.
\end{proof}

In particular, this means that all kinds of $t$-$2$ colorability are mutually equivalent for connected regular undirected tracial quantum graphs.

\bibliographystyle{plain}
\bibliography{bunken.bib}

\begin{thebibliography}{10}

\bibitem{Brannan2019bigalois}
Michael Brannan, Alexandru Chirvasitu, Kari Eifler, Samuel Harris, Vern
  Paulsen, Xiaoyu Su, and Mateusz Wasilewski.
\newblock Bigalois extensions and the graph isomorphism game.
\newblock {\em Comm. Math. Phys.}, 375(3):1777--1809, 2020.

\bibitem{Brannan2022quantum}
Michael Brannan, Kari Eifler, Christian Voigt, and Moritz Weber.
\newblock Quantum {C}untz-{K}rieger algebras.
\newblock {\em Trans. Amer. Math. Soc. Ser. B}, 9:782--826, 2022.

\bibitem{BGH2022quantum}
Michael Brannan, Priyanga Ganesan, and Samuel~J. Harris.
\newblock The quantum-to-classical graph homomorphism game.
\newblock {\em J. Math. Phys.}, 63(11):Paper No. 112204, 34, 2022.

\bibitem{Brannan2022complete}
Michael Brannan, Li~Gao, and Marius Junge.
\newblock Complete logarithmic {S}obolev inequalities via {R}icci curvature
  bounded below.
\newblock {\em Adv. Math.}, 394:Paper No. 108129, 60, 2022.

\bibitem{Brown2008}
Nathanial~Patrick Brown and Narutaka Ozawa.
\newblock {\em $C^*$-Algebras and Finite Dimensional Approximations},
  volume~88.
\newblock American Mathematical Soc., 2008.

\bibitem{Chavez2021connectivity}
Javier~Alejandro Ch{\'a}vez-Dom{\'\i}nguez and Andrew~T Swift.
\newblock Connectivity for quantum graphs.
\newblock {\em Linear Algebra and its Applications}, 608:37--53.

\bibitem{Chung1997}
Fan~RK Chung.
\newblock {\em Spectral graph theory}, volume~92.
\newblock American Mathematical Soc., 1997.

\bibitem{Daws2022quantum}
Matthew Daws.
\newblock Quantum graphs: different perspectives, homomorphisms and quantum
  automorphisms.
\newblock {\em arXiv preprint arXiv:2203.08716}, 2022.

\bibitem{Fiedler1973algebraic}
Miroslav Fiedler.
\newblock Algebraic connectivity of graphs.
\newblock {\em Czechoslovak Mathematical Journal}, 23(2):298--305, 1973.

\bibitem{Ganesan2023spectral}
Priyanga Ganesan.
\newblock Spectral bounds for the quantum chromatic number of quantum graphs.
\newblock {\em Linear Algebra Appl.}, 674:351--376, 2023.

\bibitem{Gromada2022some}
Daniel Gromada.
\newblock Some examples of quantum graphs.
\newblock {\em Lett. Math. Phys.}, 112(6):Paper No. 122, 49, 2022.

\bibitem{Hoffman1963polynomial}
A.~J. Hoffman.
\newblock On the polynomial of a graph.
\newblock {\em The American Mathematical Monthly}, 70(1):30--36, 1963.

\bibitem{ManuilovTroitsky2005}
Vladimir~M Manuilov and Evgenij~V Troitsky.
\newblock {\em Hilbert $C^*$-modules}, volume 226.
\newblock American Mathematical Society, 2005.

\bibitem{Matsuda2022classification}
Junichiro Matsuda.
\newblock Classification of quantum graphs on {$M_2$} and their quantum
  automorphism groups.
\newblock {\em Journal of Mathematical Physics}, 63(9):092201, 2022.

\bibitem{Musto2018compositional}
Benjamin Musto, David Reutter, and Dominic Verdon.
\newblock A compositional approach to quantum functions.
\newblock {\em Journal of Mathematical Physics}, 59(8):081706, 2018.

\bibitem{Vicary2011categorical}
Jamie Vicary.
\newblock Categorical formulation of finite-dimensional quantum algebras.
\newblock {\em Communications in Mathematical Physics}, 304(3):765--796, 2011.

\bibitem{Wasilewski2023quantum}
Mateusz Wasilewski.
\newblock On quantum cayley graphs.
\newblock {\em arXiv preprint arXiv:2306.15315}, 2023.

\bibitem{Weaver2012quantum}
Nik Weaver.
\newblock Quantum relations.
\newblock {\em Mem. Amer. Math. Soc.}, 215(1010):v--vi, 81--140, 2012.

\end{thebibliography}

\end{document}